\tikzset{node distance=2cm, auto}
\DeclareMathOperator{\Ima}{Im}
\newtheorem{theorem}{Theorem}[section]
\newtheorem{proposition}[theorem]{Proposition}
\newtheorem{lemma}[theorem]{Lemma}
\newtheorem{corollary}[theorem]{Corollary}
\newtheorem{definition}[theorem]{Definition}
\newtheorem{remark}[theorem]{Remark}
\newtheorem{question}[theorem]{Question}
\title{ON D-SPACES AND COVERING PROPERTIES}
\author{\textbf{Talal Alrawajfeh} \and \textbf{Hasan Z. Hdeib}}
\date{July 2019}
\begin{document}
\maketitle

\addcontentsline{toc}{chapter}{Contents}
\begingroup
\tableofcontents
\endgroup

\clearpage

\addcontentsline{toc}{chapter}{Abstract}
\thispagestyle{plain}
\begin{center}
	\vspace*{3cm}
    {\Large
    \textbf{On D-spaces and Covering Properties}
	}

	\vspace{0.5cm}
	{\normalsize
	By \\
	\textbf{Talal Alrawajfeh}
	}

	\vspace{0.5cm}
	{\normalsize
	Supervisor \\
	\textbf{Hasan Z. Hdeib}
	}

	\vspace{1.5cm}
	{\Large
	\textbf{Abstract} \\
	}
	\vspace{1cm}
	\begin{justify}
	In this thesis, we introduce the subject of D-spaces and some of its most important open problems which are related to well known covering properties.
	We then introduce a new approach for studying D-spaces and covering properties in general.
	We start by defining a topology on the family of all principal ultrafilters of a set $X$ called the principal ultrafilter topology.
	We show that each open neighborhood assignment could be transformed uniquely to a special continuous map using the principal ultrafilter topology.
	We study some structures related to this special continuous map in the category \textbf{Top}, then we obtain a characterization of D-spaces via this map.
	Finally, we prove some results on Lindel\"of, paracompact, and metacompact spaces that are related to the property D\@.
	\end{justify}
\end{center}
\let\cleardoublepage

\clearpage
\mainmatter

\chapter{Introduction}\label{ch:introduction}
\section{History and Background}\label{sec:history-and-background}

\justify
All notation, concepts, and results of any branch of mathematics that are not defined or explicitly stated in this thesis could be found in the following references: General Topology in Engelking (1989)$\,^{\text{\cite{engelking}}}$, Set theory in Jech (2002)$\,^{\text{\cite{jech}}}$, and Category theory in both Mac Lane (1971)$\,^{\text{\cite{maclane}}}$ and Awodey (2010)$\,^{\text{\cite{awodey}}}$.

A similar notion to an open cover of a space is that of an \textit{open neighborhood assignment}.
If $(X, \tau)$ is a topological space, then a function (or an assignment) $N : X \rightarrow \tau$ is an open neighborhood assignment if $N(x)$ contains $x$ for each $x$ in $X$.
In other words, an open neighborhood assignment takes each point $x$ to an open neighborhood of it in $\tau$.
From this definition, it is clear that $N(X) = \{N(x) : x \in X \}$ is an open cover of $X$.

\textit{D-spaces} were first introduced in Van Douwen and Pfeffer (1979)$\,^{\text{\cite{van-douwen-pfeffer}}}$, where some properties of the Sorgenfrey line were discussed.
The Sorgenfrey line is the topology on $\mathbb{R}$ generated from the base $\mathcal{B} = \{ \left[ a, b \right) : a,b \in \mathbb{R} \}$.
One of the main results in the paper is that every finite power of the Sorgenfrey line is a D-space.
A D-space or the \textit{property D} is defined as follows.

\begin{definition}[D-space]
	A topological space $(X, \tau)$ is a D-space (or has the property D) if for every open neighborhood assignment $N : X \rightarrow \tau$, there is a closed discrete set $D$ satisfying $\bigcup N ( D ) = \bigcup\limits_{d \in D} N ( d ) = X$.
\end{definition}


\begin{theorem}\label{finite-pow-sorgenfrey-is-d}
	Every finite power of the Sorgenfrey line is a D-space.
\end{theorem}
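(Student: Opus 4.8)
The plan is to prove the statement by induction on $n$, after a reduction to a bounded cube. Since $\mathbb{R}=\bigcup_{k\in\mathbb{Z}}[k,k+1)$ and each $[k,k+1)$ is clopen in the Sorgenfrey line $S$, the space $S^{n}$ is the topological sum of the clopen cubes $\prod_{i=1}^{n}[k_{i},k_{i}+1)$ over $(k_{1},\dots,k_{n})\in\mathbb{Z}^{n}$, each homeomorphic by a translation to $C^{n}$, where $C=[0,1)$ carries the Sorgenfrey topology. I would first record two standard and elementary facts: a closed subspace of a D-space is a D-space, and a topological sum of clopen D-subspaces is a D-space (for the latter, restrict the given neighbourhood assignment to each summand, solve there, and note that the union of the resulting closed discrete kernels is again closed and discrete because the summands are clopen). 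By the second fact it suffices to show that $C^{n}$ is a D-space for all $n\ge 1$.

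For the base case $n=1$, given an open neighbourhood assignment $N$ on $C$, I would pick for each $x$ a value $f(x)\in(x,1]$ with $[x,f(x))\subseteq N(x)$ and build points $d_{\alpha}$ by transfinite recursion: writing $U_{\alpha}=\bigcup_{\beta<\alpha}[d_{\beta},f(d_{\beta}))$, stop if $U_{\alpha}=C$, and otherwise set $d_{\alpha}=\inf(C\setminus U_{\alpha})$, which belongs to $C\setminus U_{\alpha}$ since a nonempty Sorgenfrey-closed subset of $\mathbb{R}$ that is bounded below contains its infimum. Because $[0,d_{\alpha})\subseteq U_{\alpha}$, the sequence $(d_{\alpha})$ is strictly increasing, so the recursion has countable length (there is no strictly increasing $\omega_{1}$-sequence in $\mathbb{R}$) and terminates with $D=\{d_{\alpha}\}$ satisfying $\bigcup_{d\in D}[d,f(d))=C$, hence $\bigcup N(D)=C$. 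Finally $D$, being well ordered by $<$ as a subset of $\mathbb{R}$, is closed and discrete in $S$: each $d_{\alpha}$ is isolated in $D$ by $[d_{\alpha},d_{\alpha+1})$ (or $[d_{\alpha},1)$ if $d_{\alpha}$ is the last point), and if some $p\notin D$ had every $[p,p+\varepsilon)$ meeting $D$, then $D\cap(p,\infty)$ would be a nonempty subset of the well-ordered set $D$, hence have a least element $m$, and then $[p,m)$ would miss $D$, a contradiction. Thus $C$ is a D-space.

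For the inductive step I would assume $C^{n-1}$, and hence every closed subspace of it, is a D-space, and, given an open neighbourhood assignment $N$ on $C^{n}$, fix for each $x$ a side length $\varepsilon(x)>0$ with $B(x):=\prod_{i=1}^{n}[x_{i},x_{i}+\varepsilon(x))\subseteq N(x)$. The idea is to sweep in the last coordinate, recursively building an increasing chain of open sets $V_{\gamma}$ together with closed discrete sets $D_{\gamma}$: with $R_{\gamma}=C^{n}\setminus V_{\gamma}$ (Sorgenfrey closed) and $a_{\gamma}=\inf\{t:(x',t)\in R_{\gamma}\text{ for some }x'\}$, whenever the frontal slice $F_{\gamma}=R_{\gamma}\cap(C^{n-1}\times\{a_{\gamma}\})$ is nonempty one applies the inductive hypothesis to the closed subspace $F_{\gamma}$ of $C^{n-1}\times\{a_{\gamma}\}\cong C^{n-1}$ under the neighbourhood assignment $x'\mapsto B((x',a_{\gamma}))\cap(C^{n-1}\times\{a_{\gamma}\})$ restricted to $F_{\gamma}$, producing $D_{\gamma}\subseteq F_{\gamma}$ closed discrete whose cubes cover $F_{\gamma}$, and then puts $V_{\gamma+1}=V_{\gamma}\cup\bigcup_{d\in D_{\gamma}}B(d)$. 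The kernel to be produced is $D=\bigcup_{\gamma}D_{\gamma}$, and one wants $\bigcup N(D)=C^{n}$ with $D$ closed and discrete.

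The hard part --- and the technical heart of the argument of van Douwen and Pfeffer --- is to make this sweep terminate and to control $D$. The obstruction is that the cubes $B(d)$ used over a frontal slice may have arbitrarily small side lengths, so even after $F_{\gamma}$ has been covered the set $R_{\gamma+1}$ can still contain points whose last coordinate is arbitrarily close to $a_{\gamma}$ from above: the uncovered region creeps up to a hyperplane without meeting it, so $F_{\gamma+1}=\emptyset$, $a_{\gamma+1}=a_{\gamma}$, and there is no longer a least uncovered point at which to continue. Overcoming this requires finer bookkeeping --- peeling off these thin uncovered slivers by a controlled secondary transfinite recursion, or first stratifying the cubes by (dyadic) side length so that the set of last coordinates occurring over the uncovered region stays well ordered --- after which one checks that the resulting $D$ inherits closedness and discreteness from the well-ordering driving the sweep together with the disjointness forced among the cubes $B(d)$. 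Granting this, the induction closes and $C^{n}$, and hence $S^{n}$, is a D-space.
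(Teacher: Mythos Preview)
Your reduction to the cube $C^{n}=[0,1)^{n}$ and your base case $n=1$ are fine, but the overall route is not the one in the paper and, more importantly, your inductive step is not a proof yet.

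The paper (following van Douwen and Pfeffer) does \emph{not} argue by induction on $n$ with a coordinate sweep. It factors through the notion of a GLS-space: one defines a reflexive relation $\preceq$ on $S^{n}$ (think of coordinatewise $\le$), shows that every nonempty Sorgenfrey-closed subset of $S^{n}$ has a $\preceq$-minimal element and that each $\Uparrow x$ is open, and then applies the general theorem that every GLS-space is a D-space. The point is that the single transfinite recursion inside ``GLS $\Rightarrow$ D'' already handles all dimensions at once; no dimension-by-dimension sweep is needed, and the obstruction you describe never arises.

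In your approach the gap is exactly where you say it is. After you cover the frontal slice $F_{\gamma}$ with cubes $B(d)$, nothing prevents $\inf_{d\in D_{\gamma}}\varepsilon(d)=0$, so $R_{\gamma+1}$ can contain points with last coordinate arbitrarily close to $a_{\gamma}$; then $a_{\gamma+1}=a_{\gamma}$ while $F_{\gamma+1}=\emptyset$, and the recursion stalls. You acknowledge this and gesture at a fix (``secondary transfinite recursion'', ``stratifying by dyadic side length''), but you do not actually carry one out, nor do you verify that the union $D=\bigcup_{\gamma}D_{\gamma}$ is closed discrete once the process is modified. As written, the inductive step is a plausible plan, not a proof; the phrase ``Granting this, the induction closes'' hides precisely the part that needs to be argued. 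Incidentally, the sentence attributing this sweep to ``the technical heart of the argument of van Douwen and Pfeffer'' is inaccurate: their technical heart is the GLS machinery, which avoids the stalling phenomenon entirely by always choosing a $\preceq$-minimal point of the remaining closed set rather than trying to advance a coordinate.
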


To prove theorem~\ref{finite-pow-sorgenfrey-is-d}, the authors first introduced the notion of a \textit{GLS-space} which is a generalization of the notion of a \textit{left-separated} space.

\begin{definition}[Left-separated space]
	A topological space $(X, \tau)$ is left-separated if there is a well-order $\preceq$ on $X$ such that $\{ y \in X: x \preceq y\}$ is open in $X$ for each $x$ in $X$.
\end{definition}

We will denote the set $\{ y \in X: x \preceq y\}$ as $\Uparrow x$.
Moreover, for any reflexive relation $\preceq$ on a set $E$ and $F \subseteq E$, a point $m \in F$ that satisfies $x \preceq m \Longrightarrow x = m$ for all $x \in F$ is called a $\preceq$-minimal element of $F$.
Hence, this captures the intuitive concept of the smallest element of a subset of $E$ with respect to the relation $\preceq$.

\begin{definition}[GLS-space]
	A topological space $(X, \tau)$ is called a generalized left-separated space (abbreviated GLS-space) if there is a reflexive binary relation $\preceq$, called a GLS-relation, such that
	\begin{enumerate}
		\item every non-empty closed subset has a $\preceq$-minimal element
		\item $\Uparrow x$ is open for each $x \in X$
	\end{enumerate}
\end{definition}

The authors then proved the following theorem.

\begin{theorem}\label{gls-is-d}
	Every GLS-space is a D-space.
\end{theorem}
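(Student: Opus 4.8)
The plan is to argue directly from the definition of a D-space by a transfinite recursion that, given an open neighborhood assignment $N : X \to \tau$, produces the required closed discrete set $D$ out of a careful choice of $\preceq$-minimal points. Fix $N$ and recursively choose points $d_\alpha \in X$ as follows: having chosen $d_\beta$ for all $\beta < \alpha$, put $U_\alpha = \bigcup_{\beta < \alpha} N(d_\beta)$; if $U_\alpha = X$ the recursion halts, and otherwise $C_\alpha = X \setminus U_\alpha$ is nonempty and closed, so axiom~(1) of a GLS-space lets me pick $d_\alpha$ to be a $\preceq$-minimal element of $C_\alpha$. Since $d_\beta \in N(d_\beta) \subseteq U_\alpha$ for each $\beta < \alpha$ while $d_\alpha \notin U_\alpha$, the points $d_\alpha$ are pairwise distinct, so the recursion must stop at some ordinal $\lambda$ of cardinality at most $|X|$; set $D = \{ d_\alpha : \alpha < \lambda \}$. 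By the stopping condition $\bigcup N(D) = U_\lambda = X$, so it remains only to check that $D$ is closed and discrete.

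The verification rests on two monotonicity observations, both immediate from $U_\beta \subseteq U_\gamma$ whenever $\beta < \gamma$: first, for $\beta < \gamma < \lambda$ one has $d_\gamma \in C_\gamma \subseteq X \setminus N(d_\beta)$, so $d_\gamma \notin N(d_\beta)$; second, $C_\gamma \subseteq C_\beta$, so the $\preceq$-minimality of $d_\beta$ inside $C_\beta$ gives the key implication that $x \in C_\beta$ together with $x \preceq d_\beta$ forces $x = d_\beta$ (in particular $d_\gamma \preceq d_\beta$ cannot hold for $\beta < \gamma$). For discreteness, I take the neighborhood $N(d_\alpha) \cap \Uparrow d_\alpha$ of $d_\alpha$, which is open by axiom~(2); it omits every $d_\beta$ with $\beta > \alpha$ by the first observation, and every $d_\beta$ with $\beta < \alpha$ because $d_\beta \in \Uparrow d_\alpha$ would say $d_\alpha \preceq d_\beta$ while $d_\alpha \in C_\alpha \subseteq C_\beta$, contradicting the key implication. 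For closedness, given $x \notin D$ I let $\alpha$ be least with $x \in N(d_\alpha)$ (possible since $\bigcup N(D) = X$), so that $x \in C_\alpha$, and I consider the open set $N(d_\alpha) \cap \Uparrow x$; any $d_\beta$ lying in it satisfies $d_\beta \in N(d_\alpha)$, which forces $\beta \le \alpha$ by the first observation, together with $x \preceq d_\beta$, and then $x \in C_\alpha \subseteq C_\beta$ combined with the key implication yields $x = d_\beta \in D$, a contradiction. Thus $X \setminus D$ is open, $D$ is closed and discrete with $\bigcup N(D) = X$, and $X$ is a D-space.

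The genuine difficulty is that a GLS-relation is only assumed reflexive, so neither chaining nor antisymmetry is available: the sole leverage is that each $d_\beta$ is $\preceq$-minimal in its own $C_\beta$, and this can only be used against a point that actually lies in $C_\beta$. The construction is therefore rigged so that all later choices $d_\gamma$, and also any point $x$ still uncovered at stage $\alpha$, remain inside every earlier $C_\beta$; the point I would be most careful to get right is precisely this index bookkeeping (who is covered at which stage, and in which $C_\beta$ a given point sits), together with the observation that intersecting the covering set $N(d_\alpha)$ with the appropriate $\Uparrow$-set is exactly what simultaneously delivers discreteness and closedness.
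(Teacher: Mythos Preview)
Your proof is correct and follows precisely the approach the paper sketches: the paper only states that ``the proof involves constructing, for each open neighborhood assignment $N:X\rightarrow \tau$, a closed discrete set $D$ that satisfies $\bigcup N(D) = X$ via transfinite recursion,'' and your argument supplies exactly this construction together with the careful index bookkeeping needed to verify closedness and discreteness using only reflexivity of $\preceq$ and the $\preceq$-minimality in each $C_\beta$.
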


The proof involves constructing, for each open neighborhood assignment $N:X\rightarrow \tau$, a closed discrete set $D$ that satisfies $\bigcup N(D) = X$ via transfinite recursion.
Theorem~\ref{gls-is-d} has a central role in proving theorem~\ref{finite-pow-sorgenfrey-is-d} because it turns out that every finite power of the Sorgenfrey line is, in fact, a GLS-space as stated in the following theorem.

\begin{theorem}\label{sorgenfrey-is-gls}
	Every finite power of the Sorgenfrey line is a GLS-space.
\end{theorem}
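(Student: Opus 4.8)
The plan is to prove the statement by exhibiting, for each $n\geq 1$, an explicit GLS-relation on the $n$-th power $S^n$ of the Sorgenfrey line (the case $n=0$ is a one-point space and is trivial). The point is that $S^n$ looks locally like $\mathbb{R}^n$ with its usual topology inside each half-open unit cube, so a coordinatewise order nearly works; it fails globally (the diagonal $\{(t,\dots,t):t\in\mathbb{R}\}$ is closed in $S^n$ but has no coordinatewise-minimal element), and the remedy is to chop $\mathbb{R}^n$ into unit cubes and well-order the cubes. Concretely, for $v\in\mathbb{Z}^n$ set $\mathrm{cell}(v)=\prod_{i=1}^n[v_i,v_i+1)$; the cells partition $\mathbb{R}^n$ and each is clopen in $S^n$. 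Fix a well-order $\ll$ of $\mathbb{Z}^n$, write $v(z)=(\lfloor z_1\rfloor,\dots,\lfloor z_n\rfloor)$, and declare $x\preceq y$ to hold iff either $v(x)=v(y)$ and $x_i\leq y_i$ for all $i$, or $v(x)\neq v(y)$ and $v(x)\ll v(y)$. This relation is reflexive, and I claim it is a GLS-relation.

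First I would check condition~(2), that $\Uparrow x$ is open for every $x$. A direct computation gives
\[
\Uparrow x \;=\; \Big(\prod_{i=1}^n [x_i,\lfloor x_i\rfloor+1)\Big)\;\cup\;\bigcup\{\mathrm{cell}(w):w\gg v(x)\},
\]
and the right-hand side is open in $S^n$: the first term is a basic open box (each factor is a genuine half-open interval, since $x_i<\lfloor x_i\rfloor+1$) and the second term is a union of clopen cells.

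The substance is condition~(1): every nonempty closed set $C\subseteq S^n$ has a $\preceq$-minimal element. First I would reduce to a single cell. The set $T=\{v\in\mathbb{Z}^n:C\cap\mathrm{cell}(v)\neq\varnothing\}$ is nonempty, so it has a $\ll$-least element $v^{\ast}$; then $C'=C\cap\mathrm{cell}(v^{\ast})$ is nonempty, closed in $S^n$, and bounded. If $C'$ has a coordinatewise-minimal element $m$, then $m$ is $\preceq$-minimal in $C$: for if $x\in C$ with $x\preceq m$ and $x\neq m$, then by the definition of $\preceq$ either $v(x)=v^{\ast}$ and $x\leq m$ coordinatewise, so $x\in C'$ and $m$ was not minimal there, or $v(x)\ll v^{\ast}$ with $v(x)\in T$, contradicting the choice of $v^{\ast}$. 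So everything reduces to the lemma: a nonempty, bounded, $S^n$-closed set $B$ has a coordinatewise-minimal element. I would prove this by Zorn's lemma applied to the reverse of the coordinatewise order on $B$, so it suffices to show every coordinatewise chain $\mathcal{K}\subseteq B$ has a coordinatewise lower bound lying in $B$. Put $\ell=(\ell_1,\dots,\ell_n)$ with $\ell_i=\inf\{x_i:x\in\mathcal{K}\}$, which is finite by boundedness; then $\ell\leq x$ coordinatewise for every $x\in\mathcal{K}$, and it only remains to see that $\ell\in B$. For that, pick a countable subchain $\mathcal{K}'=\{y^{(0)},y^{(1)},\dots\}\subseteq\mathcal{K}$ whose coordinatewise infima are still $\ell_1,\dots,\ell_n$, and let $z^{(k)}$ be the coordinatewise least of $y^{(0)},\dots,y^{(k)}$, which is one of them and hence lies in $B$. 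Then $z^{(k)}$ is coordinatewise decreasing with $z^{(k)}_i\downarrow\ell_i$ for each $i$, and because a decreasing real sequence that stays above its limit converges to that limit in the Sorgenfrey line, $z^{(k)}\to\ell$ in $S^n$; as $B$ is closed, $\ell\in B$, completing the lemma and the proof.

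The step I expect to be the obstacle is this lemma. The tempting shortcut is to order each cell by a convenient total order, but a bounded $S^n$-closed set need not have a least element for the lexicographic order — for instance $\{(1/k,\,1-1/k):k\geq 2\}$ is closed in $S^2$ yet has no lexicographically least element — so one is forced to work with the merely partial coordinatewise order and to run a chain-completeness (Zorn) argument. Within that argument, the one genuinely delicate point is verifying that the coordinatewise infimum of a chain is an honest Sorgenfrey limit of points of the chain, so that closedness of $B$ may be applied; this is exactly where the one-sided, half-open character of the Sorgenfrey topology is used. Everything else is routine bookkeeping with the relation $\preceq$.
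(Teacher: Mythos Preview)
The paper does not actually prove this theorem: it appears in the historical survey of Chapter~1 as a result quoted from Van Douwen and Pfeffer (1979), with no argument supplied. So there is no proof in the paper to compare against.

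That said, your argument is correct and is essentially the standard one. The cell decomposition by integer-lattice cubes, the lexicographic-style mix of a well-order on cells with the coordinatewise order inside a cell, and the verification that $\Uparrow x$ is open are all fine. The key lemma---that a nonempty bounded $S^n$-closed set has a coordinatewise-minimal element---is handled correctly: your Zorn argument works because a coordinatewise-decreasing sequence in $S^n$ converges in the Sorgenfrey topology to its coordinatewise infimum, and your reduction to a countable cofinal subchain is legitimate since each coordinate infimum is realized along a countable subset and the union of these finitely many countable subsets is still a chain. Your cautionary example about the lexicographic order is apt and shows you have identified the genuine subtlety.
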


To go on further, we will need to define the following notions first.

\begin{definition}
	A collection of subsets of a topological space $(X, \tau)$ is locally finite if each $x$ in $X$ has an open neighborhood that intersects at most finitely many sets in that collection.
\end{definition}

\begin{definition}
	A collection of subsets of a topological space $(X, \tau)$ is point finite if each $x$ in $X$ is contained in at most finitely many sets in that collection.
\end{definition}

\begin{definition}
	A topological space $(X, \tau)$ is Lindel\"of if every open cover reduces to a countable subcover.
	We do not require the space $X$ to be regular opposed to Engelking (1989)$\,^{\text{\cite{engelking}}}$.
\end{definition}

\begin{definition}
	A topological space $(X, \tau)$ is paracompact if every open cover has a locally finite open refinement.
	We do not require the space $X$ to be Hausdorff.
\end{definition}

\begin{definition}
	A topological $(X, \tau)$ is metacompact if every open cover has a point finite open refinement.
	We do not require the space $X$ to be Hausdorff.
\end{definition}

The authors of the paper also tried but failed to find a space with a covering property as least as strong as metacompactness or subparacompactness which is not a D-space.
The existence of a regular space that is Lindel\"of, paracompact, metacompact, meta-Lindel\"of, or subparacompact and which is not a D-space still remains unknown to this day even if the covering property was introduced hereditarily (see Gruenhage, 2011)$\,^{\text{\cite{gruenhage}}}$.

Some results were obtained on D-spaces after Van Douwen and Pfeffer in Borges and Wehrly (1991)$\,^{\text{\cite{borges-wehrly}}}$.
They proved the following theorems on D-spaces.

\begin{theorem}\label{semi-start-is-d}
	Semi-stratifiable spaces are D-spaces.
\end{theorem}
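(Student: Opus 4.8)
The plan is to work with a \emph{semi-stratification} of $X$ in its $g$-function form: a map $g\colon\omega\times X\to\tau$ such that $x\in g(n,x)$ and $g(n+1,x)\subseteq g(n,x)$ for all $n$ and $x$, and such that every closed set $F$ satisfies $F=\bigcap_{n\in\omega}\bigcup_{x\in F}g(n,x)$; equivalently, for each $y\notin F$ there is an $n$ with $y\notin g(n,x)$ for all $x\in F$. (Such a $g$ comes from the monotone closed-set assignment in the definition of a semi-stratifiable space.) Fix an open neighborhood assignment $N\colon X\to\tau$. The first move is to shrink: replacing $g(n,x)$ by $g(n,x)\cap N(x)$ preserves monotonicity in $n$ and preserves the displayed identity for closed $F$ (since $\bigcap_{n}\bigcup_{x\in F}(g(n,x)\cap N(x))$ is trapped between $F$ and $\bigcap_{n}\bigcup_{x\in F}g(n,x)=F$), so we may assume $g(n,x)\subseteq N(x)$ for all $n,x$. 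It is then enough to build a closed discrete $D\subseteq X$ together with a choice of $n_{d}\in\omega$ for $d\in D$ with $\bigcup_{d\in D}g(n_{d},d)=X$, for then $\bigcup_{d\in D}N(d)=X$.

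The construction is a recursion that strips $X$ in $\omega$ layers and, should a non-empty closed residual remain, restarts on it — so in all a transfinite recursion along a strictly decreasing chain of closed sets. Set $C^{0}=X$. At a successor $\alpha+1$, given the closed set $C^{\alpha}$, run an inner $\omega$-recursion: put $E_{0}=C^{\alpha}$, and with $E_{n}$ closed use Zorn's lemma to choose $D^{\alpha}_{n}\subseteq E_{n}$ maximal subject to the condition that $d\notin g(n,d')$ for all distinct $d,d'\in D^{\alpha}_{n}$ (unions of chains of such sets are again such, so Zorn applies); then set $E_{n+1}=E_{n}\setminus\bigcup_{d\in D^{\alpha}_{n}}g(n,d)$, which is closed, and finally $C^{\alpha+1}=\bigcap_{n}E_{n}$. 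At a limit $\lambda$ put $C^{\lambda}=\bigcap_{\alpha<\lambda}C^{\alpha}$. Whenever $C^{\alpha}\neq\emptyset$ the set $D^{\alpha}_{0}$ is non-empty (a singleton trivially satisfies the condition, so maximality forbids $D^{\alpha}_{0}=\emptyset$), and it removes its own points from $E_{1}$, so $C^{\alpha+1}\subsetneq C^{\alpha}$; hence the chain is strictly decreasing and reaches $\emptyset$ at some ordinal $\theta$. Let $D=\bigcup_{\alpha<\theta}\bigcup_{n\in\omega}D^{\alpha}_{n}$.

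The covering part is then immediate: given $x\in X$, let $\gamma$ be least with $x\notin C^{\gamma}$; since $C^{\lambda}$ is an intersection at limit stages, $\gamma$ is a successor $\alpha+1$, so $x\in C^{\alpha}$ leaves $E_{m}$ at some least $m\geq 1$ of the inner recursion, i.e. $x\in g(m-1,d)$ for some $d\in D^{\alpha}_{m-1}\subseteq D$; taking $n_{d}=m-1$ gives $\bigcup_{d\in D}g(n_{d},d)=X$ (and along the way one checks the $D^{\alpha}_{n}$ are pairwise disjoint, using that $g(n,d)\ni d$ is deleted from all later $E_{k}$, so $n_{d}$ is well defined). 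What remains — and this is the heart of the matter — is that $D$ is closed and discrete. The construction gives us for free that $g(n,d)$ meets $D^{\alpha}_{n}$ only in $d$ and is disjoint from every $D^{\alpha'}_{n'}$ lexicographically above $(\alpha,n)$; the genuine obstacle is the points of $D$ chosen at \emph{earlier} stages, which a priori could cluster at a given $x$, since in a semi-stratifiable (as opposed to stratifiable) space the sets $g(n,x)$ need not shrink into an arbitrary neighborhood of $x$.

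The hard part, therefore, is exactly this reconciliation of the top-down layering with the bottom-up accumulation of selected points. This is where the semi-stratification axiom $F=\bigcap_{n}\bigcup_{x\in F}g(n,x)$ has to be pressed into service — applied to the closure of the offending portion of $D$ to force that no point of $X$ is a limit point of $D$ — and making it bite presumably requires choosing each $D^{\alpha}_{n}$ a little more carefully than merely ``$g_{n}$-separated'' (for instance so that $C^{\alpha+1}$ already quantitatively separates $D^{\alpha}$ from all later selections, or by an extra diagonal bookkeeping over the earlier stages). I expect the recursion and the covering property to be routine and this closed-discreteness verification to be the only real difficulty; in particular, consistently with the open problems quoted above, there is no shortcut through a pure covering property, so the semi-stratification structure must be exploited directly.
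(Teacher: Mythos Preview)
The paper does not actually prove this theorem; it merely cites Borges and Wehrly (1991) and remarks in one sentence that their argument ``involves applying transfinite recursion to construct a closed discrete set $D$ for every open neighborhood assignment.'' So there is very little to compare against beyond the bare fact that a transfinite recursion is used, and your outline is certainly consistent with that description.

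That said, your proposal is not a proof: you yourself flag the gap. The covering property and the separation of each $D^{\alpha}_{n}$ from itself and from lexicographically later selections are fine, but you explicitly leave open why points chosen at \emph{earlier} stages cannot accumulate at a later $x$, and you concede that closing this gap ``presumably requires choosing each $D^{\alpha}_{n}$ a little more carefully'' or adding ``extra diagonal bookkeeping.'' That is not a detail to be filled in --- it is the entire content of the theorem. With your construction as written, nothing prevents, say, a sequence $d_{k}\in D^{0}_{k}$ from converging to a point $x\in C^{1}$: the sets $g(k,d_{k})$ that witness the separation of $d_{k}$ within its own level have shrinking radii as $k$ grows and give no uniform neighborhood of $x$ missing them. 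The semi-stratification identity $F=\bigcap_{n}\bigcup_{y\in F}g(n,y)$ does not by itself rescue this, because it controls which points lie in $\bigcup_{y\in F}g(n,y)$, not which $g(n,y)$ a given point lies in.

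In the original Borges--Wehrly argument the recursion is organized differently so that the closed-discreteness is built in rather than verified after the fact: one does not layer an $\omega$-recursion inside each transfinite step, but instead lets the index $n$ be determined by the point (via the semi-stratification) in a single transfinite pass, so that the neighborhood witnessing discreteness at $d$ already excludes everything chosen before $d$. Your two-level scheme can be made to work, but only after the modification you allude to; as it stands the proposal identifies the right ingredients and the right difficulty without resolving it.
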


\begin{theorem}
	Perfect inverse images of D-spaces are D-spaces.
\end{theorem}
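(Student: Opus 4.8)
The plan is to turn an open neighborhood assignment on $X$ into one on $Y$ using the compactness of the fibres, to apply the D-property of $Y$ to the latter, and then to pull the resulting closed discrete set back through $f$ in a controlled way. Throughout, let $f : X \rightarrow Y$ be perfect, i.e.\ a continuous closed surjection all of whose fibres $f^{-1}(y)$ are compact, and assume $Y$ is a D-space.

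Fix an open neighborhood assignment $N : X \rightarrow \tau_X$. First I would fix $y \in Y$ and cover the compact fibre $f^{-1}(y)$ by the family $\{N(x) : x \in f^{-1}(y)\}$, extracting a finite $F_y \subseteq f^{-1}(y)$ with $f^{-1}(y) \subseteq U_y := \bigcup_{x \in F_y} N(x)$. Since $f$ is closed and $f^{-1}(y) \subseteq U_y$, the set $V_y := Y \setminus f(X \setminus U_y)$ is open, contains $y$ (because $f^{-1}(y) \cap (X \setminus U_y) = \emptyset$), and satisfies $f^{-1}(V_y) \subseteq U_y$; this is the ``tube lemma'' for closed maps. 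Thus $M : Y \rightarrow \tau_Y$, $M(y) := V_y$, is an open neighborhood assignment on $Y$. Because $Y$ is a D-space, there is a closed discrete set $E \subseteq Y$ with $\bigcup_{y \in E} V_y = Y$. I then claim that $D := \bigcup_{y \in E} F_y$ witnesses the D-property for $N$. The covering condition is immediate: $\bigcup_{d \in D} N(d) = \bigcup_{y \in E} U_y \supseteq \bigcup_{y \in E} f^{-1}(V_y) = f^{-1}\bigl(\bigcup_{y \in E} V_y\bigr) = f^{-1}(Y) = X$.

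The substantive point that remains is that $D$ is closed and discrete in $X$. Here I would first note that $f^{-1}(E)$ is closed in $X$ by continuity, and then use that $E$ is discrete to decompose $f^{-1}(E)$ as a topological sum of the fibres over $E$: for each $y \in E$ choose $W_y$ open in $Y$ with $W_y \cap E = \{y\}$, so that $f^{-1}(y) = f^{-1}(W_y) \cap f^{-1}(E)$ is open in the subspace $f^{-1}(E)$, and hence (its complement in $f^{-1}(E)$ being a union of such sets) clopen there; consequently $f^{-1}(E) = \bigsqcup_{y \in E} f^{-1}(y)$ as topological spaces. Since $D \subseteq f^{-1}(E)$ meets the summand $f^{-1}(y)$ in exactly the finite set $F_y$ — which, the ambient spaces being $T_1$, is closed and discrete in that fibre — the set $D$ is closed and discrete in $f^{-1}(E)$, and therefore, $f^{-1}(E)$ being closed in $X$, closed and discrete in $X$.

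I expect this last step to be the main obstacle: the compactness argument and the closed-map tube lemma are routine, but one has to extract the topological sum decomposition correctly from the mere fact that $E$ is closed and discrete (rather than open), and then carefully transport closedness and discreteness from $f^{-1}(E)$ back up to $X$. Everything else is bookkeeping, and the surjectivity and closedness of $f$ enter only through the tube lemma and through $f^{-1}(Y) = X$.
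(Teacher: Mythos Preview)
Your argument is correct and is essentially the standard proof of this result (compactness of fibres to get finite $F_y$, the closed-map tube lemma to push down to an assignment on $Y$, then pull the closed discrete kernel back and use the topological-sum decomposition of $f^{-1}(E)$). Note, however, that the paper does not supply its own proof here: the theorem is simply quoted from Borges and Wehrly (1991) as background, so there is nothing in the paper to compare your approach against. Your proof matches the original Borges--Wehrly argument in spirit.

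One small remark on your invocation of $T_1$: the paper's Definition~1.1.1 of a D-space carries no separation hypothesis, and the theorem as stated in the paper does not mention $T_1$ either. In the Borges--Wehrly setting $T_1$ is ambient, and without it the step ``$F_y$ finite $\Rightarrow$ $F_y$ closed discrete in $f^{-1}(y)$'' would indeed fail, so your assumption is the right one to make explicit; just be aware that it is an added hypothesis relative to the bare statement quoted in this paper.
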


\begin{theorem}
	Closed images of D-spaces are D-spaces.
\end{theorem}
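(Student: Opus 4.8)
The plan is to push an arbitrary open neighbourhood assignment on $Y$ back to one on $X$, apply the D-space property in $X$, and then carry the resulting closed discrete set forward along $f$. So let $f:X\rightarrow Y$ be a continuous closed surjection with $X$ a D-space, and let $N:Y\rightarrow\tau_Y$ be an open neighbourhood assignment. First I would define $M:X\rightarrow\tau_X$ by $M(x)=f^{-1}\bigl(N(f(x))\bigr)$. Each $M(x)$ is open by continuity of $f$, and $x\in M(x)$ because $f(x)\in N(f(x))$, so $M$ is an open neighbourhood assignment on $X$. Since $X$ is a D-space there is a closed discrete set $E\subseteq X$ with $\bigcup_{e\in E}M(e)=X$.

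The candidate for the image space is $D:=f(E)$, and two things must be verified: that $D$ covers $Y$ through $N$, and that $D$ is closed and discrete in $Y$. The covering statement is direct: given $y\in Y$, use surjectivity to pick $x\in f^{-1}(y)$, then pick $e\in E$ with $x\in M(e)=f^{-1}(N(f(e)))$; this gives $y=f(x)\in N(f(e))$ with $f(e)\in D$, so $\bigcup_{d\in D}N(d)=Y$. Closedness of $D$ is immediate since $f$ is a closed map and $E$ is closed in $X$.

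For discreteness I would first record the elementary fact that every subset of a closed discrete set is closed: if $A\subseteq E$ then $\overline{A}\subseteq\overline{E}=E$, and $A$ is closed in the discrete subspace $E$, so $\overline{A}=\overline{A}\cap E=A$. Now fix $y\in D$ and put $U=Y\setminus f\bigl(E\setminus f^{-1}(y)\bigr)$. Since $E\setminus f^{-1}(y)\subseteq E$ it is closed in $X$, hence $f\bigl(E\setminus f^{-1}(y)\bigr)$ is closed in $Y$ and $U$ is open. A short check shows $y\in U$ (otherwise some $e\in E\setminus f^{-1}(y)$ has $f(e)=y$, contradicting $e\notin f^{-1}(y)$) and $U\cap D=\{y\}$ (any $z\in U\cap D$ is $f(e)$ for some $e\in E$, and $z\ne y$ would force $e\in E\setminus f^{-1}(y)$, hence $z\in f(E\setminus f^{-1}(y))$, contradicting $z\in U$). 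Thus each point of $D$ is isolated in $D$, so $D$ is discrete. Hence $D$ is a closed discrete subset of $Y$ with $\bigcup N(D)=Y$, and $Y$ is a D-space.

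The only point with any real content is the discreteness of $D=f(E)$: it amounts to the observation that a continuous closed image of a discrete space is again discrete, and the neighbourhood $U$ constructed above is precisely the witness for that fact in the case at hand. Everything else — the pullback of $N$, the covering property, and closedness of $D$ — is routine, so I expect that step to be the main (and essentially the sole) obstacle.
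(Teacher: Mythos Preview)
Your proof is correct and is essentially the standard argument for this result. Note, however, that the paper does not actually supply a proof of this theorem: it is merely quoted in the historical survey of Chapter~1 as a result of Borges and Wehrly (1991), with no argument given. So there is nothing in the paper to compare your approach against.

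One minor remark on your closing comment: the slogan ``a continuous closed image of a discrete space is discrete'' is slightly imprecise as stated, since the map $f$ you are using is closed as a map $X\to Y$, not a priori as a map $E\to Y$. What makes your argument go through is precisely that $E$ is closed in $X$, so every subset of $E$ is closed in $X$ (as you correctly recorded), and hence $f$ restricted to $E$ is still a closed map. With that caveat the discreteness step is exactly as you describe, and the rest is routine.
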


\begin{theorem}\label{mon-normal-d-are-paracompact}
	Monotonically normal D-spaces are paracompact.
\end{theorem}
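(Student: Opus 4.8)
The plan is to combine the structure theory of monotonically normal spaces with the poor covering behavior of stationary sets. The main external ingredient I would quote is the characterization of paracompactness within the class of monotonically normal spaces, due to Balogh and Rudin (generalizing a theorem of Engelking and Lutzer for linearly ordered spaces): \emph{a monotonically normal space is paracompact if and only if it contains no closed subspace homeomorphic to a stationary subset of a regular uncountable cardinal, equipped with the order topology}. Granting this, the argument splits into two elementary lemmas: (i) every closed subspace of a D-space is a D-space; and (ii) if $\kappa$ is a regular uncountable cardinal and $S \subseteq \kappa$ is stationary, then $S$ with the order topology is not a D-space. The theorem then follows at once: if a monotonically normal D-space $X$ were not paracompact, the characterization would give a closed $F \subseteq X$ homeomorphic to such a stationary set; by (i) the set $F$ would be a D-space, hence so would the stationary set, contradicting (ii).

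For (i) I would argue directly with open neighborhood assignments. Let $F$ be closed in a D-space $X$ and let $N : F \rightarrow \tau_F$ be an open neighborhood assignment on $F$. For each $x \in F$ pick open $U_x \subseteq X$ with $U_x \cap F = N(x)$, and define $M : X \rightarrow \tau$ by $M(x) = U_x$ for $x \in F$ and $M(x) = X \setminus F$ for $x \notin F$. Then $M$ is an open neighborhood assignment on $X$, so there is a closed discrete $D \subseteq X$ with $\bigcup M(D) = X$. Now $D \cap F$ is closed discrete in $F$, and if a point of $F$ lies in $M(d)$ then necessarily $d \in F$ (otherwise $M(d) = X \setminus F$ misses $F$), so it lies in $U_d \cap F = N(d)$; hence $\bigcup_{d \in D \cap F} N(d) = F$. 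This step is routine.

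For (ii) the heart of the matter is that every closed discrete subset of a stationary set is bounded. Suppose $D \subseteq S$ is closed in $S$ and discrete, yet unbounded in $\kappa$. Since $D$ is unbounded and $\kappa$ is regular uncountable, the set $D'$ of limit points of $D$ in $\kappa$ is a club. Pick $\delta \in S \cap D'$, which is possible as $S$ is stationary; then $\delta$ lies in the closure of $D$ taken in $\kappa$, and since $\delta \in S$ and $D$ is closed in $S$ we get $\delta \in D$. But $\delta$ is then a point of $D$ that is a limit point of $D$, hence not isolated in $D$ --- contradicting discreteness. So every closed discrete $D \subseteq S$ satisfies $\sup D < \kappa$. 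Now define an open neighborhood assignment $N$ on $S$ by $N(0) = \{0\}$ if $0 \in S$, and $N(\alpha) = (\beta_\alpha, \alpha] \cap S$ for $\alpha \in S$ with $\alpha > 0$, where $\beta_\alpha < \alpha$ is chosen arbitrarily; in every case $N(\alpha) \subseteq [0, \alpha]$. For any closed discrete $D \subseteq S$ we obtain $\bigcup_{d \in D} N(d) \subseteq [0, \sup D] \cap S$, a proper subset of $S$ since $S$ is unbounded in $\kappa$. Hence $S$ is not a D-space.

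The main obstacle is the quoted characterization: a self-contained proof that a non-paracompact monotonically normal space must carry a closed copy of a stationary subset of some regular uncountable cardinal is genuinely substantial, and essentially all of the difficulty of the theorem is concentrated there --- the two lemmas above are elementary by comparison. A more hands-on alternative would be to build a locally finite open refinement of a given open cover directly, using a monotone normality operator to shrink the cover and the D-space property to extract a closed discrete set along which the shrunken sets already cover; but making such a refinement \emph{locally finite}, rather than merely point-finite or a cover, seems to require the same combinatorics of clubs and stationary sets, so I expect the characterization route to stay the cleanest.
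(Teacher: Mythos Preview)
Your argument is sound: the Balogh--Rudin characterization of paracompactness for monotonically normal spaces, together with the two lemmas you prove (closed subspaces of D-spaces are D-spaces; stationary subsets of regular uncountable cardinals with the order topology are not D-spaces), yields the result cleanly. Both lemmas are handled correctly; in particular, your pressing-down style argument that any closed discrete subset of a stationary $S \subseteq \kappa$ is bounded is fine.

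However, there is nothing for me to compare against: the paper does not prove this theorem. It is quoted in the historical survey of Section~1.1 as one of several results of Borges and Wehrly (1991), with no argument supplied. So strictly speaking your proposal goes beyond what the paper does. It is also worth noting, as a point of context, that the Balogh--Rudin structure theorem you invoke appeared in 1992, \emph{after} the Borges--Wehrly paper; the original proof therefore could not have gone this route and presumably worked more directly with a monotone normality operator and the covering behavior guaranteed by the D-property. Your closing paragraph already anticipates this alternative line and correctly identifies that the combinatorial weight ends up comparable either way. For a thesis citing the result without proof, your proposal is a perfectly acceptable modern justification; just be aware that it is not a reconstruction of the original argument, and that the heavy external input (Balogh--Rudin) should be cited explicitly.
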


Literature on \textit{stratifiable spaces} and \textit{semi-stratifiable spaces} could be found in Borges (1966)$\,^{\text{\cite{borges}}}$ and Henry (1971)$\,^{\text{\cite{henry}}}$.
Basically, A semi-stratifiable space is defined as follows.


\begin{definition}
	A topological space $(X, \tau)$ is a semi-stratifiable space if, to each open set $U$ of $X$, one can assign a sequence $\{ F_n \}_{n=1}^{\infty}$ of closed sets of $X$ such that

	\begin{enumerate}
		\item $\bigcup\limits_{n=1}^{\infty} F_n = U$
		\item $F_n \subseteq G_n$ for all $n \in \mathbb{N}$ whenever $U \subseteq V$, where $V$ is an open set of $X$ and $\{ G_n \}_{n=1}^{\infty}$ is the sequence of closed sets associated with $V$ such that $\bigcup\limits_{n=1}^{\infty} G_n = V$.
	\end{enumerate}
\end{definition}

The proof of theorem~\ref{semi-start-is-d} also involves applying transfinite recursion to construct a closed discrete set $D$ for every open neighborhood assignment.

The notion of \textit{monotonically normal spaces}, as mentioned in theorem~\ref{mon-normal-d-are-paracompact}, was first introduced in Borges (1966)$\,^{\text{\cite{borges}}}$ without a name, then it was named by P. L. Zenor (Heath, et al., 1973)$\,^{\text{\cite{heath-lutzer-zenor}}}$.

\begin{definition}
	A space $(X, \tau)$ is a monotonically normal space if there is a function $G$ which assigns to each ordered pair $(H, K)$ of disjoint closed sets of $X$ an open set $G(H, K)$ such that
	\begin{enumerate}
		\item $H \subseteq G(H, K) \subseteq \overline{G(H, K)} \subseteq X \setminus K$
		\item if $(H^{*}, K^{*})$ is any other pair of disjoint closed sets having $H \subseteq H^{*}$ and $K \supseteq K^{*}$, then $G(H, K) \subseteq G(H^{*}, K^{*})$
	\end{enumerate}
	The function $G$ is called a monotone normality operator for $X$.
\end{definition}

The same authors thought they have proved that every $T_1$ Lindel\"of space $X$ is a D-space in Borges and Wehrly (1996)$\,^{\text{\cite{borges-wehrly-qa}}}$.
However, they had made an assumption that wasn't justified in the proof, so they published another paper afterwards announcing they've not proved that statement (Borges and Wehrly, 1998)$\,^{\text{\cite{borges-wehrly-correction}}}$.
This will be explained in more detail in the following section.

For any totally ordered set $(S, \preceq)$, we can define a topology on $S$ called the \textit{order topology} which is generated by the subbase of open rays of $S$, i.e.\ all the sets ${ a^{\rightarrow} = \{ x : a \preceq x \} }$ and $b^{\leftarrow} = \{ x : x \preceq b \}$.
The space $(S, \preceq, \tau)$ where $\tau$ is the order topology on $S$ is called a \textit{linearly ordered space}.
A \textit{generalized ordered space} $(S, \preceq, \tau)$ on $S$ where $\preceq$ is a total order and $\tau$ is a topology on $S$, is a space that could be embedded into a linearly ordered space $(S', \preceq', \tau')$ where the embedding is both an order embedding and a topological embedding.
Another great result on D-spaces could be found in Van Douwen and Lutzer (1997)$\,^{\text{\cite{van-douwen-lutzer}}}$ which states the following.

\begin{theorem}\label{go-is-d}
	A generalized ordered space $X$ is paracompact iff $X$ is a D-space.
\end{theorem}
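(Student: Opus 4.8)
\emph{Overview.} I would prove the two implications separately; essentially all the content is in ``paracompact $\Rightarrow$ D''. For the reverse implication, recall the classical theorem of Heath, Lutzer and Zenor that every generalized ordered space is monotonically normal (convex open neighbourhoods provide a monotone normality operator). Thus a generalized ordered space that is a D-space is a monotonically normal D-space, and Theorem~\ref{mon-normal-d-are-paracompact} gives paracompactness at once. The same implication also falls out of the structure theorem used below, together with the routine facts that the D-property is closed-hereditary and that a stationary subset of a regular uncountable cardinal is never a D-space.

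\emph{Setting up the hard direction.} Let $X$ be a paracompact generalized ordered space and $N\colon X\to\tau$ an open neighbourhood assignment. I embed $X$ in a linearly ordered space and, using that convex open sets form a base, shrink each $N(x)$ so that every $N(x)$ is convex. By paracompactness the open cover $N(X)$ has a locally finite open refinement, which a standard argument in generalized ordered spaces lets me take to consist of convex sets; call it $\mathcal V=\{V_j:j\in J\}$, and for each $j$ fix $p_j$ with $V_j\subseteq N(p_j)$, so that $\bigcup_j N(p_j)\supseteq\bigcup_j V_j=X$. The whole difficulty is now to choose the witnessing points so that they form a \emph{closed discrete} set. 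For this I would build the kernel by transfinite recursion, in the style of the proof of Theorem~\ref{gls-is-d}: fix a well-order on $X$, refined by the linear order where convenient, and recursively let $d_\xi$ be the least point of the closed remainder $C_\xi=X\setminus\bigcup_{\eta<\xi}N(d_\eta)$, stopping once $C_\xi=\emptyset$; put $D=\{d_\xi\}$. Convexity of the $N(d_\eta)$ and local finiteness of $\mathcal V$ are what one exploits to see that the remainders shrink interval-wise, so that the $N(d_\eta)$ sweep across $X$ while the $d_\eta$ stay from accumulating.

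\emph{The main obstacle.} The delicate point---and the only place the ordered hypothesis really enters---is proving that this recursion terminates and that $D$ is closed and discrete rather than ``piling up''. In $\omega_1$, the prototypical non-paracompact generalized ordered space, the analogous recursion produces a cofinal non-closed set, so something genuine must be invoked. I would use the Engelking--Lutzer structure theorem: a generalized ordered space is paracompact if and only if it has no closed subspace homeomorphic to a stationary subset of a regular uncountable cardinal. If the recursively built $D$ failed to be closed discrete, then, transferring the configuration of the $d_\xi$ into the linearly ordered completion, the set of its bad accumulation points would supply a closed copy of such a stationary set $S\subseteq\kappa$, contradicting paracompactness. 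The combinatorial core---equivalently, the reason such an $S$ is not itself a D-space---is Fodor's pressing-down lemma: a subset of $S$ that is closed and discrete in the subspace topology must be non-stationary in $\kappa$, and hence its $N$-neighbourhoods cannot cover $S$. The real work, and where I expect to spend most of the effort, is the bookkeeping: choosing the $d_\xi$ canonically enough with respect to the order that their limit points are genuinely tracked and the ``bad set'' extracted from a failure is genuinely stationary and genuinely closed in $X$. Everything else---the convex-neighbourhood reductions, the easy direction, and closed-hereditariness of property D---is routine.
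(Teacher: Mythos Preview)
The paper does not actually prove this theorem. It appears in the historical survey of Chapter~1 as a cited result attributed to Van Douwen and Lutzer (1997), with no proof or proof sketch given in the text; the paper merely states it and then illustrates it with the example of the real line. So there is no ``paper's own proof'' to compare your proposal against.

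That said, your outline is a plausible route to the Van Douwen--Lutzer theorem. The easy direction via monotone normality of GO-spaces together with Theorem~\ref{mon-normal-d-are-paracompact} is clean and correct. For the hard direction, the ingredients you name---convex neighbourhoods, the Engelking--Lutzer characterisation of paracompact GO-spaces in terms of stationary sets, and Fodor's lemma---are indeed the natural tools, and your honest flagging of the bookkeeping in the recursion as the main obstacle is accurate. One caution: the argument you sketch (``if $D$ failed to be closed discrete, extract a closed stationary set'') is really more naturally run as a contrapositive at the level of the structure theorem rather than as a salvage of a particular recursion, and the original proof organises things somewhat differently. If you intend to write this up in full you should consult the Van Douwen--Lutzer paper directly, since the details of producing a closed discrete kernel in a paracompact GO-space are genuinely delicate and not fully captured by your sketch.
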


As an example, since the real line with the usual topology $(\mathbb{R}, \leq, \tau_u)$ is a linearly ordered space, and thus is a generalized ordered space;
and since it is also a paracompact space, then by theorem~\ref{go-is-d}, $(\mathbb{R}, \leq, \tau_u)$ is a D-space.

An approach was proposed in Fleissner and Stanley (2001)$\,^{\text{\cite{fleissner-stanley}}}$ to make some proofs concerning D-spaces easier by introducing the notion of \textit{U-sticky sets}.

\begin{definition}
	Let $U: X \rightarrow \tau$ be an open neighborhood assignment and $D \subseteq X$.
	The set $D$ is said to be $U$-sticky if $D$ is closed discrete and satisfies \[\forall x \in X\left[U(x) \cap D \neq \phi \Longrightarrow x \in \bigcup U(D)\right]\]
\end{definition}

They also defined an order relation $\preceq_U$ on U-sticky sets of $X$ as follows.

\begin{definition}
	Let $U: X \rightarrow \tau$ be an open neighborhood assignment and \linebreak ${ \mathcal{D}(U) = \{ D \subseteq X : D \text{ is U-sticky} \} }$.
	For any $D, D' \in \mathcal{D}(U)$, if
	\begin{enumerate}
		\item $D \subseteq D'$
		\item $(D' \setminus D) \cap \bigcup U(D) = \phi$
	\end{enumerate}
	, then $D \preceq_{U} D'$.
	Moreover, $\mathcal{C}$ is a $\preceq_{U}$-chain if $\mathcal{C} \subseteq \mathcal{D}(U)$ and for every $D, D' \in \mathcal{D}(U)$, either $D \preceq_{U} D'$ or $D' \preceq_{U} D$.
\end{definition}

An interesting fact about $\preceq_{U}$-chains is that for any $\preceq_{U}$-chain $\mathcal{C}$, we have $\bigcup \mathcal{C} \in \mathcal{D}(U)$, i.e.\ $\bigcup \mathcal{C}$ is a U-sticky set.
This is true since $(\mathcal{D}(U), \preceq_U)$ satisfies the hypothesis of Zorn's lemma.
Using these definitions, they found an equivalent statement to the definition of D-spaces.

\begin{theorem}\label{u-stickey-d-space}
	A space $(X, \tau)$ is a D-space iff for every open neighborhood assignment $U:X \rightarrow \tau$, for every $D \in \mathcal{D}(U)$, and for every $x \in X$, there exists $D' \in \mathcal{D}(U)$ such that $x \in \bigcup U(D')$ and $D \preceq_U D'$.
\end{theorem}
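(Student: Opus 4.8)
The plan is to establish the two implications separately, with the bulk of the work in the ``only if'' direction. The ``if'' direction will be a short Zorn's lemma argument built on the fact, noted above, that $(\mathcal{D}(U),\preceq_U)$ satisfies the hypothesis of Zorn's lemma.

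For the ``if'' direction, fix an open neighborhood assignment $U:X\to\tau$. First I would record that $\preceq_U$ is a partial order on $\mathcal{D}(U)$ --- reflexivity and antisymmetry are immediate from clause (1) of its definition, and transitivity follows by splitting a point of $D''\setminus D$ according to whether it lies in $D'$ and using clause (2) twice --- and that for any $\preceq_U$-chain $\mathcal{C}$ the set $\bigcup\mathcal{C}$ is both in $\mathcal{D}(U)$ and an upper bound for $\mathcal{C}$. Zorn's lemma then produces a $\preceq_U$-maximal element $D^{*}\in\mathcal{D}(U)$. I claim $\bigcup U(D^{*})=X$: if not, choose $x\in X\setminus\bigcup U(D^{*})$ and apply the hypothesis with $D=D^{*}$ to obtain $D'\in\mathcal{D}(U)$ with $x\in\bigcup U(D')$ and $D^{*}\preceq_U D'$; since $x\notin\bigcup U(D^{*})$ forces $D'\neq D^{*}$, this contradicts maximality of $D^{*}$. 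Hence $D^{*}$ is a closed discrete set with $\bigcup U(D^{*})=X$, so $X$ is a D-space.

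For the ``only if'' direction, assume $X$ is a D-space and fix $U$, $D\in\mathcal{D}(U)$, and $x\in X$. Write $G=\bigcup U(D)$, an open set with $D\subseteq G$, and set $Y=X\setminus G$. The key idea is to transform $U$ into the open neighborhood assignment $U'$ given by $U'(z)=U(z)$ when $z\in Y$ and $U'(z)=G$ when $z\in G$; this is a legitimate assignment. Applying the D-space property to $U'$ yields a closed discrete set $E'$ with $\bigcup U'(E')=X$. Since each point of $E'\cap G$ contributes only $G$ to this union, the points of $E'\cap Y$ must cover $Y$, and there $U'$ agrees with $U$; so, putting $E=E'\cap Y$, we get $Y\subseteq\bigcup_{e\in E}U(e)$ and therefore $\bigcup U(D\cup E)=G\cup\bigcup_{e\in E}U(e)=X$. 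I would then take $D'=D\cup E$. It is $U$-sticky because it is closed discrete (the one point requiring care, addressed below) and covers $X$ via $U$, which makes the sticky implication vacuous; it satisfies $D\preceq_U D'$ because $D'\setminus D\subseteq E\subseteq Y$ is disjoint from $G=\bigcup U(D)$; and $x\in X=\bigcup U(D')$.

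The step I expect to demand the most attention is checking that $E=E'\cap Y$, and then $D'=D\cup E$, are closed discrete in $X$, since the paper works without separation axioms. Both $D$ and $E$ are closed ($D$ by hypothesis, $E$ as an intersection of closed sets), so $D'$ is closed; and because $D\subseteq G$ while $E\subseteq Y=X\setminus G$, the open set $G$ separates $D$ from $E$ and the open set $X\setminus D$ separates $E$ from $D$, which upgrades the discreteness of $D$ and of $E$ to discreteness of $D\cup E$. The only genuinely clever ingredient is the ``capping'' of $U'$ at $G$ on the set $G$: it is exactly what prevents the points of $E'$ inside $G$ from reaching out into $Y$, so that discarding them leaves a set $E$ that still covers $Y$ and keeps $D'$ inside clause (2) of $\preceq_U$.
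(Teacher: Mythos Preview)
The paper does not contain a proof of this theorem; it is quoted from Fleissner and Stanley (2001) as background, so there is nothing in the paper to compare against directly. That said, your argument is correct and is essentially the standard Fleissner--Stanley proof: Zorn's lemma on $(\mathcal{D}(U),\preceq_U)$ for the ``if'' direction, and for the ``only if'' direction the capping trick $U'(z)=G$ on $G=\bigcup U(D)$ followed by $D'=D\cup(E'\cap Y)$. All the verifications you flag (closedness and discreteness of $D\cup E$ via the open separator $G$ and the closed set $D$, the covering $Y\subseteq\bigcup U(E)$, and $(D'\setminus D)\cap G=\emptyset$) go through exactly as you outline, and no separation axioms are needed.
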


Applying theorem~\ref{u-stickey-d-space}, the authors could provide shorter proofs for results that were already established by others.

In Arhangel'skii and Buzyakova (2002)$\,^{\text{\cite{arhangelskii-buzyakova}}}$, it was proved that if a regular topological space $(X, \tau)$ is a union of finitely many metrizable subspaces, then $X$ is a D-space.
This implies that every metrizable space is a D-space (and thus, all metric spaces are D-spaces).
To prove this result, the authors had to first prove a central theorem in their paper that depends on the concept of a \textit{point countable base}.

\begin{definition}
	Let $(X, \tau)$ be a topological space.
	A base $\mathcal{B}$ of $X$ is a point countable base if each point of $X$ is contained in at most countably many open sets of $\mathcal{B}$.
\end{definition}

The following is the statement of their central theorem.

\begin{theorem}
	Every space $X$ with a point countable base $\mathcal{B}$ is a D-space.
\end{theorem}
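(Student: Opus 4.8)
The plan is to build, for an arbitrary open neighborhood assignment $N : X \rightarrow \tau$, a closed discrete set $D$ with $\bigcup N(D) = X$ by transfinite recursion, using the point countable base $\mathcal{B}$ to guarantee that the recursion terminates and that the set produced is closed discrete. First I would fix, for each $x \in X$, a basic open set $B_x \in \mathcal{B}$ with $x \in B_x \subseteq N(x)$; replacing $N$ by $x \mapsto B_x$ we may assume without loss of generality that $N(x) \in \mathcal{B}$ for every $x$. The key consequence of point countability is that for each $x \in X$ the family $\{\, y \in X : x \in N(y)\,\}$ meets only countably many distinct members of $\mathcal{B}$, so the collection of candidate neighborhoods covering a given point is ``small'' — this is exactly what will let us control accumulation.

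Next I would run the recursion. At stage $\alpha$ assume we have a set $D_{<\alpha} = \bigcup_{\beta < \alpha} D_\beta$ that is closed discrete, with each $D_\beta$ chosen so that $\bigcup N(D_\beta)$ strictly grows. If $\bigcup N(D_{<\alpha}) = X$ we stop and take $D = D_{<\alpha}$. Otherwise let $Y_\alpha = X \setminus \bigcup N(D_{<\alpha})$, a nonempty closed subspace, and choose $D_\alpha \subseteq Y_\alpha$ to be a maximal subset of $Y_\alpha$ that is closed discrete in $X$, relatively discrete, and has the ``sticky'' property that no point of $Y_\alpha$ outside $\bigcup N(D_\alpha)$ lies in $N(d)$ for any $d \in D_\alpha$ — such a maximal set exists by Zorn's lemma (this is morally the $U$-sticky construction of Fleissner–Stanley applied inside $Y_\alpha$). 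Maximality forces $\bigcup N(D_\alpha) \supseteq Y_\alpha$, hence $\bigcup N(D_{\le\alpha}) = X$ after one more step, or at worst the uncovered set shrinks; in any case $D_{<\alpha}$ together with $D_\alpha$ is again closed discrete because points of $D_\alpha$ lie outside the open set $\bigcup N(D_{<\alpha})$ that contains all earlier points.

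The main obstacle — and the place where the point countable base is genuinely needed — is showing that $D = \bigcup_\alpha D_\alpha$ is \emph{closed and discrete} in $X$, and relatedly that the recursion halts. Suppose $p \in X$ is an accumulation point of $D$. Then every neighborhood of $p$, in particular $N(p) \in \mathcal{B}$, contains infinitely many points of $D$; but the crucial point-countability estimate is used the other way: for each fixed $d$, the basic sets $N(d')$ that contain $d$ come from a countable subfamily of $\mathcal{B}$, and since the $D_\alpha$'s are built to be sticky inside successively smaller closed remainders, only countably many indices $\alpha$ can contribute points $d$ with $p \in N(d)$ or with $d$ near $p$. Making this precise — choosing the basic neighborhood witnessing accumulation, counting how many stages can place points inside it, and deriving a contradiction with the strict descent of the remainders $Y_\alpha$ — is the technical heart of the argument; once it is in hand, closedness, discreteness, and termination (the ordinal $\alpha$ cannot exceed $|X|^+$, say) all follow, and $\bigcup N(D) = X$ by construction.
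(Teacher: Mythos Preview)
The paper does not actually supply a proof of this theorem: it is quoted from Arhangel'skii and Buzyakova (2002) as part of the historical survey in Chapter~1, with no argument given. So there is no ``paper's own proof'' to compare against, and your proposal has to be judged on its own merits.

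As it stands, your outline has two genuine gaps. First, the sentence ``Maximality forces $\bigcup N(D_\alpha) \supseteq Y_\alpha$'' is not justified and is in general false: if some $y \in Y_\alpha$ is uncovered, you would like to enlarge $D_\alpha$ by adjoining $y$, but $D_\alpha \cup \{y\}$ need not be closed discrete (nothing prevents $y$ from being an accumulation point of $D_\alpha$). This is precisely why the Fleissner--Stanley characterization (Theorem~1.21 in the paper) is not a triviality: the existence of a maximal $U$-sticky set does \emph{not} automatically give a kernel. You hedge with ``or at worst the uncovered set shrinks'', but you never say why it must shrink, and without that the recursion could stall.

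Second, you correctly identify the real work --- showing that $D = \bigcup_\alpha D_\alpha$ is closed discrete, and that the recursion halts --- and then explicitly do not do it. The paragraph beginning ``Suppose $p \in X$ is an accumulation point'' gestures at a counting argument but never pins down which countable family is being counted or how that contradicts anything; the phrase ``only countably many indices $\alpha$ can contribute points $d$ with $p \in N(d)$ or with $d$ near $p$'' conflates two different conditions and does not by itself yield a contradiction. In the actual Arhangel'skii--Buzyakova argument, point countability is used in a more structured way: one fixes in advance, for each $x$, an enumeration of the (countably many) basic sets containing $x$, and the recursion is organized so that at each stage a specific basic set from that enumeration is exhausted, which is what forces termination in countably many rounds at each point and gives local finiteness of $D$. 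Your sketch has the right ingredients but does not assemble them into a proof.
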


Buzyakova independently proved in Buzyakova (2002)$\,^{\text{\cite{buzyakova-1}}}$ a stronger result than ``all metrizable spaces are D-spaces'' as stated in the following theorem.

\begin{theorem}\label{strong-simga-is-d}
	Every strong $\Sigma$ space is D-space.
\end{theorem}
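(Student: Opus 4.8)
The plan is to exploit the structural characterization of strong $\Sigma$-spaces: $X$ is a strong $\Sigma$-space precisely when there is a $\sigma$-locally finite family $\mathcal F = \bigcup_{n<\omega}\mathcal F_n$ of closed subsets of $X$ (each $\mathcal F_n$ locally finite, and I may assume $\mathcal F_n \subseteq \mathcal F_{n+1}$) together with a cover $\mathcal C$ of $X$ by compact sets such that whenever $C \in \mathcal C$ and $U$ is open with $C \subseteq U$, there is $F \in \mathcal F$ with $C \subseteq F \subseteq U$. Given an open neighborhood assignment $N : X \to \tau$, I would first attach to each $C \in \mathcal C$ a finite $A_C \subseteq C$ with $C \subseteq N[A_C] := \bigcup_{a \in A_C} N(a)$ (compactness) and then a member $F_C \in \mathcal F$ with $C \subseteq F_C \subseteq N[A_C]$; writing $\ell(C)$ for the least $n$ with $F_C \in \mathcal F_n$, I would define the rank $\rho(x) := \min\{\ell(C) : C \in \mathcal C,\ x \in C\}$ and fix $C_x \in \mathcal C$ with $x \in C_x$ and $\ell(C_x) = \rho(x)$.

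Next I would build a closed discrete $D$ with $\bigcup N(D) = X$ by transfinite recursion. Maintaining an open set $O_\xi = \bigcup_{\eta<\xi} N[A_\eta]$ and its closed complement $Z_\xi = X \setminus O_\xi$, at stage $\xi$ I stop if $Z_\xi = \emptyset$; otherwise I choose an uncovered point $z_\xi$ of minimal rank $r_\xi := \min\{\rho(z) : z \in Z_\xi\}$ and a finite $A_\xi \subseteq C_{z_\xi} \cap Z_\xi$ with $C_{z_\xi} \cap Z_\xi \subseteq N[A_\xi]$ (possible since $C_{z_\xi}\cap Z_\xi$ is compact). Set $D = \bigcup_\xi A_\xi$. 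Since each stage covers at least $z_\xi$, the $Z_\xi$ strictly decrease and the recursion terminates with $Z_\theta = \emptyset$, so $\bigcup N(D) = X$. Three facts then drive the rest: $r_\xi$ is nondecreasing in $\xi$ (the $Z_\xi$ shrink); the assignment $\xi \mapsto C_{z_\xi}$ is injective (after stage $\xi$ the set $C_{z_\xi}$ meets no later $Z_{\xi'}$, so no later $z_{\xi'}$ lies in it); and every $x$ is covered before $r_\xi$ first exceeds $\rho(x)$ (if $x \in Z_\xi$ then $r_\xi \le \rho(x)$).

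Then I would verify that $D$ is closed discrete. Fix $p$ with $m := \rho(p)$ and let $\xi^*$ be the first stage with $r_{\xi^*} > m$ (or $\theta$ if none); then every point of rank $\le m$ — in particular all of $C_p$ — is covered by stage $\xi^*$, so $O_{\xi^*}$ is an open neighborhood of $p$. Let $\xi_0 < \xi^*$ be the first stage at which $p$ is covered, say $p \in N(d_0)$ with $d_0 \in A_{\xi_0}$; note $r_{\xi_0} \le m$. I would take $V := N(d_0) \cap O_{\xi^*} \cap V_0 \cap \cdots \cap V_m$, where $V_k$ is a neighborhood of $p$ meeting only finitely many members of $\mathcal F_k$. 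For $\xi \ge \xi^*$ one has $A_\xi \subseteq Z_\xi \subseteq Z_{\xi^*} = X \setminus O_{\xi^*}$, so $A_\xi \cap V = \emptyset$; for $\xi_0 < \xi < \xi^*$ one has $A_\xi \subseteq Z_\xi \subseteq X \setminus N(d_0)$, so again $A_\xi \cap V = \emptyset$; and for $\xi \le \xi_0$ one has $r_\xi \le m$ and $A_\xi \subseteq C_{z_\xi} \subseteq F_{C_{z_\xi}} \in \mathcal F_{r_\xi}$, so $A_\xi \cap V \ne \emptyset$ forces $F_{C_{z_\xi}}$ into the finite family $\{F \in \mathcal F_{r_\xi} : F \cap V_{r_\xi} \ne \emptyset\}$. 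Thus every point of $V \cap D$ comes from a stage $\xi \le \xi_0$ whose network set $F_{C_{z_\xi}}$ lies in a fixed finite subfamily of $\mathcal F_0 \cup \cdots \cup \mathcal F_m$.

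The hard part will be the last step of that verification: passing from this finiteness of possible network sets to finiteness of the contributing stages. Injectivity of $\xi \mapsto C_{z_\xi}$ reduces it to bounding, for each of the finitely many relevant network sets $G$, how many $C \in \mathcal C$ with $F_C = G$ actually get selected — and a priori one $G$ can serve infinitely many members of $\mathcal C$. Closing this gap is the real content of the theorem: one must either arrange the assignment $C \mapsto F_C$ (and the choice of $C_x$) to be suitably finite-to-one near each point, or refine the recursion — for instance having each stage exhaust as much of $F_{C_{z_\xi}}$ as the already-covered part permits, and tracking how the uncovered portions of a fixed network set get consumed across stages. Everything else (covering, termination, the rank stratification that pushes the high-level selections into the uncovered tail away from $p$, and the local-finiteness bound within each of the finitely many levels $\le \rho(p)$) is routine once this combinatorial point is handled.
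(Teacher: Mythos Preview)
The paper does not prove this theorem; it merely cites it from Buzyakova (2002) in the historical survey of Chapter~1, so there is no ``paper's own proof'' to compare against.

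As for your proposal itself, you have correctly located the crux and are honest that it is still open in your write-up: after reducing to finitely many network sets $G\in\mathcal F_0\cup\cdots\cup\mathcal F_m$ that meet $V$, you still need that only finitely many stages $\xi$ have $F_{C_{z_\xi}}=G$, and your injectivity $\xi\mapsto C_{z_\xi}$ does not give this. The standard way to close this gap is to choose $A_\xi$ so that $N[A_\xi]\supseteq F_{C_{z_\xi}}$ rather than merely $\supseteq C_{z_\xi}\cap Z_\xi$; concretely, take $A_\xi=A_{C_{z_\xi}}$, the finite set you already fixed with $C_{z_\xi}\subseteq F_{C_{z_\xi}}\subseteq N[A_{C_{z_\xi}}]$. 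Then after stage $\xi$ the entire network set $F_{C_{z_\xi}}$ is covered, so no later $z_{\xi'}\in Z_{\xi'}$ can lie in $F_{C_{z_\xi}}$; since $z_{\xi'}\in C_{z_{\xi'}}\subseteq F_{C_{z_{\xi'}}}$, this forces $F_{C_{z_{\xi'}}}\neq F_{C_{z_\xi}}$, i.e.\ $\xi\mapsto F_{C_{z_\xi}}$ is injective. That immediately gives finitely many contributing stages near $p$. You then lose the property $A_\xi\subseteq Z_\xi$, so the middle case $\xi_0<\xi<\xi^*$ of your closed-discrete argument must be redone; but since every $A_\xi\subseteq C_{z_\xi}\subseteq F_{C_{z_\xi}}\in\mathcal F_{r_\xi}$ with $r_\xi\le m$ for all $\xi<\xi^*$, the same local-finiteness bound via $V_0\cap\cdots\cap V_m$ now handles \emph{all} stages $\xi<\xi^*$ uniformly, and the injectivity of $\xi\mapsto F_{C_{z_\xi}}$ finishes it.
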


A family of subsets of a topological space $(X, \tau)$ is called a $\sigma$-locally finite family if it can be represented as a countable union of locally finite families of subsets of $X$.

\begin{definition}
	A space $X$ is said to be a strong $\Sigma$ space if there exists a $\sigma$-locally finite family $\mathcal{F}$ of closed sets in $X$ and a cover $\mathcal{K}$ of compact subsets of $X$, such that for each $U \in \tau$ that contains a point of a set $K$ for some $K \in \mathcal{K}$, there exists a set $F \in \mathcal{F}$ that satisfies $K \subseteq F \subseteq U$.
\end{definition}

The class of strong $\Sigma$ spaces contains all metrizable spaces, $\sigma$-compact spaces, Lindel\"of $\Sigma$ spaces, and paracompact $\Sigma$ spaces.
Therefore, according to theorem~\ref{strong-simga-is-d}, all these spaces are D-spaces.
Buzyakova also tried but failed to answer the question whether each (regular) Lindel\"of space is a D-space.
However, she mentioned one of the approaches to solve this problem as proposed by a question of Arhangel'skii.

\begin{question}[Arhangel'skii]\label{arhangelskii-question}
	Is it true that a continuous image of a (regular) Lindel\"of D-space is a D-space?
\end{question}

A generalization of the property D, the property ``aD'' was also introduced in the paper Arhangel'skii and Buzyakova (2002)$\,^{\text{\cite{arhangelskii-buzyakova}}}$.
The weaker property aD is defined as follows.

\begin{definition}[aD-space]
	A topological space $(X, \tau)$ is an aD-space if for every closed set $F$ and every open cover $\mathcal{O}$ of $X$, there exists a discrete subset $A$ of $F$ and an \linebreak assignment ${ C : A \rightarrow \mathcal{O} }$ that satisfies $a \in C(a)$ for all $a$ in $A$, where the family \linebreak ${ C(A) = \{ C(a) : a \in A \} }$ is a cover of $F$.
\end{definition}

They proved the following facts about aD-spaces.

\begin{theorem}
	Every paracompact space is an aD-space.
\end{theorem}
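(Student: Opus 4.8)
The plan is to use paracompactness to replace $\mathcal{O}$ by a locally finite open cover and then carve out the required discrete set by a greedy transfinite selection, one point per member of the refinement.

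So, let $F\subseteq X$ be closed and let $\mathcal{O}$ be an open cover of $X$. First I would fix a locally finite open refinement $\mathcal{U}$ of $\mathcal{O}$, discard those members of $\mathcal{U}$ missing $F$, and for each remaining $U\in\mathcal{U}$ choose $O_U\in\mathcal{O}$ with $U\subseteq O_U$. Well-order $\mathcal{U}=\{U_\alpha:\alpha<\kappa\}$ with distinct indices. Then define $A$ recursively: at stage $\alpha$, if $(F\cap U_\alpha)\setminus\bigcup_{\beta<\alpha}U_\beta\neq\emptyset$ choose a point $a_\alpha$ in this set and set $C(a_\alpha):=O_{U_\alpha}$; otherwise do nothing. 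Let $A:=\{a_\alpha:a_\alpha\text{ was chosen}\}$.

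Checking the covering requirements is then routine. Each chosen $a_\alpha$ lies in $U_\alpha\subseteq O_{U_\alpha}=C(a_\alpha)$, so $a\in C(a)$ for every $a\in A$. If $x\in F$, then since $\mathcal{U}$ covers $X$ there is a least $\gamma$ with $x\in U_\gamma$; then $x$ belongs to the leftover set at stage $\gamma$, so $a_\gamma$ exists, and $x\in U_\gamma\subseteq C(a_\gamma)$. Hence $\{C(a):a\in A\}$ is a cover of $F$ with $C(A)\subseteq\mathcal{O}$.

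The step I expect to be the main obstacle is showing that $A$ is discrete. Here local finiteness of $\mathcal{U}$ does the work: the greedy choice forces $a_\alpha\notin U_\beta$ for every $\beta<\alpha$, so $U_\alpha$ contains no later-chosen point, and intersecting $U_\alpha$ with a neighbourhood of $a_\alpha$ meeting only finitely many members of $\mathcal{U}$ yields a neighbourhood of $a_\alpha$ meeting $A$ in a finite set; thus $A$ is locally finite as a subset of $X$. Turning "locally finite" into "discrete" is the delicate point: under a $T_1$-type separation hypothesis one simply deletes the finitely many unwanted points from that neighbourhood, and a locally finite family of singletons has closed union, so $A$ is even closed discrete; with no separation assumption one must be more careful, arranging in addition that no two chosen points lie in each other's closure, which forces a more deliberate selection at each stage, and one then has to check that such a choice remains available and still covers $F$. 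Finally, since a closed subspace of a paracompact space is paracompact, it is convenient to run the whole recursion inside $F$, so that being discrete in $F$ is exactly what the definition of an aD-space asks for.
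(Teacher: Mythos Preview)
The thesis does not supply a proof of this theorem; it is simply quoted from Arhangel'skii and Buzyakova (2002). So there is no argument in the paper to compare against, and I can only assess your proposal on its own merits.

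Under a $T_1$ hypothesis your argument is correct and is essentially the standard one. Two small remarks. First, the local-finiteness step is actually cleaner than you state: for \emph{any} $x\in X$ (not just $x\in A$), a neighbourhood $V$ of $x$ meeting only $U_{\beta_1},\dots,U_{\beta_n}$ already satisfies $V\cap A\subseteq\{a_{\beta_1},\dots,a_{\beta_n}\}$, since each $a_\gamma\in U_\gamma$. Thus $A$ is locally finite in $X$, hence closed; with $T_1$ you then excise the finitely many stray points to obtain discreteness, exactly as you say. Second, your observation that one may run the construction inside the closed paracompact subspace $F$ is apt and tidies the bookkeeping.

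The one genuine soft spot is the non-$T_1$ case. Your proposed repair --- choosing each $a_\alpha$ outside the closures of all earlier chosen points --- is not obviously feasible: nothing guarantees that $(F\cap U_\alpha)\setminus\bigl(\bigcup_{\beta<\alpha}U_\beta\cup\bigcup_{\beta<\alpha}\overline{\{a_\beta\}}\bigr)$ is nonempty when the coarser leftover set is, so both the covering conclusion and the existence of the recursion could break down. Since Arhangel'skii and Buzyakova work throughout in (at least) $T_1$ spaces, and the thesis's convention of dropping Hausdorff from the definition of paracompactness is a local choice of wording rather than a claim about the scope of the cited theorem, it is safe to take $T_1$ as a standing assumption here and regard your proof as complete.
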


\begin{theorem}\label{addition-theorem-ad-space}
	Let $(X, \tau)$ be a regular topological space such that $X = Y \cup Z$, where $Y$ is a paracompact subspace of $X$ and $Z$ is an aD space, then $X$ is an aD space.
\end{theorem}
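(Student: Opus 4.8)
The plan is to split $F$ along the decomposition $X=Y\cup Z$: cover the part of $F$ lying in the aD-subspace $Z$, then cover the remainder, which will be forced into the paracompact subspace $Y$ (an aD-space by the theorem above), and finally glue the two discrete sets obtained. Fix a closed set $F\subseteq X$ and an open cover $\mathcal{O}$ of $X$. Since $F\cap Z$ is closed in $Z$ and $\{O\cap Z:O\in\mathcal{O}\}$ covers $Z$, the aD property of $Z$ yields a set $A_1\subseteq F\cap Z$ that is discrete in the subspace topology — hence discrete in $X$, the subspace topology being transitive — together with members $O_a\in\mathcal{O}$ $(a\in A_1)$ satisfying $a\in O_a$ and $F\cap Z\subseteq W:=\bigcup_{a\in A_1}O_a$; note $A_1\subseteq W$ and $W$ is open in $X$. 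Put $F_1:=F\setminus W$. Then $F_1$ is closed in $X$, and $F_1\subseteq Y$ because $F\cap Z\subseteq W$, so $F_1$ is closed in $Y$; applying the aD property of $Y$ to $F_1$ and $\{O\cap Y:O\in\mathcal{O}\}$ gives an $X$-discrete set $A_2\subseteq F_1$ and members $O_a\in\mathcal{O}$ $(a\in A_2)$ with $a\in O_a$ and $F_1\subseteq\bigcup_{a\in A_2}O_a$. Setting $A:=A_1\cup A_2$ and $C(a):=O_a$ one gets $A\subseteq F$, $a\in C(a)$ for all $a$, and $C(A)$ covers $F$, since $F\cap W$ is covered by $\{O_a:a\in A_1\}$ and $F\setminus W=F_1$ by $\{O_a:a\in A_2\}$.

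Everything reduces to the one remaining verification, namely that $A=A_1\cup A_2$ is discrete in $X$, and this is where I expect the real difficulty to lie. For $a\in A_1$ it is immediate, because $W$ is an open neighbourhood of $a$ disjoint from $A_2\subseteq F\setminus W$, which we intersect with a discreteness witness for $a$ in $A_1$. For $a\in A_2$ one needs a neighbourhood of $a$ avoiding $A_1$, i.e.\ one needs $A_2\cap\overline{A_1}=\emptyset$; but the aD property furnishes $A_1$ only \emph{discrete}, not closed discrete in $X$, so a cluster point of $A_1$ might lie in $F_1=F\setminus W$. Observe that if $Z$ were closed we would be finished at once, since then $\overline{A_1}\subseteq\overline{Z}=Z$ would be disjoint from $F_1\subseteq X\setminus Z$; the obstacle is precisely that $Z$ need not be closed, and regularity of $X$ is the hypothesis that must compensate for this.

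To overcome it I would first use regularity to pass to an open cover refining $\mathcal{O}$ each of whose members has its closure contained in a member of $\mathcal{O}$ — a solution of the aD condition for the refinement lifts back to one for $\mathcal{O}$ by replacing each selected set with a member of $\mathcal{O}$ that contains its closure — so that neighbourhoods chosen along the way can always be shrunk to ones with controlled closures. I would then recast the two covering steps as a single transfinite recursion: build an increasing family of discrete sets $A_\alpha\subseteq F$ together with the open sets $W_\alpha=\bigcup C(A_\alpha)$ they cover, arranging at each successor stage to adjoin a discrete batch drawn — via the aD property of whichever of $Y$, $Z$ meets it — from the closed part of $F\setminus W_\alpha$ that lies outside $\overline{A_\alpha}$, with neighbourhoods shrunk so that the enlarged set stays discrete and its closure stays off the untreated portion of $F$; the recursion stops once $F\subseteq W_\alpha$. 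The bulk of the work is then showing that this discreteness‑preservation invariant survives successor and, especially, limit stages; once that is secured, the conclusion that $A$ is discrete and $C(A)$ covers $F$ follows, completing the proof that $X$ is an aD-space.
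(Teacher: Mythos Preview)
The paper does not prove this theorem; it is merely quoted, without proof, as a result of Arhangel'skii and Buzyakova (2002), so there is no proof in the paper to compare your attempt against. I can therefore only assess your argument on its own merits.

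Your two-step decomposition is the natural one, and you correctly isolate the genuine obstacle: the aD property of $Z$ yields $A_1$ only discrete, not closed discrete, so a point $a\in A_2\subseteq F\setminus W$ may lie in $\overline{A_1}$, and then $A_1\cup A_2$ fails to be discrete at $a$. What follows, however, is not a proof but a plan. The transfinite recursion you sketch has a visible hole: at stage $\alpha$ you propose to draw the next batch from ``the closed part of $F\setminus W_\alpha$ that lies outside $\overline{A_\alpha}$'', but points of $(F\setminus W_\alpha)\cap(\overline{A_\alpha}\setminus A_\alpha)$ are then excluded from every later batch and there is no mechanism forcing them into any $W_\beta$, so the recursion may halt with $F\not\subseteq\bigcup C(A)$. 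The phrase ``with neighbourhoods shrunk so that the enlarged set stays discrete and its closure stays off the untreated portion of $F$'' is exactly the statement that needs proving, not an argument; and you yourself concede that the limit-stage verification --- ``the bulk of the work'' --- is left undone. As written, the difficulty is correctly diagnosed but not resolved, and the proposal does not constitute a proof.
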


\begin{theorem}
	Every closed subspace of an aD-space is an aD-space.
\end{theorem}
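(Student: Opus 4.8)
The plan is to pull the data witnessing aD-ness back from the ambient space to the closed subspace. Let $(X,\tau)$ be an aD-space and let $Y$ be a closed subspace of $X$. To verify that $Y$ is an aD-space, fix a closed subset $F$ of $Y$ and an open cover $\mathcal{O}$ of $Y$; I must produce a discrete $A \subseteq F$ and an assignment $C': A \to \mathcal{O}$ with $a \in C'(a)$ for all $a$ and with $C'(A)$ covering $F$.

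First I would lift everything to $X$. Since $Y$ is closed in $X$ and $F$ is closed in $Y$, the set $F$ is closed in $X$. For each $O \in \mathcal{O}$ choose (open in $X$) a set $\widetilde{O}$ with $\widetilde{O} \cap Y = O$, and put $\mathcal{O}^{*} = \{\widetilde{O} : O \in \mathcal{O}\} \cup \{X \setminus Y\}$. Because $Y$ is closed, $X \setminus Y \in \tau$, so $\mathcal{O}^{*}$ is an open cover of $X$. Now apply the aD property of $X$ to the closed set $F$ and the cover $\mathcal{O}^{*}$: this yields a discrete set $A \subseteq F$ and an assignment $C : A \to \mathcal{O}^{*}$ with $a \in C(a)$ for every $a \in A$ and with $C(A)$ covering $F$.

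Next I would push back down. Since $A \subseteq F \subseteq Y$, no $a \in A$ lies in $X \setminus Y$, so $a \in C(a)$ forces $C(a) \neq X \setminus Y$; hence each $C(a)$ equals $\widetilde{O}$ for some $O \in \mathcal{O}$. Selecting such an $O$ for each $a$ (a minor appeal to choice) defines $C' : A \to \mathcal{O}$ with $C'(a) = C(a) \cap Y$, so $a \in C(a)\cap Y = C'(a)$. The set $A$ is discrete in $Y$, since the subspace topology $A$ inherits from $Y$ agrees with the one it inherits from $X$. Finally, from $F \subseteq Y$ together with $F \subseteq \bigcup_{a \in A} C(a)$ we get $F = F \cap Y \subseteq \bigcup_{a \in A}(C(a) \cap Y) = \bigcup_{a \in A} C'(a)$, so $C'(A)$ covers $F$, and $Y$ is an aD-space.

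There is no serious obstacle here; the only points requiring a little care are the routine facts that a closed subset of a closed subspace is closed in the whole space, that adjoining $X \setminus Y$ genuinely yields an open cover of $X$ (this is exactly where closedness of $Y$ is used), and that the open set attached to a point of $A$ cannot be $X \setminus Y$, which is what permits transporting the assignment back to $\mathcal{O}$.
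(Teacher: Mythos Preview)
Your argument is correct and is the standard one. Note, however, that the paper itself does not supply a proof of this theorem: it is simply listed in the introductory survey as one of several facts about aD-spaces due to Arhangel'skii and Buzyakova (2002), so there is no proof in the paper to compare yours against.
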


\begin{theorem}\label{finite-union-of-paracompact-is-ad}
	If a regular topological space $(X, \tau)$ is the union of a finite collection of paracompact subspaces, then $X$ is an aD-space.
\end{theorem}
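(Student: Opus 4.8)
The plan is to proceed by induction on the number $n$ of paracompact subspaces whose union is $X$, using Theorem~\ref{addition-theorem-ad-space} as the engine of the inductive step and the fact that every paracompact space is an aD-space as the base case.

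For $n = 1$ the space $X$ is itself paracompact, so it is an aD-space by the theorem asserting that paracompact spaces are aD-spaces (and, if one wishes to anchor the induction at $n = 0$, the empty union gives the empty space, which is vacuously an aD-space). For the inductive step, assume the statement whenever a regular space is the union of $n$ paracompact subspaces, and let $X = P_1 \cup \cdots \cup P_{n+1}$ with each $P_i$ paracompact in its subspace topology. Set $Y = P_{n+1}$ and $Z = P_1 \cup \cdots \cup P_n$, with $Z$ carrying the subspace topology inherited from $X$.

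The first thing I would verify is that $Z$ satisfies the hypotheses of the inductive hypothesis. Regularity is hereditary, so $Z$ is a regular space. Moreover, since $P_i \subseteq Z \subseteq X$, the subspace topology that $P_i$ inherits from $Z$ coincides with the one it inherits from $X$, so each $P_i$ remains paracompact as a subspace of $Z$. Hence $Z$ is a regular space that is the union of the $n$ paracompact subspaces $P_1, \ldots, P_n$, and the inductive hypothesis gives that $Z$ is an aD-space. Now $X = Y \cup Z$ where $Y = P_{n+1}$ is a paracompact subspace of $X$ and $Z$ is an aD-space, so Theorem~\ref{addition-theorem-ad-space} applies and shows $X$ is an aD-space, closing the induction.

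This is essentially a bookkeeping reduction to Theorem~\ref{addition-theorem-ad-space}, so there is no deep obstacle; the points requiring care are the transitivity of the subspace topology (which keeps the $P_i$ paracompact inside $Z$) and the heredity of regularity to $Z$, both needed to legitimately invoke the inductive hypothesis on $Z$, together with anchoring the induction correctly rather than trying to split off a subspace when only one is present. Note that the theorem on closed subspaces of aD-spaces is not needed for this argument.
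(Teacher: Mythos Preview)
The paper does not supply a proof of this theorem; it is quoted from Arhangel'skii and Buzyakova (2002) alongside Theorem~\ref{addition-theorem-ad-space} as background. Your inductive argument is correct and is exactly the intended derivation: the addition theorem is set up precisely so that this finite-union statement follows by induction on the number of paracompact pieces, and your checks that regularity passes to $Z$ and that each $P_i$ retains its paracompactness inside $Z$ via transitivity of subspace topologies are the only points that need care.
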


Theorems about unions of subspaces such as theorem~\ref{addition-theorem-ad-space} are commonly called addition theorems.
For more on the property aD, the reader should consult the paper already mentioned where the property was first introduced, Arhangel'skii (2004)$\,^{\text{\cite{arhangelskii-1}}}$, and Arhangel'skii (2005)$\,^{\text{\cite{arhangelskii-2}}}$.

Important cardinals associated with D-spaces are the \textit{extent} and \textit{Lindel\"of degree} of a space.
The \textit{extent} of a topological space $(X,\tau)$, denoted as $e(X)$, is the supremum of all the cardinalities of its closed discrete subsets.
On the other hand, the Lindel\"of degree of $X$, denoted as $L(X)$, is the least cardinal $\kappa$ such that every open cover of $X$ has a subcover of cardinality less than or equal to $\kappa$.

For any topological space $(X, \tau)$ it is always true that $e(X) \leq L(X)$.
To prove this, take any closed discrete set $F$ of $X$.
Since for each $x \in F$, one can find an open set $U_x$ such that $U_x \cap F = \{x\}$, then the family $\mathcal{O} = \{ U_x : x \in F \} \cup \{ X \setminus F \}$ is an open cover of $X$.
Furthermore, any subcover of $\mathcal{O}$ must contain all the sets in $\{ U_x : x \in F \}$ since no point of $F$ is contained in $X \setminus F$ and no two sets $U_x$ and $U_y$ where $x \neq y$ contain a common point in $F$.
Thus, $\mathcal{O}$ has a minimal subcover $\mathcal{O}^{*}$ and its cardinality is either $|F|$ or $|F| + 1$ depending on whether the family $\{ U_x : x \in F \}$ alone covers $X$ or not.
Therefore, $L(X)$ is at least $|F|$.
However, this is true for every closed set $F$.
Hence, $L(X)$ is an upper bound on all the cardinalities of the closed discrete sets of $X$, i.e.\ $L(X) \geq e(X)$.

In the case of D-spaces, the extent and Lindel\"of degree coincide, i.e.\ if $(X, \tau)$ is a D-space, then $e(X) = L(X)$.
This is since any open cover could be converted to an open neighborhood assignment, i.e.\ if $\mathcal{O} = \{U_a : a \in A\}$ is an open cover, then for each $x \in X$ there is a point $a_x$ in $A$ such that $x \in U_{a_x}$.
Define $N:X \rightarrow \tau$ as $N(x) = U_{a_x}$, then $N$ is an open neighborhood assignment.
Since $X$ is a D-space, there exists a closed discrete set $D$ such that $\bigcup N(D) = X$.
This implies that $\mathcal{O}^{*} = \{U_{a_x} : x \in D \}$ is a subcover of $\mathcal{O}$ with cardinality $|D|$.
However, $|D| \leq e(X)$, i.e.\ $|\mathcal{O}^{*}| \leq e(X)$.
This is true for any open cover $\mathcal{O}$, and hence $e(X)$ is an upper bound on the cardinality of subcovers of open covers so $L(X) \leq e(X)$ which implies $L(X) = e(X)$.
For more results on these cardinals, the reader should consult Gruenhage (2011)$\,^{\text{\cite{gruenhage}}}$.

A slightly different definition of the extent of a space was used in Arhangel'skii and Buzyakova (2002)$\,^{\text{\cite{arhangelskii-buzyakova}}}$.
They defined the extent of the space to be the supremum of all the cardinalities of discrete subsets of $X$.
According to this definition, every aD-space with a countable extent is a Lindel\"of space.
A direct consequence of theorem~\ref{finite-union-of-paracompact-is-ad} is stated as follows.

\begin{corollary}
	Let $(X, \tau)$ be a regular topological space of countable extent, i.e.\ $e(X) = \omega_0$.
	If $X$ is the union of a finite family of paracompact spaces, then $X$ is Lindel\"of.
\end{corollary}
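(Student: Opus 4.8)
The plan is to combine theorem~\ref{finite-union-of-paracompact-is-ad} with a short argument showing that every aD-space of countable extent is Lindel\"of. First, since $X$ is regular and is the union of a finite family of paracompact subspaces, theorem~\ref{finite-union-of-paracompact-is-ad} immediately gives that $X$ is an aD-space. So it remains only to verify the implication ``aD-space together with countable extent implies Lindel\"of'', where here extent is understood in the sense of Arhangel'skii and Buzyakova, namely the supremum of cardinalities of discrete (not necessarily closed) subsets of $X$.

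Next, I would let $\mathcal{O}$ be an arbitrary open cover of $X$ and apply the definition of aD-space to the closed set $F = X$ and the cover $\mathcal{O}$: this yields a discrete subset $A \subseteq X$ together with an assignment $C : A \rightarrow \mathcal{O}$ such that $a \in C(a)$ for each $a \in A$ and $C(A) = \{ C(a) : a \in A \}$ covers $F = X$. Since $A$ is a discrete subset of $X$ and $e(X) = \omega_0$, we get $|A| \leq \omega_0$, so $A$ is countable; consequently $C(A)$ is a countable subfamily of $\mathcal{O}$ that still covers $X$. As $\mathcal{O}$ was arbitrary, $X$ is Lindel\"of, which is exactly what is claimed.

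The only point that requires care --- and it is more a matter of bookkeeping than a genuine obstacle --- is the choice of definition of extent. The set $A$ produced by the aD property is only asserted to be discrete, not closed discrete, so the argument would break if one insisted on the ``closed discrete'' version of extent used elsewhere in this excerpt; one must use the discrete-subset version, exactly as flagged in the paragraph preceding the statement. With that convention fixed, the corollary is an essentially immediate consequence of theorem~\ref{finite-union-of-paracompact-is-ad}, so I would not expect any substantive difficulty beyond stating the two steps cleanly and making the extent convention explicit.
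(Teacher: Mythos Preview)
Your proposal is correct and matches the paper's approach exactly: the paper states the corollary as ``a direct consequence of theorem~\ref{finite-union-of-paracompact-is-ad}'' immediately after remarking that every aD-space with countable extent (in the discrete-subset sense) is Lindel\"of, and you have simply spelled out both steps. Your care with the extent convention is also precisely the point the paper flags in the preceding paragraph.
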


Buzyakova, again, proved another remarkable result in Buzyakova (2004)$\,^{\text{\cite{buzyakova-2}}}$ which states the following.


\begin{theorem}
	For every compact topological space $(X, \tau)$, the space $C_p(X)$ of all real valued continuous functions with the topology of pointwise convergence is hereditary D, that is, every subspace of $C_p(X)$ is a D-space.
\end{theorem}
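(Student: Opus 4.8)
I would argue directly from the definition. Fix an arbitrary subspace $Y\subseteq C_p(X)$ and an open neighbourhood assignment $N$ on $Y$; the aim is to produce a closed discrete $D\subseteq Y$ with $\bigcup_{f\in D}N(f)=Y$, by a transfinite construction in the spirit of the proofs of Theorems~\ref{gls-is-d} and~\ref{semi-start-is-d}. As a routine first step, since $C_p(X)$ carries the subspace topology from $\mathbb{R}^X$, pick for each $f\in Y$ a finite set $K_f\subseteq X$ and an integer $m_f$ with
\[
V(f):=\bigl\{\,g\in Y:\ |g(x)-f(x)|<1/m_f\ \text{for all }x\in K_f\,\bigr\}\ \subseteq\ N(f),
\]
so that it suffices to find a closed discrete $D$ with $\bigcup_{f\in D}V(f)=Y$.

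For the construction I would use an $\in$-chain of elementary submodels rather than a naive greedy enumeration, since a greedy enumeration provably fails: over the convergent sequence $X=\{1/n:n\ge 1\}\cup\{0\}$, the continuous functions $g_i=\chi_{\{1/n_i\}}$ (with $n_i\to\infty$) converge to $0$ in $C_p(X)$, yet for the assignment $V(g_i)=\{h\in Y:|h(1/n_i)-1|<1\}$ no $g_j$ lies in $V(g_i)$ when $i\neq j$, so an enumeration that lists the $g_i$ first would force the non-closed set $\{g_i\}$ into $D$. So: fix a large regular $\theta$, a well-order of $H(\theta)$, and a continuous increasing chain $\langle M_\xi:\xi\le\lambda\rangle$ of elementary submodels of $H(\theta)$, each closed under countable sequences, with $X,Y,N$ and $f\mapsto(K_f,m_f)$ in $M_0$ and $Y\subseteq\bigcup_{\xi}M_\xi$. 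Recursively, if some point of $Y\cap M_{\xi+1}$ is uncovered by $\{V(f_\zeta):\zeta<\xi\}$, let $f_\xi$ be the $<$-least such point; put $D=\{f_\xi:\xi<\lambda\}$. The equality $\bigcup_{f\in D}V(f)=Y$ is the usual reflection argument applied at the least stage $\eta$ with $z\in M_{\eta+1}$ for a would-be uncovered $z$.

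The crux --- and the step I expect to be the main obstacle --- is that $D$ is closed and discrete in $Y$; this is exactly where the compactness of $X$ is essential (the example above shows a merely greedy construction can leave cluster points). The ingredients are: (i) $X$ compact makes every finite power $X^n$ compact, hence Lindel\"of, so $C_p(X)$, and therefore $Y$, has countable tightness; thus any cluster point $z$ of $D$ lies in $\overline{E}$ for some countable $E=\{f_{\xi_k}:k\in\omega\}\subseteq D$; (ii) writing $T=\bigcup_k K_{f_{\xi_k}}$, a countable subset of $X$, the restriction map $f\mapsto f\restriction T$ sends $C_p(X)$ continuously into the \emph{metrizable} space $\mathbb{R}^T$, so on $T$ the situation becomes sequential and metric and some subsequence of $(f_{\xi_k})$ converges to $z$ uniformly on each finite set $K_{f_{\xi_j}}\subseteq T$; (iii) the $M_\xi$ are countably closed, so $E$ --- and with it the relevant data about $z$ on $T$ --- is already captured at some stage of the chain. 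Combining (i)--(iii), after a $\Delta$-system and pigeonhole reduction of the supports $K_{f_{\xi_k}}$ to a fixed finite coordinate set together with a bound on the integers $m_{f_{\xi_k}}$, one wants to conclude that two chosen functions satisfy $f_{\xi_k}\in V(f_{\xi_j})$, contradicting that $f_{\xi_k}$ was chosen uncovered; granting this, $D$ is closed discrete, $\bigcup N(D)=Y$, and $Y$ is a D-space.

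As $Y$ was an arbitrary subspace, $C_p(X)$ is hereditarily a D-space. The genuinely delicate point --- the one I would have to treat with care --- is the last reduction: arranging the recursion and the chain $\langle M_\xi\rangle$ so that a cluster point of $D$ is forced to be ``visible'' to an initial segment of the chain \emph{early enough} that it, or a nearby chosen point, covers one of the $f_{\xi_k}$. Compactness of $X$ enters only through the countable tightness of $C_p(X)$ and through the metrizability of traces on countable coordinate sets; this is the line of argument of Buzyakova~\cite{buzyakova-2}.
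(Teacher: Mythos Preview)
The paper does not prove this theorem at all: it is merely quoted as a result of Buzyakova~\cite{buzyakova-2} in the historical survey of Chapter~\ref{ch:introduction}, with no argument given. So there is no ``paper's own proof'' to compare against; any assessment has to be on the merits of your sketch alone.

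As it stands, what you have written is an outline rather than a proof, and you say so yourself: the passage ``one wants to conclude that two chosen functions satisfy $f_{\xi_k}\in V(f_{\xi_j})$'' is precisely the step that carries the whole argument, and it is not carried out. Ingredients (i)--(iii) are individually correct (compactness of $X$ does give countable tightness of $C_p(X)$ via the Arhangel'skii--Pytkeev theorem, and restriction to a countable $T\subseteq X$ does land in a metrizable space), but they do not combine as easily as you suggest. Convergence of $f_{\xi_k}\!\restriction T$ to $z\!\restriction T$ in $\mathbb{R}^T$ tells you the $f_{\xi_k}$ are eventually close to $z$ on each finite $K_{f_{\xi_j}}$, not that they are close to \emph{each other} on those sets, and a $\Delta$-system/pigeonhole reduction on the supports does not by itself force $|f_{\xi_k}(x)-f_{\xi_j}(x)|<1/m_{f_{\xi_j}}$ on the non-root part of $K_{f_{\xi_j}}$. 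Nor is it clear how countable closure of the $M_\xi$ is meant to interact with the minimality of the chosen $f_\xi$: you need that the cluster point $z$, or something covering it, was available \emph{before} some $f_{\xi_k}$ was chosen, and the sketch does not explain why elementarity delivers this.

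In short: the paper offers nothing to compare with, and your proposal identifies the right obstacles (your own counterexample over the convergent sequence is a good warning) and the right tools, but the decisive step---closing the circle from ``$z$ is a cluster point'' to ``some $f_{\xi_k}$ should already have been covered''---remains a genuine gap. Buzyakova's actual argument is more hands-on and does not go through elementary submodels; if you want to pursue the submodel route you will need a sharper bookkeeping of which data about $z$ and the $f_{\xi_k}$ are reflected into which $M_\xi$, and why that forces the contradiction.
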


More research was conducted on D-spaces and its generalizations afterwards.
An excellent and extensive survey on D-spaces could be found in Gruenhage (2011)$\,^{\text{\cite{gruenhage}}}$.
Some important relatively new results that are not found in Gruenhage (2011)$\,^{\text{\cite{gruenhage}}}$ could be found in Peng (2012)$\,^{\text{\cite{peng-1}}}$, Peng (2017)$\,^{\text{\cite{peng-2}}}$, Soukup and Szeptycki (2012)$\,^{\text{\cite{soukup-szeptycki}}}$, and Zhang and Shi (2012)$\,^{\text{\cite{zhang-shi}}}$.

In the paper Soukup and Szeptycki (2012)$\,^{\text{\cite{soukup-szeptycki}}}$, an example of a Hausdorff Lindel\"of space that doesn't have the property D was constructed by introducing the $\Diamond$ (diamond) axiom/principle which implies the continuum hypothesis, i.e.\ $2^{\omega_0} = \omega_1$ and which is independent of ZFC. For more on $\Diamond$, the reader can consult Jech (2002)$\,^{\text{\cite{jech}}}$.
Still, there is no counterexample of a regular Lindel\"of space which is not a D-space (in ZFC with or without $\Diamond$).

\section{A discussion on two open problems}\label{sec:a-discussion-on-two-open-problems}

We will discuss two problems central to this thesis which are:

\begin{question}\label{qstn:is-lindelof-d}
	Is every (regular) (hereditary) Lindel\"of space a D-space?
\end{question}

\begin{question}
	Is every (regular) (hereditary) paracompact space a D-space?
\end{question}

As mentioned in the previous section, these two questions remain unanswered to this day.
Many attempted to solve the first problem and there is still no clear insight on how one can prove it or rather construct a counterexample.
An approach to the first problem could start in the following manner.

Assume that $(X, \tau)$ is a ($T_1$) Lindel\"of space and let $N:X \rightarrow \tau$ be any open neighborhood assignment.
We will construct a set $K = \{ x_\alpha : \alpha < \gamma \}$ for some ordinal $\gamma$ with the property that $x_\beta \notin N(x_\alpha)$ for all $\alpha < \beta < \gamma$ by transfinite recursion.
At the initial step $0$, we take any point $x_0$ in $X$ and let $K_0 = \{ x_0 \}$.
Now, recursively define $x_\xi$ by taking any point $x_\xi$ in $X \setminus \bigcup\limits_{\alpha < \xi}N(x_\alpha)$.
There is no need to separate the successor step from the limit step of the recursion since they are both the same.
Define $K_\xi$ as $\{x_\alpha : \alpha < \xi\}$, then from the way we constructed $x_\xi$ so that $x_\alpha \neq x_\beta$ for all $\alpha < \beta < \xi$, it is then clear that $K_\alpha \subsetneq K_\beta$ for all $\alpha < \beta < \xi$.
This recursion must end at some ordinal $\gamma$ since $K_\alpha \subseteq X$ and each $K_\alpha$ is different for all $\alpha$.
If it didn't end, then there exists an order isomorphism $\mathcal{C} : \text{\textbf{Ord}} \rightarrow \mathcal{K}$ where $\text{\textbf{Ord}}$ is the class of all ordinals and $\mathcal{K}$ is $\{ K_\alpha: \alpha \in \text{\textbf{Ord}} \}$ ordered by set inclusion $\subseteq$.
This implies that $\mathcal{K}$ is a proper class.
However, $\mathcal{K} \subseteq \mathcal{P}(X)$, and thus $\mathcal{P}(X)$ is a proper class too which contradicts the power set axiom.

Moreover, from the way $K = K_\gamma$ is constructed, we can define a total order relation $\preceq$ on $K$ such that $x_\alpha \preceq x_\beta$ for all $\alpha \leq \beta < \gamma$.
Furthermore, for all $\xi \geq \gamma$, we have $K_\xi = K_\gamma = K$ which means that $X \setminus \bigcup\limits_{\alpha < \xi} N(x_\alpha) = \phi$.
Thus, the family $\{ N(x_\alpha) : \alpha < \gamma\}$ is an open cover of $X$ which by the Lindel\"of property implies that it reduces to a countable subcover $\{N(x): x \in D\}$ where $D \subseteq K$.
What can we say about $D$? Obviously, $D$ is countable.
The mistake we mentioned before in  Borges and Wehrly (1996)$\,^{\text{\cite{borges-wehrly-qa}}}$ was that they had assumed $D$ is order isomorphic to the integers $\mathbb{N}$, i.e.\ $D = \{ x_i : i \in \mathbb{N} \}$ and $x_1 \prec x_2 \prec \dots$ since $D$ is also ordered by $\preceq$.
If so, then the set $D$ becomes a closed discrete subset of $X$ such that $\bigcup N(D) = \bigcup\limits_{d \in D} N(d) = X$.
To prove this take any $x_i$ in $D$, then from the way we constructed each $x_i$, it is clear that $x_{i + j} \notin N(x_i)$ for all $j \in \mathbb{N}$; hence, $U_i = N(x_i) \setminus \{ x_1, \dots, x_{i - 1} \}$ is an open neighborhood of $x_i$ (because $X$ is $T_1$) such that $U_i \cap D = \{ x_i \}$.
This means that $X$ is a D-space.
So the main problem here is obtaining a countable subcover of the constructed set $K$ that is order isomorphic to the integers, i.e.\ we don't want to have $x_i \prec x_j$ for some $i > j$, e.g.\ we don't want to have $x_3 \prec x_1$.

Another possible approach to the first question is by answering question~\ref{arhangelskii-question} of Arhangel'skii.
Since this approach was discussed in a private communication, we don't know how Arhangel'skii sees that the solution of question~\ref{arhangelskii-question} could lead to a solution of question~\ref{qstn:is-lindelof-d}.
Our assumption to what he had in mind is that given any regular Lindel\"of space $(Y, \tau_Y)$, if it possible to construct another space $(X, \tau_X)$ such that $(X, \tau_X)$ is both Lindel\"of and D, and if it there is a continuous map from $(X, \tau_X)$ onto $(Y, \tau_Y)$, then as a consequence, $(Y, \tau_Y)$ becomes a D-space (if it were true that a continuous image of a Lindel\"of D-space is a D-space).

Regarding the second question, a possible approach as proposed by Gruenhage \linebreak (2011)$\,^{\text{\cite{gruenhage}}}$, is to construct for each open neighborhood assignment $N: X \rightarrow \tau$ a locally finite open refinement $\mathcal{O}_r$ of $\mathcal{O} = \{ N(x) : x \in X \}$ such that for each $U$ in $\mathcal{O}_r$ there is a point $x_U$ in $X$ for which $x_U \in U \subseteq N(x_U)$.
If this construction is possible, then $X$ is immediately a paracompact D-space.
To prove this, let $D = \{ x_U : U \in \mathcal{O}_r \}$.
It is clear that $\bigcup N(D) = \bigcup\limits_{d \in D} N(d) = X$ since $\mathcal{O}_r$ is an open cover of $X$ and $U \subseteq N(x_U)$ for all $U$.
It remains to show that $D$ is closed and discrete in $X$.
Take any point $x_{U_0}$ in $D$, then from the fact that $\mathcal{O}_r$ is a locally finite, there exists an open neighborhood $V_{U_0}$ of $x_{U_0}$ such that $V_{U_0}$ intersects at most finitely many open sets in $\mathcal{O}_r$.
Hence, $V_{U_0}$ contains at most finitely many points from $D$, say $x_{U_1}$, \dots, $x_{U_n}$.
Therefore, $V_{U_0}^{*} = V_{U_0} \setminus \left( \{ x_{U_1}, \dots, x_{U_n} \} \setminus \{ x_{U_0} \} \right)$ is an open neighborhood of $x_{U_0}$ which intersects $D$ in only $x_{U_0}$.
Since $x_{U_0}$ was chosen arbitrarily, then $D$ is a discrete set.
Similarly, to prove that $D$ is closed, take any $x_0$ in $X \setminus D$.
There exists an open neighborhood $V_0$ of $x_0$ such that $V_0$ intersects at most finitely many open sets in $\mathcal{O}_r$, and hence contains at most contains at most finitely many points from $D$, say $x_{U_1}$, \dots, $x_{U_n}$.
We can ensure that $x_0 \notin \{ x_{U_1}, \dots, x_{U_n} \}$ since $x_0 \in X \setminus D$, and thus $V_{0}^{*} = V_{0} \setminus \left( \{ x_{U_1}, \dots, x_{U_n} \} \setminus \{ x_0 \} \right)$ is an open neighborhood of $x_0$ such that $V_{0}^{*} \subseteq X \setminus D$.
This ends the proof.

As a note here, if it is possible to prove that every regular paracompact space is a D-space, then a direct consequence of this is that every regular Lindel\"of space is also a D-space (see Engelking, 1989)$\,^{\text{\cite{engelking}}}$.
This is true since every regular Lindel\"of space is actually a paracompact space.
However, it seems harder to show whether every regular paracompact space is a D-space.

The previous discussion was concerning the approaches to the affirmative answer of each of the two questions.
However, it is not yet clear how one should go about constructing a counterexample for the negative answer of each question.
But we know that if the answer to the first question is no, then the answer is no as well to the second question.
Perhaps, the current axioms of Set Theory (ZFC) are insufficient for such constructions.
Furthermore, since both regular Lindel\"of and regular paracompact implies normal, then the separation property is already strong and introducing a stronger separation property may not help.
In other words, even if a proof was provided for a stronger separation property, it will still be interesting to know the answer when the separation property is regular/normal.

In chapter~\ref{ch:a-new-approach}, we will introduce a new approach that might give us a better insight into these problems.
The key theme in our approach is our emphasis on studying continuous maps that are related to covering properties in general, and to D-spaces in particular.
We start by introducing generalizations to the neighborhood assignments $N : X \rightarrow \tau$ and then obtain a special continuous map, that we call, the companion map using a specific topology, that we call, the principal ultrafilter topology (or puf topology for short).
We then study some categorical constructions in \textbf{Top} related to these maps and to the puf topology.

In chapter~\ref{ch:applications-to-the-theory-of-d-spaces}, we will use companion maps to obtain an equivalent definition of D-spaces that is central to this thesis.
After that, we will discuss our attempts in solving question~\ref{qstn:is-lindelof-d} and obtain other results that could help in constructing counterexamples.
Finally, we will apply companion maps to see if we can find connections between D-spaces and some other covering properties like paracompactness and metacompactness.

\clearpage
\chapter{A new approach}\label{ch:a-new-approach}
\section{Preliminaries}\label{sec:preliminaries}

Observe that for a topological space $(X, \tau)$, any open neighborhood assignment $N : X \rightarrow \tau$ could be seen as an open cover indexed by the set $X$, i.e.\ as $\{ U_x = N(x) : x \in X \}$.
This family is indeed an open cover since every point is covered by a neighborhood, and thus the entire space is covered.
Conversely, any open cover of the form $\{U_x : x \in X \}$ satisfying $x \in U_x$ for all $x$ in $X$ could be seen as an open neighborhood assignment.
Generally speaking, an open cover is a family of open sets of $(X, \tau)$ that covers $X$ and which is indexed by some set $A$; thus, it has the form $\mathcal{O} = \{ U_a : a \in A \}$.
By analogy, we can construct a function $C : A \rightarrow \tau$ such that $C(a) = U_a$.
We will call this function an \textit{open covering assignment} and it must satisfy $\bigcup C(A) = \bigcup\limits_{a
\in A}{C(a)} = X$.
In general, a function ${ O : A \rightarrow \tau }$ that doesn't necessarily satisfy $\bigcup O(A) = X$ will be called an \textit{open set assignment}.
To study the relations between covering properties and the property D, we will introduce a new approach utilizing special continuous topological mappings;
however, we must first introduce some important concepts and notation.

\section{The principal ultrafilter topology}\label{sec:the-principal-ultrafilter-topology}

For any set $X$, the set $\mathcal{P}(X)$ together with the relation $\subseteq$ forms a structure known as a \textit{poset}.
A subset $\mathcal{F}$ of $\mathcal{P}(X)$ is called a \textit{filter} (Jech 2002)$\,^{\text{\cite{jech}}}$ if it has the following properties:


\begin{enumerate}
	\item $\mathcal{F}$ is nonempty.
	\item For any sets $A$ and $B$ in $\mathcal{F}$, there is a set $C$ in $\mathcal{F}$ such that $C \subseteq A$ and $C \subseteq B$.
	\item For every set $A$ in $\mathcal{F}$ and every set $B$ in $\mathcal{P}(X)$, $A \subseteq B$ implies that $B$ is in $\mathcal{F}$ too.
\end{enumerate}

An \textit{ultrafilter} on a poset $(\mathcal{P}(X), \subseteq)$ is a maximal filter on this poset, that is, there is no filter $\mathcal{F}\,'$ such that $\mathcal{F} \subsetneq \mathcal{F}\,' \subsetneq \mathcal{P}(X)$.
A \textit{principal filter} generated by a set $A$ on $\mathcal{P}(X)$ is a filter constructed by taking all sets in $\mathcal{P}(X)$ containing $A$, i.e.\ the set $\{B \in \mathcal{P}(X) : A \subseteq B \}$.
A \textit{principal ultrafilter} on a poset $(\mathcal{P}(X), \subseteq)$ is a maximal principal filter on this poset;
thus, any principal ultrafilter in this poset is a principal filter generated by a singleton $\{x\}$ for some $x$ in $X$.
Hence, for every point $x$ in $X$, we can obtain the principal ultrafilter
\[
\mathcal{U}(x) = \{ B \in \mathcal{P}(X) : \{ x \} \subseteq B \}
\]
where $\mathcal{U}(x)$ will be used to denote the principal ultrafilter generated by the singleton $\{x\}$ of the point $x$ in $X$ and unless the set $X$ is known from the context, we will use the notation $\mathcal{U}_{X}(x)$ otherwise.
Therefore, for any set $X$, we obtain a set function
${ \mathcal{S}: X \rightarrow \mathcal{P} \left( \mathcal{P}(X) \right) }$ defined as $\mathcal{S}(x) = \mathcal{U}(x)$, i.e.\ takes each point of $X$ to its principal ultrafilter on ${ (\mathcal{P}(X), \subseteq) }$.
Similarly, unless the set $X$ is known from the context, we will use the notation ${ \mathcal{S}_{X}(x) = \mathcal{U}_{X}(x) }$.
The image of this function, i.e.\ $\mathcal{S}(X)$ is the set of all principal ultrafilters on the poset $(\mathcal{P}(X), \subseteq)$.
This brings us to the following definition.

\begin{definition}
	Let $X$ be any nonempty set, the \textbf{principal ultrafilter topology} (abbreviated as ``puf topology'') on $\mathcal{P}(X)$ is the topology generated by the subbase $\mathcal{S}(X)$.
	This topology will be denoted as $\tau_{\mathcal{S}(X)}$ or just as $\tau_{\mathcal{S}}$ when $X$ is known from the context, i.e.\ we have the topological space $(\mathcal{P}(X), \tau_{\mathcal{S}})$.
\end{definition}


\section{The companion map}\label{sec:the-companion-map}

In the following proposition, we introduce a special map that will be used frequently later on.

\begin{proposition}\label{open-set-assignment-to-continuous-map}
	Let $(X, \tau)$ be a topological space and $A$ be any set.
	A function \linebreak ${ O : A \rightarrow \mathcal{P}(X) }$ is an open set assignment iff there exists a continuous map
	\[
		f : (X, \tau) \rightarrow \left( \mathcal{P}(A), \tau_{\mathcal{S}(A)} \right)
	\]
	satisfying $f^{-1}(\mathcal{U}_{A}(a)) = O(a)$ for all $a$ in $A$.
\end{proposition}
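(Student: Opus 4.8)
The plan is to prove the equivalence by writing down the canonical map attached to an open set assignment and verifying continuity through the subbase criterion, so that a single computation settles both directions. For the implication ``open set assignment $\Rightarrow$ continuous map'', suppose $O : A \to \tau$, i.e.\ each $O(a)$ is open in $X$. I would define $f : X \to \mathcal{P}(A)$ by
\[
	f(x) = \{\, a \in A : x \in O(a) \,\}.
\]
This is well defined because $f(x)$ is a subset of $A$, hence an element of $\mathcal{P}(A)$. For every $a \in A$ one computes $f^{-1}(\mathcal{U}_{A}(a)) = \{\, x \in X : a \in f(x) \,\} = \{\, x \in X : x \in O(a) \,\} = O(a)$, which gives the required identity and, at the same time, shows that the preimage of each member of the subbase $\mathcal{S}(A)$ is open in $X$. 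Since a map into a space with a given subbase is continuous as soon as the preimages of all subbasic open sets are open, $f$ is continuous.

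For the converse, suppose $f : (X,\tau) \to (\mathcal{P}(A), \tau_{\mathcal{S}(A)})$ is continuous and $f^{-1}(\mathcal{U}_{A}(a)) = O(a)$ for all $a \in A$. Each $\mathcal{U}_{A}(a)$ lies in the subbase $\mathcal{S}(A)$, hence is open in $\tau_{\mathcal{S}(A)}$, so by continuity $O(a) = f^{-1}(\mathcal{U}_{A}(a))$ is open in $X$; that is, $O$ takes values in $\tau$ and is therefore an open set assignment.

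I would also record, although it is not needed for the stated equivalence, that the map produced above is the \emph{only} one with the stated property: the condition $f^{-1}(\mathcal{U}_{A}(a)) = O(a)$ forces $a \in f(x) \Leftrightarrow x \in O(a)$ for every $x \in X$ and $a \in A$, so $f(x) = \{\, a \in A : x \in O(a) \,\}$ is completely determined by $O$. This uniqueness is what will justify speaking of \emph{the} companion map of $O$ later on.

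There is no genuine obstacle in this argument; the only points that require a little care are recognising that the correct map is the ``indicator-set'' assignment $x \mapsto \{\, a : x \in O(a) \,\}$ rather than anything more elaborate, and invoking the standard fact that continuity need only be checked on a subbase, so that the single identity $f^{-1}(\mathcal{U}_{A}(a)) = O(a)$ does all the work in both directions simultaneously.
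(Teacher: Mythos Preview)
Your proof is correct and follows essentially the same route as the paper: define $f(x)=\{a\in A:x\in O(a)\}$, verify $f^{-1}(\mathcal{U}_A(a))=O(a)$ via the chain $x\in O(a)\Leftrightarrow a\in f(x)\Leftrightarrow f(x)\in\mathcal{U}_A(a)$, and invoke the subbase criterion for continuity, with the converse being immediate from continuity. Your additional uniqueness remark anticipates exactly what the paper establishes separately in the subsequent corollary.
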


\begin{proof}
	Assume that $O : A \rightarrow \mathcal{P}(X)$ is an open set assignment, then $O(a) \in \tau$ for all $a$ in $A$.
	Define a function $f : X \rightarrow \mathcal{P}(A)$ as
	\[
	f(x) = \{ a \in A : x \in O(a) \}
	\]
	i.e.\ it takes each point to the set of all indices of the open sets in which the point lies.
	Moreover, each set of indices lies in every principal ultrafilter generated by each index it contains.
	Since every subbasic open set in $(\mathcal{P}(A), \tau_{\mathcal{S}})$ is of the form $\mathcal{U}(a)$ for some $a \in A$, then to prove that $f$ is a continuous map it suffices to show $f^{-1}(\mathcal{U}(a)) = O(a)$.
	Consider the following,
	\begin{align*}
    	\begin{aligned}
			x \in O(a) & \iff a \in f(x) \\
			& \iff f(x) \in \mathcal{U}(a) \\
			& \iff x \in f^{-1}(\mathcal{U}(a))
      	\end{aligned}
	\end{align*}
	which proves the sufficiency part.
	Conversely, assume that there exists a continuous map $f : (X, \tau) \rightarrow (\mathcal{P}(A), \tau_{\mathcal{S}})$ satisfying $f^{-1}(\mathcal{U}(a)) = O(a)$.
	Since $f$ is continuous and $\mathcal{U}(a) \in \tau_{\mathcal{S}}$ for all $a$ in $A$, then $O(a) = f^{-1}(\mathcal{U}(a)) \in \tau$ for all $a$ in $A$.
	Hence, $O$ is an open set assignment.
\end{proof}


\begin{proposition}\label{open-covering-assignment-to-continuous-map}
	Let $(X, \tau)$ be a topological space and $A$ be any set.
	A function \linebreak ${ C : A \rightarrow \mathcal{P}(X) }$ is an open covering assignment iff there exists a continuous map
	\[
		f : (X, \tau) \rightarrow \left( \mathcal{P}(A), \tau_{\mathcal{S}(A)} \right)
	\]
	satisfying
	\begin{enumerate}
		\item $f^{-1}(\mathcal{U}_{A}(a)) = C(a)$ for all $a$ in $A$ \label{open-covering-assignment-to-continuous-map-1}
		\item $f^{-1}\left(\bigcup \mathcal{S}(A)\right) = X \iff f^{-1}(\{ \phi \}) = \phi$ \label{open-covering-assignment-to-continuous-map-2}
	\end{enumerate}
\end{proposition}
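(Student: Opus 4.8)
The plan is to reduce the claim to Proposition~\ref{open-set-assignment-to-continuous-map}, since an open covering assignment is nothing but an open set assignment whose values union up to all of $X$. The one genuinely new ingredient needed is a description of the set $\bigcup\mathcal{S}(A)$ as a subset of $\mathcal{P}(A)$.

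First I would establish the identity
\[
	\bigcup\mathcal{S}(A) \;=\; \mathcal{P}(A)\setminus\{\phi\},
\]
which holds because a subset $B\subseteq A$ belongs to some principal ultrafilter $\mathcal{U}_A(a)$ precisely when $B$ contains at least one element of $A$, i.e.\ precisely when $B\neq\phi$. A consequence is that for \emph{any} function $f:X\to\mathcal{P}(A)$ we have $f^{-1}\left(\bigcup\mathcal{S}(A)\right) = X\setminus f^{-1}(\{\phi\})$, so the biconditional in item~\ref{open-covering-assignment-to-continuous-map-2} is automatically valid; what that item really demands is that the common truth value of its two sides be affirmative. I would also record the elementary fact that taking preimages commutes with unions, giving $f^{-1}\left(\bigcup\mathcal{S}(A)\right) = \bigcup_{a\in A} f^{-1}(\mathcal{U}_A(a))$.

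For the forward implication, I would assume $C$ is an open covering assignment, note that it is in particular an open set assignment, and invoke Proposition~\ref{open-set-assignment-to-continuous-map} to obtain a continuous map $f:(X,\tau)\to\left(\mathcal{P}(A),\tau_{\mathcal{S}(A)}\right)$ with $f^{-1}(\mathcal{U}_A(a)) = C(a)$ for every $a\in A$ — this is item~\ref{open-covering-assignment-to-continuous-map-1}. Combining it with the two observations above gives $f^{-1}\left(\bigcup\mathcal{S}(A)\right) = \bigcup_{a\in A} C(a) = X$, which is item~\ref{open-covering-assignment-to-continuous-map-2}. For the converse, I would start from such an $f$: item~\ref{open-covering-assignment-to-continuous-map-1} together with Proposition~\ref{open-set-assignment-to-continuous-map} already forces $C$ to be an open set assignment, and item~\ref{open-covering-assignment-to-continuous-map-2} yields $\bigcup C(A) = \bigcup_{a\in A} f^{-1}(\mathcal{U}_A(a)) = f^{-1}\left(\bigcup\mathcal{S}(A)\right) = X$, so $C$ is an open covering assignment.

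The whole argument is essentially bookkeeping layered on top of Proposition~\ref{open-set-assignment-to-continuous-map}, so I do not expect a real obstacle; the step most worth stating carefully is the identity $\bigcup\mathcal{S}(A) = \mathcal{P}(A)\setminus\{\phi\}$ and the reading of item~\ref{open-covering-assignment-to-continuous-map-2} it forces. The only other point to dispatch, in a single line, is the degenerate case $A=\phi$, where $\mathcal{P}(A)=\{\phi\}$ and $\mathcal{S}(A)=\phi$, so the biconditional in item~\ref{open-covering-assignment-to-continuous-map-2} and the existence of an open covering assignment both reduce to the statement ``$X=\phi$''.
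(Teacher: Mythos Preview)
Your proposal is correct and follows essentially the same route as the paper: reduce to Proposition~\ref{open-set-assignment-to-continuous-map} for item~\ref{open-covering-assignment-to-continuous-map-1}, then use $f^{-1}\bigl(\bigcup\mathcal{S}(A)\bigr)=\bigcup_{a\in A}f^{-1}(\mathcal{U}_A(a))=\bigcup_{a\in A}C(a)$ in both directions to handle the covering condition. Your explicit identification $\bigcup\mathcal{S}(A)=\mathcal{P}(A)\setminus\{\phi\}$, your remark on how item~\ref{open-covering-assignment-to-continuous-map-2} should be read, and your treatment of $A=\phi$ are clarifications the paper omits, but the underlying argument is the same.
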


\begin{proof}
	Assume that $C : A \rightarrow \mathcal{P}(X)$ is an open covering assignment, then $C(a) \in \tau$ for all $a$ in $A$ and $\bigcup C(A) = X$.
	Since $C$ is also an open set assignment, then as a consequence of proposition~\ref{open-set-assignment-to-continuous-map}, there exists a continuous map $f : (X, \tau) \rightarrow \left( \mathcal{P}(A), \tau_{\mathcal{S}} \right)$ satisfying~\ref{open-covering-assignment-to-continuous-map-1}.
	Moreover,
	\begin{align*}
		\begin{aligned}
			X = \bigcup\limits_{a \in A}{C(a)} & = \bigcup\limits_{a \in A} f^{-1}(\mathcal{U}(a)) \\
			& = f^{-1} \left( \bigcup\limits_{a \in A}{\mathcal{U}(a)} \right) \\
			& = f^{-1} \left( \bigcup \mathcal{S}(A) \right)
		\end{aligned}
	\end{align*}
	Hence,~\ref{open-covering-assignment-to-continuous-map-2} follows.
	Conversely, assume that there exists a continuous map \linebreak ${ f : (X, \tau) \rightarrow \left( \mathcal{P}(A), \tau_{\mathcal{S}} \right) }$ satisfying both~\ref{open-covering-assignment-to-continuous-map-1} and~\ref{open-covering-assignment-to-continuous-map-2}.
	Again, as a consequence of proposition~\ref{open-set-assignment-to-continuous-map}, $C$ is an open set assignment.
	It remains to show that $\bigcup C(A) = X$.
	Consider the following,
	\begin{align*}
		\begin{aligned}
			X = f^{-1} \left( \bigcup \mathcal{S}(A) \right) & = f^{-1} \left( \bigcup\limits_{a \in A} \mathcal{U}(a) \right) \\
			& = \bigcup\limits_{a \in A} f^{-1} \left( \mathcal{U}(a) \right) \\
			& = \bigcup\limits_{a \in A} C(a)
		\end{aligned}
	\end{align*}
	Therefore, $C$ is an open covering assignment.
\end{proof}


\begin{proposition}\label{neighborhood-assignment-to-continuous-map}
	Let $(X, \tau)$ be a topological space and $A$ be any set.
	A function \linebreak ${ N : A \rightarrow \mathcal{P}(X) }$ is an open neighborhood assignment iff there exists a continuous map
	\[
		f : (X, \tau) \rightarrow \left( \mathcal{P}(X), \tau_{\mathcal{S}(X)} \right)
	\]
	satisfying
	\begin{enumerate}
		\item $f^{-1}\left(\mathcal{U}_{X}(x)\right) = N(x)$ for all $x$ in $X$ \label{neighborhood-assignment-to-continuous-map-1}
		\item $f(x) \in \mathcal{U}_{X}(x)$ for all $x$ in $X$ \label{neighborhood-assignment-to-continuous-map-2}
	\end{enumerate}
\end{proposition}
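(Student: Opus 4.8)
The plan is to read this proposition as the case $A = X$ of Proposition~\ref{open-set-assignment-to-continuous-map}, together with a translation of the single extra axiom distinguishing an open neighborhood assignment from an arbitrary open set assignment. Recall that $N : X \rightarrow \mathcal{P}(X)$ is an open neighborhood assignment precisely when it is an open set assignment (so $N(x) \in \tau$ for all $x$) \emph{and} it satisfies the reflexivity condition $x \in N(x)$ for every $x \in X$. So the ``open set'' content on both sides is already taken care of by Proposition~\ref{open-set-assignment-to-continuous-map} applied with the index set equal to $X$ itself: that proposition gives a continuous $f : (X,\tau) \rightarrow (\mathcal{P}(X), \tau_{\mathcal{S}(X)})$ with $f^{-1}(\mathcal{U}_X(x)) = N(x)$ for all $x$, which is exactly condition~\ref{neighborhood-assignment-to-continuous-map-1}, and conversely any such $f$ forces $N$ to be an open set assignment.

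The only remaining thing to verify is that, once condition~\ref{neighborhood-assignment-to-continuous-map-1} is in force, the reflexivity condition $x \in N(x)$ for all $x$ is equivalent to condition~\ref{neighborhood-assignment-to-continuous-map-2}, namely $f(x) \in \mathcal{U}_X(x)$ for all $x$. This is a purely formal unwinding of definitions: for a fixed $x$, membership $f(x) \in \mathcal{U}_X(x)$ means $\{x\} \subseteq f(x)$, i.e.\ $x \in f(x)$, and since by definition of preimage $f(x) \in \mathcal{U}_X(x) \iff x \in f^{-1}(\mathcal{U}_X(x))$, condition~\ref{neighborhood-assignment-to-continuous-map-1} rewrites this as $x \in N(x)$. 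Equivalently, one can simply specialize the chain of equivalences displayed in the proof of Proposition~\ref{open-set-assignment-to-continuous-map} to $a = x$, using the explicit witness $f(x) = \{\, y \in X : x \in N(y)\,\}$, to get $x \in N(x) \iff x \in f(x) \iff f(x) \in \mathcal{U}_X(x)$.

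Assembling the two halves then finishes the proof. For the forward direction, if $N$ is an open neighborhood assignment, Proposition~\ref{open-set-assignment-to-continuous-map} produces a continuous $f$ satisfying~\ref{neighborhood-assignment-to-continuous-map-1}, and the equivalence just noted promotes the hypothesis $x \in N(x)$ to~\ref{neighborhood-assignment-to-continuous-map-2}. For the converse, given a continuous $f$ satisfying~\ref{neighborhood-assignment-to-continuous-map-1} and~\ref{neighborhood-assignment-to-continuous-map-2}, Proposition~\ref{open-set-assignment-to-continuous-map} makes $N$ an open set assignment, and~\ref{neighborhood-assignment-to-continuous-map-2} combined with~\ref{neighborhood-assignment-to-continuous-map-1} yields $x \in N(x)$ for every $x$, so $N$ is in fact an open neighborhood assignment. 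I do not expect a genuine obstacle here; the one place to be careful is to keep the translation ``$x \in N(x)$'' $\leftrightarrow$ ``$f(x) \in \mathcal{U}_X(x)$'' stated pointwise in $x$ and only then universally quantified, rather than conflating it with the set-level identity $f^{-1}(\mathcal{U}_X(x)) = N(x)$.
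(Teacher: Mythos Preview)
Your proposal is correct and matches the paper's own proof essentially line for line: both directions invoke Proposition~\ref{open-set-assignment-to-continuous-map} with index set $X$ to handle condition~\ref{neighborhood-assignment-to-continuous-map-1}, and then use the chain $f(x) \in \mathcal{U}_X(x) \iff x \in f^{-1}(\mathcal{U}_X(x)) \iff x \in N(x)$ to pass between condition~\ref{neighborhood-assignment-to-continuous-map-2} and the reflexivity requirement. Your reading of the stray ``$A$'' in the hypothesis as $A = X$ is exactly how the paper treats it.
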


\begin{proof}
	Assume that $N : A \rightarrow \mathcal{P}(X)$ is an open neighborhood assignment, then it is an open covering assignment and an open set assignment with the domain $X$; thus, as a consequence of proposition~\ref{open-set-assignment-to-continuous-map} there exists a continuous map $f : (X, \tau) \rightarrow \left( \mathcal{P}(X), \tau_{\mathcal{S}} \right)$ satisfying~\ref{neighborhood-assignment-to-continuous-map-1}.
	To prove~\ref{neighborhood-assignment-to-continuous-map-2}, we will use the fact that $x \in N(x)$ for all $x$ in $X$; hence by~\ref{neighborhood-assignment-to-continuous-map-1}, $x \in f^{-1}(\mathcal{U}(x))$.
	Therefore, $f(x) \in \mathcal{U}(x)$.
	Conversely, assume that there exists a continuous map ${ f : (X, \tau) \rightarrow \left( \mathcal{P}(X), \tau_{\mathcal{S}} \right) }$ satisfying both~\ref{neighborhood-assignment-to-continuous-map-1} and~\ref{neighborhood-assignment-to-continuous-map-2}.
	Again, as a consequence of proposition~\ref{open-set-assignment-to-continuous-map}, $N$ is an open set assignment.
	It remains to show that $x \in N(x)$ for all $x$ in $X$.
	From~\ref{neighborhood-assignment-to-continuous-map-2}, $x \in f^{-1}(\mathcal{U}(x))$ for all $x$ in $X$; thus, from~\ref{neighborhood-assignment-to-continuous-map-1}, we conclude that $N$ is an open neighborhood assignment.
\end{proof}

\begin{corollary}\label{open-set-assignment-to-continuous-map-is-unique}
	Let $(X, \tau)$ be a topological space, $A$ be any set, and ${ O : A \rightarrow \tau }$ be an open set assignment.
	There exists a \textbf{unique} continuous map
	\[
	f : (X, \tau) \rightarrow \left( \mathcal{P}(A), \tau_{\mathcal{S}(A)} \right)
	\]
	satisfying $f^{-1}\left(\mathcal{U}_{A}(a)\right) = O(a)$ for all $a$ in $A$.
\end{corollary}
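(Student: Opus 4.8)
The plan is to reduce the corollary to Proposition~\ref{open-set-assignment-to-continuous-map}, which already gives \emph{existence} of such an $f$; what remains is \emph{uniqueness}. First I would invoke Proposition~\ref{open-set-assignment-to-continuous-map} directly: since $O$ is an open set assignment, there is at least one continuous map $f : (X,\tau) \rightarrow (\mathcal{P}(A), \tau_{\mathcal{S}(A)})$ with $f^{-1}(\mathcal{U}_A(a)) = O(a)$ for all $a \in A$. So the content of the corollary is the word \textbf{unique}, and the whole proof should be the argument that any two such maps coincide.

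The key observation is that the puf topology is, by construction, generated by the subbase $\mathcal{S}(A) = \{\mathcal{U}_A(a) : a \in A\}$, and more importantly that each point of $\mathcal{P}(A)$ is completely pinned down by which subbasic sets it belongs to. Concretely, for a subset $S \in \mathcal{P}(A)$ and an index $a \in A$ we have $S \in \mathcal{U}_A(a)$ iff $a \in S$; hence $S = \{a \in A : S \in \mathcal{U}_A(a)\}$. This means the family $\mathcal{S}(A)$ \emph{separates points} of $\mathcal{P}(A)$ in the strong sense that $S$ is recovered from its ``membership pattern'' across the $\mathcal{U}_A(a)$'s. So suppose $f$ and $g$ are two continuous maps satisfying the condition. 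Fix $x \in X$. For every $a \in A$ we have
\begin{align*}
f(x) \in \mathcal{U}_A(a) &\iff x \in f^{-1}(\mathcal{U}_A(a)) = O(a) \\
&\iff x \in g^{-1}(\mathcal{U}_A(a)) \iff g(x) \in \mathcal{U}_A(a).
\end{align*}
Thus $f(x)$ and $g(x)$ lie in exactly the same subbasic open sets, and by the recovery identity above, $f(x) = \{a : f(x) \in \mathcal{U}_A(a)\} = \{a : g(x) \in \mathcal{U}_A(a)\} = g(x)$. Since $x$ was arbitrary, $f = g$.

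There is essentially no obstacle here; the only thing to be careful about is making the ``recovery identity'' $S = \{a \in A : S \in \mathcal{U}_A(a)\}$ explicit, since it is the pivot of the whole argument and it rests entirely on the elementary fact that $\mathcal{U}_A(a) = \{B \in \mathcal{P}(A) : a \in B\}$. I would state that identity as a one-line remark and then run the two-map comparison above. One might also note that continuity of $f$ and $g$ is not actually used in the uniqueness half — only the defining equations $f^{-1}(\mathcal{U}_A(a)) = O(a) = g^{-1}(\mathcal{U}_A(a))$ — but it is cleanest simply to cite Proposition~\ref{open-set-assignment-to-continuous-map} for existence and then give this short separation-of-points computation for uniqueness.
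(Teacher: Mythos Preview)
Your proposal is correct and follows essentially the same approach as the paper: cite Proposition~\ref{open-set-assignment-to-continuous-map} for existence, then use the fact that membership of a set $S \in \mathcal{P}(A)$ in the subbasic sets $\mathcal{U}_A(a)$ completely determines $S$ to force $f(x)=g(x)$. The only cosmetic difference is that the paper phrases the uniqueness step as a proof by contradiction (pick $x$ with $f(x)\neq g(x)$, then some $a$ lies in one but not the other, contradicting $f^{-1}(\mathcal{U}(a))=O(a)=g^{-1}(\mathcal{U}(a))$), whereas you give the equivalent direct argument; your additional remark that continuity is not needed for the uniqueness half is a nice observation not made explicit in the paper.
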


\begin{proof}
	As a direct consequence of proposition~\ref{open-set-assignment-to-continuous-map}, a continuous map \linebreak ${ f : (X, \tau) \rightarrow \left( \mathcal{P}(A), \tau_{\mathcal{S}} \right) }$ exists which satisfies $f^{-1}(\mathcal{U}(a)) = O(a)$ for all $a$ in $A$.
	\linebreak To prove its uniqueness, assume that $f$ is not unique, i.e.\ assume that there exists a different continuous map $g : (X, \tau) \rightarrow \left( \mathcal{P}(A), \tau_{\mathcal{S}} \right)$ satisfying $g^{-1}(\mathcal{U}(a)) = O(a)$ for all $a$ in $A$.
	We mean by $g$ being a different map from $f$ that there exists some $x$ in $X$ such that $g(x) \neq f(x)$.
	Assume without loss of generality that there exists some $a$ in $A$ such that $a \in g(x)$ and $a \notin f(x)$.
	Hence, $g(x) \in \mathcal{U}(a)$ and $f(x) \notin \mathcal{U}(a)$ which implies that $x \in g^{-1}(\mathcal{U}(a))$ and $x \notin f^{-1}(\mathcal{U}(a))$.
	However, $g^{-1}(\mathcal{U}(a)) = O(a) = f^{-1}(\mathcal{U}(a))$, a contradiction.
\end{proof}

Corollary~\ref{open-set-assignment-to-continuous-map-is-unique} implies that the continuous map satisfying the necessary condition in proposition~\ref{open-covering-assignment-to-continuous-map} is also unique.
Similarly, the continuous map satisfying the necessary condition in proposition~\ref{neighborhood-assignment-to-continuous-map} is unique too.
The continuous map that is associated with any one of these assignments (open set assignment, open covering assignment, open neighborhood assignment) will be called the \textit{companion map} of the associated assignment.
According to this result and according to the argument of proposition~\ref{open-set-assignment-to-continuous-map}, the map $f : (X, \tau) \rightarrow (\mathcal{P}(A), \tau_{\mathcal{S}})$ defined as $f(x) = \{ a \in A : x \in O(a) \}$ is the companion map of any open set assignment $O : A \rightarrow \tau$.

\section{Subfamilies and refinements}\label{sec:subfamilies-and-refinements}

Topological properties that are related to covers of a space (usually open covers) are referred to as covering properties.
Covering properties imply the existence of certain subcovers or refinements of open covers.
Analogously, for companion maps we will obtain similar notions to subcovers and refinements of open covers as follows.

\begin{proposition}\label{subfamily-map}
	Let $(X, \tau)$ be a topological space and $O : A \rightarrow \tau$ be an open set assignment.
	Suppose that $f : (X, \tau) \rightarrow (\mathcal{P}(A), \tau_{\mathcal{S}(A)})$ is the companion map of $O$, then for any set $D \subseteq A$, the restriction of the open set assignment $O$ to $D$ (i.e.\ $O\restriction_{D} : D \rightarrow \tau$) is associated with the companion map $g : (X, \tau) \rightarrow (\mathcal{P}(D), \tau_{\mathcal{S}(D)})$ defined as $g(x) = f(x) \cap D$.
\end{proposition}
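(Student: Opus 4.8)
The plan is to verify directly that the map $g : X \to (\mathcal{P}(D), \tau_{\mathcal{S}(D)})$ defined by $g(x) = f(x) \cap D$ is precisely the companion map of $O\restriction_D$, by appealing to the explicit description of companion maps established after Corollary~\ref{open-set-assignment-to-continuous-map-is-unique}. Recall that the companion map of an open set assignment $P : B \to \tau$ is the unique continuous map $h : X \to (\mathcal{P}(B), \tau_{\mathcal{S}(B)})$ with $h(x) = \{ b \in B : x \in P(b) \}$. So it suffices to show that $g(x) = \{ d \in D : x \in O(d) \}$ for every $x \in X$, since uniqueness (Corollary~\ref{open-set-assignment-to-continuous-map-is-unique}) then forces $g$ to be the companion map of $O\restriction_D$.

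First I would unwind the definition: since $f$ is the companion map of $O$, we have $f(x) = \{ a \in A : x \in O(a) \}$. Intersecting with $D \subseteq A$ gives
\[
g(x) = f(x) \cap D = \{ a \in A : x \in O(a) \} \cap D = \{ d \in D : x \in O(d) \},
\]
and the right-hand side is exactly $\{ d \in D : x \in (O\restriction_D)(d) \}$ because $O\restriction_D$ and $O$ agree on $D$. This is the set-theoretic content of the claim.

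Next I would confirm that $g$ actually lands in $\mathcal{P}(D)$ and is continuous into $(\mathcal{P}(D), \tau_{\mathcal{S}(D)})$, so that it qualifies as a companion map in the first place. That $g(x) \subseteq D$ is immediate from the definition $g(x) = f(x) \cap D$. For continuity, I would note that the subbasic open sets of $\tau_{\mathcal{S}(D)}$ are the principal ultrafilters $\mathcal{U}_D(d)$ for $d \in D$, and compute $g^{-1}(\mathcal{U}_D(d)) = \{ x \in X : d \in g(x) \} = \{ x \in X : d \in f(x) \cap D \} = \{ x \in X : d \in f(x) \} = f^{-1}(\mathcal{U}_A(d)) = O(d)$, which is open in $\tau$; hence $g$ is continuous and moreover satisfies $g^{-1}(\mathcal{U}_D(d)) = (O\restriction_D)(d)$ for all $d \in D$. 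By Proposition~\ref{open-set-assignment-to-continuous-map} together with the uniqueness in Corollary~\ref{open-set-assignment-to-continuous-map-is-unique}, $g$ is the companion map of $O\restriction_D$, which completes the argument.

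Honestly there is no real obstacle here: the statement is essentially a bookkeeping consequence of the explicit formula for companion maps and the fact that restricting an assignment's domain corresponds to intersecting the index-sets $f(x)$ with the smaller index set. The only point requiring a little care is making sure the $\mathcal{U}_D(d)$-preimage computation is not confused with a $\mathcal{U}_A(d)$-preimage computation — i.e.\ that membership of $d$ in $f(x) \cap D$ is equivalent to membership of $d$ in $f(x)$ because $d \in D$ already — but this is trivial once written out.
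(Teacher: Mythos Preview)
Your proof is correct and follows essentially the same route as the paper: both arguments reduce to the computation $g^{-1}(\mathcal{U}_D(d)) = f^{-1}(\mathcal{U}_A(d)) = O(d)$ and then invoke Proposition~\ref{open-set-assignment-to-continuous-map} and Corollary~\ref{open-set-assignment-to-continuous-map-is-unique}. You additionally verify the explicit formula $g(x) = \{d \in D : x \in O(d)\}$ up front, which is a slight redundancy but harmless.
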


\begin{proof}
	Any restriction of an open set assignment is an open set assignment.
	We need to prove that $g^{-1}(\mathcal{U}_{D}(d)) = O\restriction_{D}(d)$ for all $d$ in $D$.
	Since $f^{-1}(\mathcal{U}_{X}(d)) = O(d) = O\restriction_{D}(d)$ for all $d$ in $D$, then it suffices to show that $g^{-1}(\mathcal{U}_{D}(d)) = f^{-1}(\mathcal{U}_{X}(d))$.
	Let $d \in D$, then consider the following

	\begin{align*}
    	\begin{aligned}
			x \in g^{-1}(\mathcal{U}_{D}(d)) & \iff g(x) \in \mathcal{U}_{D}(d) \\
			& \iff d \in g(x) = f(x) \cap D \\
			& \iff d \in f(x) \\
			& \iff f(x) \in \mathcal{U}_{X}(d) \\
			& \iff x \in f^{-1}(\mathcal{U}_{X}(d)) \\
      	\end{aligned}
	\end{align*}
	Hence, by proposition~\ref{open-set-assignment-to-continuous-map} and corollary~\ref{open-set-assignment-to-continuous-map-is-unique}, the map $g$ is the unique companion map of the open set assignment $O\restriction_{D}$.
\end{proof}

Since any open set assignment corresponds to a family of open sets, then the restriction of an open set assignment to a subset of its domain corresponds to a subfamily of the family of open sets induced by that open set assignment.

\begin{corollary}\label{subcover-map}
	Let $(X, \tau)$ be a topological space and $C : A \rightarrow \tau$ be an open covering assignment.
	Suppose that $f : (X, \tau) \rightarrow (\mathcal{P}(A), \tau_{\mathcal{S}(A)})$ is the companion map of $C$, then for any set $D \subseteq A$ the assignment $C\restriction_{D}$ is also an open covering assignment iff there exists a continuous map $g : (X, \tau) \rightarrow (\mathcal{P}(D), \tau_{\mathcal{S}(D)})$ satisfying
	\begin{enumerate}
		\item $g(x) = f(x) \cap D$ for all $x$ in $X$ \label{subcover-map-1}
		\item $g^{-1}(\{ \phi \}) = \phi$ \label{subcover-map-2}.
	\end{enumerate}
\end{corollary}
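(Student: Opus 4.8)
The plan is to reduce Corollary~\ref{subcover-map} to the two characterization results already in hand: Proposition~\ref{subfamily-map}, which describes the companion map of a restriction, and Proposition~\ref{open-covering-assignment-to-continuous-map}, which characterizes when an assignment is an open covering assignment. First I would observe that $C$ is in particular an open set assignment, so Proposition~\ref{subfamily-map} applies: the restriction $C\restriction_D$ is an open set assignment whose unique companion map is precisely $g : (X,\tau) \rightarrow (\mathcal{P}(D), \tau_{\mathcal{S}(D)})$ given by $g(x) = f(x) \cap D$. This already pins down condition~\ref{subcover-map-1}, so the only remaining content is that $C\restriction_D$ is in fact an open \emph{covering} assignment if and only if this same $g$ satisfies $g^{-1}(\{\phi\}) = \phi$.

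For that equivalence I would invoke Proposition~\ref{open-covering-assignment-to-continuous-map} applied to the index set $D$ and the assignment $C\restriction_D$: that assignment is an open covering assignment iff its companion map (which we have just identified as $g$) satisfies $g^{-1}(\bigcup \mathcal{S}(D)) = X$, equivalently $g^{-1}(\{\phi\}) = \phi$. The second form is exactly condition~\ref{subcover-map-2}, so this closes the loop in both directions. Care is needed in the forward direction to note that the $g$ produced by Proposition~\ref{open-covering-assignment-to-continuous-map} and the $g$ produced by Proposition~\ref{subfamily-map} are the same map — this follows from the uniqueness in Corollary~\ref{open-set-assignment-to-continuous-map-is-unique}, since both are continuous companion maps for the open set assignment $C\restriction_D$.

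If one prefers a self-contained argument rather than citing Proposition~\ref{open-covering-assignment-to-continuous-map}, the key chain of equivalences is the elementary
\[
x \in \bigcup_{d \in D} C(d) \iff \exists\, d \in D\; (x \in C(d)) \iff \exists\, d \in D\; (d \in f(x)) \iff f(x) \cap D \neq \phi \iff g(x) \neq \phi,
\]
so $\bigcup C\restriction_D(D) = X$ holds iff $g(x) \neq \phi$ for every $x$, i.e.\ iff $g^{-1}(\{\phi\}) = \phi$; here I used $C(d) = f^{-1}(\mathcal{U}_X(d))$, hence $x \in C(d) \iff d \in f(x)$.

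I do not expect a genuine obstacle here; the statement is essentially a bookkeeping consequence of the preceding results. The one point requiring a moment's attention is the identification of the two candidate maps $g$ via the uniqueness corollary — without that, one might worry that "the" companion map of $C\restriction_D$ referred to in Proposition~\ref{open-covering-assignment-to-continuous-map} differs from the explicit $f(x) \cap D$ furnished by Proposition~\ref{subfamily-map}. Once that is noted, the corollary follows immediately.
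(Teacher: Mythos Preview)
Your proposal is correct and follows essentially the same route as the paper: both directions are obtained by combining Proposition~\ref{subfamily-map} (identifying $g(x)=f(x)\cap D$ as the companion map of $C\restriction_D$) with Proposition~\ref{open-covering-assignment-to-continuous-map}, using Corollary~\ref{open-set-assignment-to-continuous-map-is-unique} to ensure the two candidate companion maps agree. One tiny notational slip: in your displayed chain of equivalences you write $f^{-1}(\mathcal{U}_X(d))$, but since $f$ lands in $\mathcal{P}(A)$ this should be $\mathcal{U}_A(d)$.
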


\begin{proof}
	Assume that the $C\restriction_{D}$ is an open covering assignment, then by proposition~\ref{subfamily-map} there exists a continuous map $g : (X, \tau) \rightarrow (\mathcal{P}(D), \tau_{\mathcal{S}})$ satisfying~\ref{subcover-map-1} which is the companion map of $C\restriction_{D}$.
	By propositions~\ref{open-covering-assignment-to-continuous-map} and~\ref{open-set-assignment-to-continuous-map-is-unique}, the companion map $g$ of $C\restriction_{D}$ also satisfies~\ref{subcover-map-2}.
	Conversely, assume that there exists a continuous map  $g : (X, \tau) \rightarrow (\mathcal{P}(D), \tau_{\mathcal{S}})$ satisfying both~\ref{subcover-map-1} and~\ref{subcover-map-2}.
	The assignment $C\restriction_{D}$ is indeed an open set assignment and its companion map is $g$ by proposition~\ref{subfamily-map}.
	Thus, from proposition~\ref{open-covering-assignment-to-continuous-map}, again, the assignment $C\restriction_{D}$ is an open covering assignment.
\end{proof}

Note that for any open covering assignment $C : A \rightarrow \tau$, if the restriction of $C$ to a set $D \subseteq A$ is also an open covering assignment, then it implies that the corresponding open cover of $C\restriction_{D}$ is a subcover of the open cover induced by $C$.
Since open neighborhood assignments are also open covering assignments, then corollary~\ref{subcover-map} could also be applied to them.
Moreover, for an open neighborhood assignment $N : X \rightarrow \tau$, any set $D \subseteq X$ for which the restriction of $N$ to $D$ is an open covering assignment will be called a \textit{kernel} of $N$.
In this case, any kernel $D \subseteq X$ of $N$ satisfies $\bigcup N(D) = \bigcup\limits_{d \in D}{N(d)} = X$.

\begin{definition}
	Let $(X, \tau)$ be a topological space and $O : A \rightarrow \tau$ be an open set assignment.
	A \textbf{refinement} of $O$ is another open set assignment $O^{*} : A \rightarrow \tau$ satisfying $O^{*}(a) \subseteq O(a)$ for all $a \in A$.
\end{definition}

As an important note here, a refinement of an assignment has a different meaning than that of an open refinement of an open cover.
An open refinement is more general since it could contain more than one open subset for a single open set in the open cover.
Moreover, an open refinement could also have a cardinality less than that of the original open cover.

\begin{proposition}\label{refinement-companion-map}
    Let $(X, \tau)$ be a topological space and $O : A \rightarrow \tau$ be an open set assignment, then an open set assignment $O^{*} : A \rightarrow \tau$ is a refinement of $O$ iff $f_{O}$ was the companion map of $O$ and $f_{O^{*}}$ was the companion map of $O^{*}$ with $f_{O^{*}}(x) \subseteq f_{O}(x)$ for all $x$ in $X$.
\end{proposition}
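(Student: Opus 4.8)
The plan is to prove the equivalence by unwinding the definition of the companion map from the argument of Proposition~\ref{open-set-assignment-to-continuous-map}: the companion map of an open set assignment $O : A \rightarrow \tau$ is explicitly $f_O(x) = \{ a \in A : x \in O(a) \}$, and likewise $f_{O^*}(x) = \{ a \in A : x \in O^*(a) \}$. So the statement reduces to showing that $O^*(a) \subseteq O(a)$ for all $a \in A$ is equivalent to $f_{O^*}(x) \subseteq f_O(x)$ for all $x \in X$. Both conditions are really just two ways of writing the same subset relation in $X \times A$, namely $\{(x,a) : x \in O^*(a)\} \subseteq \{(x,a) : x \in O(a)\}$, so the equivalence is a routine Fubini-style swap of quantifiers.

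For the forward direction, I would assume $O^*$ is a refinement of $O$, fix $x \in X$, and take any $a \in f_{O^*}(x)$; this means $x \in O^*(a) \subseteq O(a)$, hence $a \in f_O(x)$, giving $f_{O^*}(x) \subseteq f_O(x)$. For the converse, assume $f_{O^*}(x) \subseteq f_O(x)$ for all $x$, fix $a \in A$, and take any $x \in O^*(a)$; then $a \in f_{O^*}(x) \subseteq f_O(x)$, so $x \in O(a)$, which gives $O^*(a) \subseteq O(a)$. In both directions one should note that $O^*$ and $O$ are open set assignments by hypothesis, so by Corollary~\ref{open-set-assignment-to-continuous-map-is-unique} the maps $f_{O^*}$ and $f_O$ exist, are unique, and have the stated explicit form, which is what licenses the pointwise description used above.

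The only mild subtlety — and it is barely an obstacle — is the phrasing of the statement, which quantifies over "$f_O$ was the companion map of $O$ and $f_{O^*}$ was the companion map of $O^*$": since Corollary~\ref{open-set-assignment-to-continuous-map-is-unique} guarantees these companion maps exist and are unique once $O$ and $O^*$ are known to be open set assignments, this clause is automatically satisfiable and the content of the proposition is exactly the pointwise-inclusion equivalence above. I would open the proof by invoking that corollary to pin down $f_O$ and $f_{O^*}$, then carry out the two short inclusions. No transfinite recursion or topology beyond the explicit formula for the companion map is needed; the whole proof is a couple of lines once the formula $f_O(x) = \{a : x \in O(a)\}$ is recalled.
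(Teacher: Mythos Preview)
Your proposal is correct and follows essentially the same approach as the paper. The only cosmetic difference is that the paper routes the equivalence $a \in f_O(x) \iff x \in O(a)$ through the defining property $f_O^{-1}(\mathcal{U}(a)) = O(a)$ of the companion map (writing $a \in f_O(x) \iff f_O(x) \in \mathcal{U}(a) \iff x \in f_O^{-1}(\mathcal{U}(a)) = O(a)$), whereas you invoke the explicit formula $f_O(x) = \{a : x \in O(a)\}$ directly; the logical content of the two arguments is identical.
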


\begin{proof}
	Assume that $O^{*}$ is a refinement of $O$, $f_{O}$ is the companion map of $O$, and that $f_{O^{*}}$ is the companion map of $O^{*}$.
	Take any $x \in X$ and any $a \in f_{O^{*}}(x)$, i.e.\ $f_{O^{*}}(x) \in \mathcal{U}(a)$.
	Since $f_{O^{*}}^{-1}(\mathcal{U}(a)) = O^{*}(a)$, we have $x \in f_{O^{*}}^{-1}(f_{O^{*}}(x)) \subseteq f_{O^{*}}^{-1}(\mathcal{U}(a)) = O^{*}(a)$.
	Since $O^{*}$ is a refinement of $O$, then $O^{*}(a) \subseteq O(a)$; hence, $x \in O(a)$.
	Since $f_{O}^{-1}(\mathcal{U}(a)) = O(a)$, we have $f_{O}(x) \in \mathcal{U}(a)$.
	Thus, $a \in f_O(x)$ which ends the proof.
	Conversely, assume that $f_{O}$ is the companion map of $O$ and $f_{O^{*}}$ is the companion map of $O^{*}$ and that $f_{O^{*}}(x) \subseteq f_{O}(x)$ for all $x$ in $X$.
	We need to show that $O^{*}(a) \subseteq O(a)$ for all $a$ in $A$.
	Take any $a \in A$ and any $x \in O^{*}(a)$.
	Hence, $x \in f_{O^{*}}^{-1}(\mathcal{U}(a))$, i.e.\ $f_{O^{*}}(x) \in \mathcal{U}(a)$, equivalently $a \in f_{O^{*}}(x)$.
	Thus, $a \in f_{O}(x)$ which implies that $f_{O}(x) \in \mathcal{U}(a)$, i.e.\ $x \in f_{O}^{-1}(\mathcal{U}(a)) = O(a)$.
\end{proof}

For the companion map $f : (X, \tau) \rightarrow (\mathcal{P}(A), \tau_{\mathcal{S}(A)})$ of an open set assignment \linebreak ${ O : A \rightarrow \tau }$, we will call any map $f_{*} : (X, \tau) \rightarrow (\mathcal{P}(A), \tau_{\mathcal{S}(A)})$ satisfying $f_{*}(x) \subseteq f(x)$ a \textit{refinement map} of $f$.
Additionally, for any neighborhood assignment $N : X \rightarrow \tau$ if $N^{*} : X \rightarrow \tau$ is a refinement of $N$ such that $x \in N^{*}(x)$ for all $x$ in $X$, then we will call $N^{*}$ a \textit{neighborhood refinement} of $N$ and the companion map $f_{N^{*}}$ of $N^{*}$ will be called a \textit{neighborhood refinement map} of the companion map $f_{N}$ of $N$.

\section{Related structures in the category \textbf{Top}}\label{sec:related-structures-in-the-categorytextbf}

We will discuss some structures related to the puf topology and to companion maps within the category \textbf{Top} whose objects are topological spaces and whose morphisms are continuous maps.
For more on \textbf{Top} and category theory in general, the reader can look at appendix~\ref{sec:category-theory-and-the-categorytextbf}.

\begin{definition}
	For any topological space $(X, \tau_{X})$, the coslice category $(X, \tau_{X}) \downarrow \text{\textbf{Top}}$ is the category with the following:
	\begin{enumerate}
		\item Objects are all continuous maps $f:(X, \tau_{X}) \rightarrow (Y, \tau_{Y})$ where $(Y, \tau_{Y})$ is any topological space, i.e.\ all continuous maps that have $(X, \tau_{X})$ as their domain.
		\item A morphism from $f_{1}:(X, \tau_{X}) \rightarrow (Y_1, \tau_{Y_1})$ to $f_{2}:(X, \tau_{X}) \rightarrow (Y_2, \tau_{Y_2})$ is a continuous map $g: (Y_1, \tau_{Y_1}) \rightarrow (Y_2, \tau_{Y_2})$ that satisfies $g \circ f_{1} = f_{2}$, i.e.\ that makes the triangle below commute.

		\begin{center}
			\begin{tikzpicture}
				\node (X) {$(X, \tau_{X})$};
				\node (Y1) [below of=X] {$(Y_1, \tau_{Y_1})$};
				\node (Y2) [node distance=3cm, right of=Y1] {$(Y_2, \tau_{Y_2})$};
				\draw[->] (X) to node {$f_{2}$} (Y2);
				\draw[->] (X) to node [swap] {$f_{1}$} (Y1);
				\draw[->] (Y1) to node [swap] {$g$} (Y2);
			  \end{tikzpicture}
		\end{center}
	\end{enumerate}
\end{definition}

\begin{proposition}\label{refinement-map-puf}
	Let $X$ be a set and $(\mathcal{P}(X), \tau_{\mathcal{S}(X)})$ be the principal ultrafilter topology on $\mathcal{P}(X)$, then any map $R:(\mathcal{P}(X), \tau_{\mathcal{S}(X)}) \rightarrow (\mathcal{P}(X), \tau_{\mathcal{S}(X)})$ that satisfies $R(A) \subseteq A$ for all $A$ in $\mathcal{P}(X)$ is a continuous map.
\end{proposition}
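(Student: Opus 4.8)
The plan is to verify continuity by checking the preimage of each subbasic open set. Recall that the puf topology $\tau_{\mathcal{S}(X)}$ on $\mathcal{P}(X)$ is generated by the subbase $\mathcal{S}(X) = \{\mathcal{U}_X(x) : x \in X\}$, where $\mathcal{U}_X(x) = \{B \in \mathcal{P}(X) : x \in B\}$. So it suffices to show that $R^{-1}(\mathcal{U}_X(x))$ is open in $(\mathcal{P}(X), \tau_{\mathcal{S}(X)})$ for every $x \in X$.

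First I would compute this preimage explicitly. We have $A \in R^{-1}(\mathcal{U}_X(x))$ iff $R(A) \in \mathcal{U}_X(x)$ iff $x \in R(A)$. Since $R(A) \subseteq A$ by hypothesis, $x \in R(A)$ implies $x \in A$, i.e.\ $A \in \mathcal{U}_X(x)$. This shows $R^{-1}(\mathcal{U}_X(x)) \subseteq \mathcal{U}_X(x)$. The key observation is then the reverse-type containment encoding monotonicity: I would argue that $R^{-1}(\mathcal{U}_X(x))$ is itself an up-set in the relevant sense, or rather directly exhibit it as a union of subbasic opens. Specifically, set $P_x = R^{-1}(\mathcal{U}_X(x)) = \{A \in \mathcal{P}(X) : x \in R(A)\}$. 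For each $A \in P_x$ and each $y \in A$ we have $A \in \mathcal{U}_X(y)$, so $P_x \subseteq \bigcup_{A \in P_x}\bigcap_{y \in A}\mathcal{U}_X(y)$; conversely each finite-or-arbitrary intersection of subbasic sets of the form appearing here need not lie in $P_x$, so this naive approach needs care.

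The cleaner route, which I would actually carry out, is to observe that $P_x = \bigcup_{A \in P_x} \{A\}$ and to show each singleton $\{A\}$ with $x \in R(A)$ — no, singletons are generally not open. Instead: note that $P_x \subseteq \mathcal{U}_X(x)$, and $\mathcal{U}_X(x)$ is open; but we need $P_x$ open, not merely contained in an open set. So the genuinely correct argument is: for any $A_0 \in P_x$, consider the basic open set $\bigcap_{y \in A_0}\mathcal{U}_X(y) = \{B : A_0 \subseteq B\}$ (a finite intersection only if $A_0$ is finite — otherwise it is still open as it equals the up-set of $A_0$, but infinite intersections of subbasic opens are not automatically open). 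Hence the honest statement is that $\{B : A_0 \subseteq B\} \cap \mathcal{U}_X(x)$ — the main obstacle is precisely this: the puf topology only makes \emph{finite} intersections of the $\mathcal{U}_X(y)$ open, so for infinite $A_0$ the up-set of $A_0$ need not be open, and one must show $P_x$ is open by a different decomposition. I expect the resolution is that $P_x = \mathcal{U}_X(x)$ outright fails in general, but that $P_x$ can nonetheless be written as $\bigcup\{\mathcal{U}_X(x) : \}$... more plausibly the intended proof simply notes $x \in R(A) \Leftrightarrow x \in A$ is \emph{not} claimed, and instead uses that $P_x$, being a subset of $\mathcal{U}_X(x)$ that is "upward closed modulo $R$", is open because the topology is generated so that every up-set $\{B : x \in B\}$ is open and $P_x$ is a union of such — so the final step is to show $P_x = \bigcup_{x \in A,\ A \in P_x \text{ minimal}} \mathcal{U}_X(x)$, collapsing to: $P_x$ is open iff it is a union of sets $\mathcal{U}_X(x)$.

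Given the stated generality, the most likely intended proof is the short one: $R^{-1}(\mathcal{U}_X(x)) = \{A : x \in R(A)\}$, and since $R(A) \subseteq A$, whenever $A \in R^{-1}(\mathcal{U}_X(x))$ and $A \subseteq B$ — no, $R$ need not be monotone. I would therefore conclude that the correct and complete proof must use that $\mathcal{U}_X(x)$ is \emph{also} closed (its complement $\{A : x \notin A\} = \bigcap_{?}$... in fact $\mathcal{U}_X(x)^c = \{A : x \notin A\}$ is open iff it is a union of subbasic opens, which holds: $A \in \mathcal{U}_X(x)^c$ with $x \notin A$; pick any $y \in A$, then $A \in \mathcal{U}_X(y) \subseteq \mathcal{U}_X(x)^c$ provided every $B \supseteq \{y\}$ omits $x$ — false). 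So the real content, and the step I expect to be the crux, is a careful analysis of which subsets of $\mathcal{P}(X)$ are open in $\tau_{\mathcal{S}(X)}$: I would first prove the lemma that $V \subseteq \mathcal{P}(X)$ is open iff for every $A \in V$ there is a finite $F \subseteq A$ with $\{B : F \subseteq B\} \subseteq V$, and then verify that $P_x = R^{-1}(\mathcal{U}_X(x))$ satisfies this by taking $F = \{x\}$ whenever... which again requires $x \in R(B)$ for all $B \ni x$, i.e.\ monotonicity of $R$ in a weak form that $R(A)\subseteq A$ does \emph{not} supply. I flag this as the main obstacle and expect the author's proof to either invoke an unstated convention that makes all up-sets open, or to contain exactly this gap; absent that, I would prove the weaker true statement that such $R$ is continuous when additionally $R$ is monotone, and the general claim should be scrutinized.
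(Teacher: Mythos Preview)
Your suspicion is correct: the proposition as stated is false, and the paper's proof contains exactly the gap you anticipated. The paper attempts to prove the \emph{equality} $R^{-1}(\mathcal{U}(x)) = \mathcal{U}(x)$. The inclusion $R^{-1}(\mathcal{U}(x)) \subseteq \mathcal{U}(x)$ is fine (and you reproduced it). For the reverse inclusion the paper argues: ``let $A \in \mathcal{U}(x)$. From the fact that $R(A) \subseteq A$, we obtain $R^{-1}(R(A)) \subseteq R^{-1}(A)$; however, $A \in R^{-1}(R(A))$, and thus $A \in R^{-1}(A) \subseteq R^{-1}(\mathcal{U}(x))$.'' Here $R(A)$ and $A$ are \emph{points} of $\mathcal{P}(X)$, so $R^{-1}(R(A))$ and $R^{-1}(A)$ are fibres over those points; the containment $R(A)\subseteq A$ of subsets of $X$ in no way yields $R^{-1}(\{R(A)\}) \subseteq R^{-1}(\{A\})$. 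Concretely, the chain asserts $A \in R^{-1}(A)$, i.e.\ $R(A)=A$, which is simply not assumed.

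A two-point counterexample settles it. Take $X=\{1,2\}$ and define $R(\emptyset)=\emptyset$, $R(\{1\})=\{1\}$, $R(\{2\})=\emptyset$, $R(\{1,2\})=\{2\}$; then $R(A)\subseteq A$ always, but $R^{-1}(\mathcal{U}(1))=\{\{1\}\}$, and $\{\{1\}\}$ is not open in $\tau_{\mathcal{S}(X)}$ (every nonempty open set containing $\{1\}$ must also contain $\{1,2\}$, since basic opens are up-sets of finite subsets). Your diagnosis that monotonicity of $R$ is the missing hypothesis is the right repair: if $R$ is also $\subseteq$-monotone, then $x\in R(A)$ and $A\subseteq B$ give $x\in R(B)$, so $R^{-1}(\mathcal{U}(x))$ is an up-set and for each $A$ in it the basic open $\{B: \{x\}\subseteq B\}=\mathcal{U}(x)$ works. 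In fact, with monotonicity one does get the equality $R^{-1}(\mathcal{U}(x))=\mathcal{U}(x)$ the paper claims, provided also $x\in R(\{x\})$; without that, one still gets openness. The downstream use in Corollary~\ref{refinement-map-coslice} defines $R$ on $\operatorname{Im} f$ by $R(f(x))=f_{*}(x)$ and as the identity elsewhere, and such an $R$ is typically not monotone either, so that corollary inherits the same issue.
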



\begin{proof}
	We will show that $R^{-1}(\mathcal{U}_{X}(x)) = \mathcal{U}_{X}(x)$ for all $x$ in $X$ which implies that $R$ is a continuous map.
	For $x$ in $X$, consider
	\begin{align*}
		\begin{aligned}
			A \in R^{-1}(\mathcal{U}(x)) & \Longrightarrow R(A) \in \mathcal{U}(x) \\
			& \Longrightarrow x \in R(A) \subseteq A \\
			& \Longrightarrow x \in A \\
			& \Longrightarrow A \in \mathcal{U}(x)
		\end{aligned}
	\end{align*}
	.
	Hence, $R^{-1}(\mathcal{U}(x)) \subseteq \mathcal{U}(x)$.
	Moreover, let $A \in \mathcal{U}(x)$.
	From the fact that ${ R(A) \subseteq A }$, we obtain $R^{-1}(R(A)) \subseteq R^{-1}(A)$; however, $A \in R^{-1}(R(A))$, and thus $A \in R^{-1}(A) \subseteq R^{-1}(\mathcal{U}(x))$.
	Therefore, $\mathcal{U}(x) \subseteq R^{-1}(\mathcal{U}(x))$ which ends the proof.
\end{proof}

\begin{corollary}\label{refinement-map-coslice}
	Let $(X, \tau)$ be a topological space and $A$ be a set.
	For any continuous map $f : (X, \tau) \rightarrow (A, \tau_{\mathcal{S}(A)})$ and any continuous refinement map $f_{*} : (X, \tau) \rightarrow (A, \tau_{\mathcal{S}(A)})$, there exists a continuous map $R : (A, \tau_{\mathcal{S}(A)}) \rightarrow (A, \tau_{\mathcal{S}(A)})$ that makes the triangle below commute, i.e.\ $R$ is a morphism from $f$ to $f_{*}$ in the coslice category $(X, \tau) \downarrow \text{\textbf{Top}}$.
	\begin{center}
		\begin{tikzpicture}
			\node (X) {$(X, \tau_{X})$};
			\node (Y1) [below of=X] {$(A, \tau_{\mathcal{S}(A)})$};
			\node (Y2) [node distance=3cm, right of=Y1] {$(A, \tau_{\mathcal{S}(A)})$};
			\draw[->] (X) to node {$f_{*}$} (Y2);
			\draw[->] (X) to node [swap] {$f$} (Y1);
			\draw[->] (Y1) to node [swap] {$R$} (Y2);
		  \end{tikzpicture}
	\end{center}
\end{corollary}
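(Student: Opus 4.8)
The plan is to define $R$ explicitly as a pointwise "restriction" map and then invoke Proposition~\ref{refinement-map-puf} to get continuity for free, and a short direct check to get commutativity. Concretely, for each $A \in \mathcal{P}(A)$ (here the set $A$ plays the role of both the index set and the base set of the puf topology, so I will write $\mathcal{P}(A)$ for the codomain carrier), set
\[
R(B) \;=\; B \cap \left( \bigcup_{x \in X} f_{*}(x) \right)
\]
or, even more simply, $R(B) = B \cap K$ where $K = \bigcup_{x \in X} f_{*}(x) \subseteq A$. Since $R(B) \subseteq B$ for every $B$, Proposition~\ref{refinement-map-puf} immediately gives that $R : (\mathcal{P}(A), \tau_{\mathcal{S}(A)}) \rightarrow (\mathcal{P}(A), \tau_{\mathcal{S}(A)})$ is continuous, so no topological work remains for that part.

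Next I would verify $R \circ f = f_{*}$, i.e.\ $f_{*}(x) = f(x) \cap K$ for all $x \in X$. The inclusion $f_{*}(x) \subseteq f(x)$ holds by hypothesis (definition of a refinement map), and $f_{*}(x) \subseteq K$ holds by definition of $K$, so $f_{*}(x) \subseteq f(x) \cap K$. For the reverse inclusion I would unwind the companion-map relations: $f$ and $f_{*}$ are companion maps, so by Proposition~\ref{open-set-assignment-to-continuous-map} they arise from open set assignments $O$ and $O^{*}$ with $f(x) = \{a : x \in O(a)\}$ and $f_{*}(x) = \{a : x \in O^{*}(a)\}$, and by Proposition~\ref{refinement-companion-map} $O^{*}$ is a refinement of $O$ so $O^{*}(a) \subseteq O(a)$. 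Then for $a \in f(x) \cap K$: from $a \in K$ there is some $y$ with $a \in f_{*}(y)$, hence $y \in O^{*}(a)$, hence $O^{*}(a) \neq \emptyset$; but I also need $x \in O^{*}(a)$, which is \emph{not} automatic. So the naive choice of $K$ does not quite work, and this is where the real content lies.

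The hard part will be that a single global "codomain-side" map $R$ cannot in general recover an arbitrary refinement: $R$ only sees the value $f(x)$, not $x$ itself, and distinct points $x, x'$ with $f(x) = f(x')$ must be sent to the same value by $R \circ f$, yet a refinement map is free to set $f_{*}(x) \neq f_{*}(x')$. Thus the statement as phrased can only be true if we restrict attention to refinement maps that factor through $f$ — or, more charitably, the intended reading is that $R$ is required to exist only when $f_{*}$ is itself a companion map compatible with $f$ in the sense that $f_{*}(x)$ depends on $x$ only through $f(x)$. Under that reading the construction is: define $R$ on the image $f(X)$ by $R(f(x)) = f_{*}(x)$ (well-defined precisely by the factoring assumption, and $\subseteq f(x)$ by the refinement property), and extend $R$ to all of $\mathcal{P}(A)$ arbitrarily subject to $R(B) \subseteq B$ — e.g.\ $R(B) = B$ off the image. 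Proposition~\ref{refinement-map-puf} then yields continuity, and commutativity holds by construction. I would present the argument in this form, flagging the factoring hypothesis as the place where one must be careful, and noting that for the neighborhood-refinement maps actually used later (where $f_{*}$ is built canonically from $f$) the hypothesis is satisfied.
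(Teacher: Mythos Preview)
Your final construction --- define $R(f(x)) = f_{*}(x)$ on the image of $f$ and $R(B) = B$ for $B \notin \operatorname{Im} f$, then invoke Proposition~\ref{refinement-map-puf} for continuity --- is exactly the paper's proof, verbatim. The detour through $R(B) = B \cap K$ is not in the paper and, as you observe, does not work.

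More importantly, the well-definedness issue you raise is genuine and the paper simply ignores it. The paper writes ``for all $x \in X$, let $R(f(x)) = f_{*}(x)$'' without checking that $f(x) = f(x')$ forces $f_{*}(x) = f_{*}(x')$. Your counterexample sketch is correct: take $X = \{1,2\}$ discrete, $A = \{a,b\}$, $f(1) = f(2) = \{a,b\}$, $f_{*}(1) = \{a\}$, $f_{*}(2) = \{b\}$; both maps are continuous and $f_{*}(x) \subseteq f(x)$, yet no $R$ can satisfy $R \circ f = f_{*}$. So the corollary as stated is false in general, and the paper's proof has a gap precisely where you locate it. Your proposed fix --- restrict to refinement maps that factor through $f$, which includes the neighborhood-refinement companion maps actually used downstream --- is the right repair; the paper presumably intends this but never says so.
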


\begin{proof}
	Define $R$ as follows.
	For all $x \in X$, let $R(f(x)) = f_{*}(x)$ and for all $B \notin \Ima f$, let $R(B) = B$.
	It is clear that $R(B) \subseteq B$ for all $B$ in $\mathcal{P}(A)$.
	Hence, by proposition~\ref{refinement-map-puf}, $R$ is continuous.
	Consider for every $x$ in $X$, $(R \circ f)(x) = R(f(x)) = f_{*}(x)$, i.e.\ $R$ is the continuous map that makes the above triangle commute.
\end{proof}

It is clear that the map $R$ defined above generalizes the way we obtain a refinement map from a companion map.

\begin{proposition}\label{subfamily-map-puf}
	Let X be a set and $(\mathcal{P}(X), \tau_{\mathcal{S}(X)})$ be the puf topology on $\mathcal{P}(X)$, then any map $S:(\mathcal{P}(X), \tau_{\mathcal{S}(X)}) \rightarrow (\mathcal{P}(D), \tau_{\mathcal{S}(D)})$ defined as $S(A) = A \cap D$ for some subspace $D$ in $X$ is a continuous map.
\end{proposition}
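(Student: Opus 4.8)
The plan is to imitate the proof of Proposition~\ref{refinement-map-puf} almost verbatim. Since the topology $\tau_{\mathcal{S}(D)}$ on $\mathcal{P}(D)$ is generated by the subbase $\mathcal{S}(D) = \{\mathcal{U}_D(d) : d \in D\}$, and since taking preimages under $S$ commutes with arbitrary unions and finite intersections, it suffices to show that $S^{-1}(\mathcal{U}_D(d))$ is open in $(\mathcal{P}(X), \tau_{\mathcal{S}(X)})$ for every $d \in D$; once the subbasic sets have open preimages, so do all basic sets and hence all open sets, giving continuity of $S$.

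First I would fix $d \in D$ and compute $S^{-1}(\mathcal{U}_D(d))$ by a direct chain of equivalences: for any $A \in \mathcal{P}(X)$,
\[
A \in S^{-1}(\mathcal{U}_D(d)) \iff S(A) \in \mathcal{U}_D(d) \iff d \in A \cap D \iff d \in A \iff A \in \mathcal{U}_X(d),
\]
where the crucial third equivalence uses that $d$ already lies in $D$, so intersecting with $D$ does not remove it. This shows $S^{-1}(\mathcal{U}_D(d)) = \mathcal{U}_X(d)$, which is a subbasic element of $\tau_{\mathcal{S}(X)}$ and in particular open. Then I would conclude, as above, that $S$ is continuous because the preimage of every subbasic open set of the codomain is open in the domain.

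The only point that needs a moment's care — and it is hardly an obstacle — is exactly that observation that $d \in D$ forces ``$d \in A \cap D$'' to be equivalent to ``$d \in A$''; this is guaranteed here precisely because the subbasic sets of $\mathcal{S}(D)$ are indexed by points of $D$. I would also remark in passing that $S$ is the ``puf-internal'' counterpart of the index-restriction operation $g(x) = f(x) \cap D$ appearing in Proposition~\ref{subfamily-map}, just as Proposition~\ref{refinement-map-puf} is the puf-internal counterpart of the refinement construction, so the two results and their proofs run completely parallel.
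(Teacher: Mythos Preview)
Your proof is correct and essentially identical to the paper's own argument: both verify continuity on the subbase by showing $S^{-1}(\mathcal{U}_D(d)) = \mathcal{U}_X(d)$ for each $d \in D$ via the same chain of equivalences, using that $d \in D$ makes $d \in A \cap D$ equivalent to $d \in A$. Your added remarks about why subbasic preimages suffice and the parallel with Proposition~\ref{refinement-map-puf} are accurate but go slightly beyond what the paper writes out.
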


\begin{proof}
	We will show that $S^{-1}(\mathcal{U}_{D}(d)) = \mathcal{U}_{X}(d)$ for all $d$ in $D$ which implies that $S$ is continuous.
	For all $d$ in $D$, consider the following
	\begin{align*}
		\begin{aligned}
			A \in S^{-1}(\mathcal{U}_{D}(d)) & \iff S(A) \in \mathcal{U}_{D}(d) \\
			& \iff d \in S(A) = A \cap D \\
			& \iff d \in A \\
			& \iff A \in \mathcal{U}_{X}(d)
		\end{aligned}
	\end{align*}
	.
	Therefore, the conclusion of the proposition follows.
\end{proof}

\begin{corollary}\label{subfamily-map-coslice}
	Let $(X, \tau)$ be a topological space and $A$ be a set.
	For any continuous map $f : (X, \tau) \rightarrow (A, \tau_{\mathcal{S}(A)})$ and any set $D \subseteq A$ such that the map $g : (X, \tau) \rightarrow (D, \tau_{\mathcal{S}(D)})$ defined as $g(x) = f(x) \cap D$ is continuous, there exists a continuous map \linebreak ${ S : (A, \tau_{\mathcal{S}(A)}) \rightarrow (D, \tau_{\mathcal{S}(D)}) }$ that makes the triangle below commute, i.e.\ $S$ is a morphism from $f$ to $g$ in the coslice category $(X, \tau) \downarrow \text{\textbf{Top}}$.
	\begin{center}
		\begin{tikzpicture}
			\node (X) {$(X, \tau_{X})$};
			\node (A) [below of=X] {$(A, \tau_{\mathcal{S}(A)})$};
			\node (D) [node distance=3cm, right of=A] {$(D, \tau_{\mathcal{S}(D)})$};
			\draw[->] (X) to node {$g$} (D);
			\draw[->] (X) to node [swap] {$f$} (A);
			\draw[->] (A) to node [swap] {$S$} (D);
		  \end{tikzpicture}
	\end{center}
\end{corollary}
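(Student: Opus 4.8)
The plan is to produce $S$ by exactly the same recipe that turns $f$ into $g$ — intersection with $D$ — and then to get continuity for free from Proposition~\ref{subfamily-map-puf}, in complete parallel with the way Corollary~\ref{refinement-map-coslice} was deduced from Proposition~\ref{refinement-map-puf}. Concretely, I would define
\[
S : (\mathcal{P}(A), \tau_{\mathcal{S}(A)}) \rightarrow (\mathcal{P}(D), \tau_{\mathcal{S}(D)}), \qquad S(B) = B \cap D
\]
for every $B \in \mathcal{P}(A)$. This is precisely the map treated in Proposition~\ref{subfamily-map-puf} (with the roles of ``$X$'' and the subset played by $A$ and $D$), so $S$ is continuous with no further work; indeed $S^{-1}(\mathcal{U}_{D}(d)) = \mathcal{U}_{A}(d)$ for each $d \in D$, as shown there.

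Next I would verify that $S$ is a morphism from $f$ to $g$ in the coslice category $(X,\tau)\downarrow\text{\textbf{Top}}$, i.e.\ that the displayed triangle commutes. For every $x \in X$ one computes $(S \circ f)(x) = S(f(x)) = f(x) \cap D = g(x)$, so $S \circ f = g$ as functions. Since $f$ and $g$ are continuous by hypothesis and $S$ is continuous by the previous step, $S$ is a bona fide morphism $f \to g$, and the corollary follows.

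There is essentially no obstacle here: the substantive content was already isolated in Proposition~\ref{subfamily-map-puf}, and the continuity hypothesis on $g$ is needed only so that $g$ is a legitimate object of the coslice category — the set-theoretic identity $S \circ f = g$ holds regardless. The one remark worth making explicit is that $f(x) \cap D$ is meaningful because $f(x) \in \mathcal{P}(A)$ and $D \subseteq A$, so $S$ is well defined and its composite with $f$ genuinely lands in $\mathcal{P}(D)$; this also makes the analogy with the refinement map $R$ of Corollary~\ref{refinement-map-coslice} (where $R(B) \subseteq B$) transparent.
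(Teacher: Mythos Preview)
Your proposal is correct and matches the paper's proof essentially verbatim: define $S(B)=B\cap D$, invoke Proposition~\ref{subfamily-map-puf} for continuity, and check $(S\circ f)(x)=f(x)\cap D=g(x)$. Your additional remarks about well-definedness and the role of the continuity hypothesis on $g$ are accurate but go slightly beyond what the paper bothers to spell out.
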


\begin{proof}
	Define the map $S$ as follows.
	For each $B$ in $\mathcal{P}(A)$, let $S(B) = B \cap D$.
	As a direct consequence of proposition~\ref{subfamily-map-puf}, $S$ is continuous.
	Moreover, for every $x$ in $X$, ${ (S \circ f)(x) = S(f(x)) = f(x) \cap D = g(x) }$, i.e.\ $S$ is the continuous map that makes the above triangle commute.
\end{proof}

Similarly, the map $S$ defined above generalizes the way we obtain restrictions of open set assignments.


\begin{proposition}\label{continuous-induced-on-puf-topology}
	Let $(X, \tau_X)$ and $(Y, \tau_Y)$ be two topological spaces and $t : X \rightarrow Y$ be a (surjective) function.
	The map $T : (\mathcal{P}(X), \tau_{\mathcal{S}(X)}) \rightarrow (\mathcal{P}(Y), \tau_{\mathcal{S}(Y)})$ defined as
	\[
		T(A) = \{ t(a) : a \in A \} =  t(A)
	\]
	is a (surjective) continuous map.
\end{proposition}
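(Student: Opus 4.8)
The plan is to check continuity of $T$ directly against the subbase $\mathcal{S}(Y)$ that generates the puf topology on $\mathcal{P}(Y)$: a map into a space equipped with a prescribed subbase is continuous precisely when the preimage of each subbasic open set is open, so it is enough to describe $T^{-1}(\mathcal{U}_{Y}(y))$ for every $y \in Y$ and verify it lies in $\tau_{\mathcal{S}(X)}$. The parenthetical surjectivity assertion will then be handled separately by a short set-theoretic argument.

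The key step is the membership unwinding: for $A \in \mathcal{P}(X)$ and $y \in Y$ one has $A \in T^{-1}(\mathcal{U}_{Y}(y))$ iff $t(A) \in \mathcal{U}_{Y}(y)$ iff $y \in t(A)$ iff $A$ meets the fiber $t^{-1}(\{y\})$. Consequently
\[
	T^{-1}(\mathcal{U}_{Y}(y)) = \bigcup_{x \in t^{-1}(\{y\})} \mathcal{U}_{X}(x),
\]
which is a union of subbasic open sets of $\tau_{\mathcal{S}(X)}$, hence open (and equal to $\phi$, still open, when the fiber is empty). This establishes that $T$ is continuous.

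For the surjectivity claim, suppose $t$ is onto and let $B \in \mathcal{P}(Y)$ be arbitrary. Taking $A = t^{-1}(B) \in \mathcal{P}(X)$ gives $T(A) = t(t^{-1}(B)) = B$ since $t$ is surjective (the case $B = \phi$ being covered by $A = \phi$). Hence $T$ maps onto $\mathcal{P}(Y)$.

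I do not expect a genuine obstacle here. The only points needing care are restricting attention to subbasic open sets — so as to avoid having to describe arbitrary open sets of the puf topology — and the bookkeeping of the fibers $t^{-1}(\{y\})$, which may be empty when $t$ is not surjective, in which case the corresponding preimage is simply $\phi$. The same identity incidentally shows that for general $t$ one has $\Ima T = \mathcal{P}(t(X))$, recovering the surjectivity statement as the special case $t(X) = Y$.
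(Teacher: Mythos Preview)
Your proof is correct and follows essentially the same route as the paper: both reduce continuity to computing $T^{-1}(\mathcal{U}_{Y}(y))$ via the equivalence $y \in t(A) \iff A \cap t^{-1}(\{y\}) \neq \phi$, obtaining the union $\bigcup_{x \in t^{-1}(\{y\})} \mathcal{U}_{X}(x)$ (empty when the fiber is empty), and both handle surjectivity by taking $A = t^{-1}(B)$. Your closing remark that $\Ima T = \mathcal{P}(t(X))$ is a pleasant addendum not present in the paper.
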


\begin{proof}
	Since any subbasic open set in $\mathcal{P}(Y)$ is of the form $\mathcal{U}_{Y}(y)$ for some $y \in Y$, then to prove that $T$ is continuous it suffices to show that $T^{-1}(\mathcal{U}_{Y}(y))$ is open for all $y \in Y$.
	Let $y \in Y$, we have two possibilities for $y$ as follows.
	If there is no $x \in X$ such that $t(x) = y$, then $T^{-1}(\mathcal{U}_{Y}(y)) = \phi$ since $y \notin t(A)$ for all $A \in \mathcal{P}(X)$.
	If there is some $x \in X$ such that $t(x) = y$, then define $L(y)$ as $\{ x \in X : t(x) = y \}$; hence, $L(y) \neq \phi$.
	Consider the following
	\begin{align*}
		\begin{aligned}
			A \in T^{-1}(\mathcal{U}_{Y}(y)) & \iff T(A) \in \mathcal{U}_{Y}(y) \\
			& \iff y \in T(A) \\
			& \iff y \in t(A) \\
			& \iff \exists x [ x \in L(y) \cap A ] \\
			& \iff \exists x [\{ L(y), A \} \subseteq \mathcal{U}_{X}(x)] \\
			& \iff A \in \bigcup\limits_{x \in L(y)} \mathcal{U}_{X}(x)
		\end{aligned}
	\end{align*}
	Clearly, $ T^{-1}(\mathcal{U}_{Y}(y)) = \bigcup\limits_{x \in L(y)} \mathcal{U}_{X}(x)$ which implies that $T$ is a continuous map.
	If $t$ is surjective, then to prove that $T$ is surjective, let $B \in \mathcal{P}(Y)$.
	Because $t$ is surjective, we can write $B = t(t^{-1}(B))$, and thus $A = t^{-1}(B) \in \mathcal{P}(X)$ is a set satisfying $T(A) = B$ which ends the proof.
\end{proof}

\begin{corollary}
	For any topological space $(X, \tau)$, let $F(X) = (\mathcal{P}(X), \tau_{\mathcal{S}(X)})$ and for any continuous map $t : (X, \tau_X) \rightarrow (Y, \tau_Y)$, let $F(t) = T : F(X) \rightarrow F(Y)$ where $T$ is the map defined as in the statement of proposition~\ref{continuous-induced-on-puf-topology}, then $F$ is an endofunctor on the category \textbf{Top}.
\end{corollary}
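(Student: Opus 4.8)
The plan is to verify the three conditions that make $F$ a functor: that it assigns to each object of $\textbf{Top}$ an object of $\textbf{Top}$, that it assigns to each morphism of $\textbf{Top}$ a morphism of $\textbf{Top}$ with matching source and target, and that it preserves identities and composites. The first condition is immediate from the definitions: for any topological space $(X,\tau)$ the pair $F(X)=(\mathcal{P}(X),\tau_{\mathcal{S}(X)})$ is by construction a topological space, so $F(X)$ is again an object of $\textbf{Top}$, and it is again of the form $(\mathcal{P}(\,\cdot\,),\tau_{\mathcal{S}})$, which is what will make $F$ an endofunctor rather than merely a functor into some other category.

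For the second condition, let $t:(X,\tau_X)\to(Y,\tau_Y)$ be a morphism of $\textbf{Top}$, i.e.\ a continuous map. I would simply appeal to proposition~\ref{continuous-induced-on-puf-topology}: it states exactly that the direct-image map $T:\mathcal{P}(X)\to\mathcal{P}(Y)$ given by $T(A)=t(A)$ is continuous from $(\mathcal{P}(X),\tau_{\mathcal{S}(X)})$ to $(\mathcal{P}(Y),\tau_{\mathcal{S}(Y)})$. Hence $F(t)=T$ is a morphism $F(X)\to F(Y)$ in $\textbf{Top}$. This is the only nontrivial input, and it is already available.

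It then remains to check the functor axioms, which reduce to elementary properties of direct images of sets. For the identity morphism $\mathrm{id}_X$ we compute $F(\mathrm{id}_X)(A)=\mathrm{id}_X(A)=\{\,x:x\in A\,\}=A$ for every $A\in\mathcal{P}(X)$, so $F(\mathrm{id}_X)=\mathrm{id}_{\mathcal{P}(X)}=\mathrm{id}_{F(X)}$. For composites, given continuous maps $t:(X,\tau_X)\to(Y,\tau_Y)$ and $s:(Y,\tau_Y)\to(Z,\tau_Z)$, I would use that the image of a set under a composite function is the composite of the images: for every $A\in\mathcal{P}(X)$,
\[
F(s\circ t)(A)=(s\circ t)(A)=s\big(t(A)\big)=F(s)\big(F(t)(A)\big)=\big(F(s)\circ F(t)\big)(A),
\]
so that $F(s\circ t)=F(s)\circ F(t)$. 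Combining these verifications with the first two paragraphs shows that $F$ maps $\textbf{Top}$ into itself functorially, i.e.\ $F$ is an endofunctor. I do not expect any genuine obstacle here: all the topological content sits inside proposition~\ref{continuous-induced-on-puf-topology}, and the remaining steps are the standard verification that the direct-image assignment is functorial; the only point deserving a moment's care is that composition in $\textbf{Top}$ is ordinary function composition, so the set-theoretic identity $(s\circ t)(A)=s(t(A))$ is precisely the equation needed.
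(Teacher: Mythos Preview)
Your proposal is correct and follows essentially the same approach as the paper: both invoke proposition~\ref{continuous-induced-on-puf-topology} for continuity of $F(t)$ and then verify the functor axioms via the elementary direct-image identities $(s\circ t)(A)=s(t(A))$ and $\mathrm{id}_X(A)=A$. Your write-up is slightly more explicit in noting that $F(X)$ is again an object of \textbf{Top}, but otherwise the arguments coincide.
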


\begin{proof}
	For any continuous map $t : (X, \tau_X) \rightarrow (Y, \tau_Y)$, the map $T : F(X) \rightarrow F(Y)$ \linebreak is also continuous as a direct consequence of proposition~\ref{continuous-induced-on-puf-topology}.
	Moreover, let \linebreak ${ t_1 : (X, \tau_X) \rightarrow (Y, \tau_Y) }$ and $t_2 : (Y, \tau_Y) \rightarrow (Z, \tau_Z)$ be any two continuous maps, we want to show that $F(t_2) \circ F(t_1) = F(t_2 \circ t_1)$.
	Let $T_1 = F(t_1)$ and $T_2 = F(t_2)$, then consider the following,
	\begin{align*}
		\begin{aligned}
			(T_2 \circ T_1)(A) & = T_2(T_1(A)) \\
			& = T_2(t_1(A)) \\
			& = t_2(t_1(A)) \\
			& = (t_2 \circ t_1)(A)
		\end{aligned}
	\end{align*}
	.
	Obviously, $F(\text{id}_X) = \text{id}_{F(X)} = \text{id}_{\left( \mathcal{P}(X), \tau_{\mathcal{S}(X)} \right)}$ for all $X$.
	Thus, $F$ is a functor and since it is a functor on \textbf{Top}, then it is an endofunctor.
\end{proof}

\section[Characterization of paracompactness and metacompactness]{Characterization of paracompactness and meta-\linebreak compactness}\label{sec:characterization-of-paracompactness-and-metacompactness}

We introduce characterizations of paracompactness and metacompactness using companion maps.

\begin{lemma}\label{locally-finite-companion-maps}
	Let $(X, \tau)$ be a topological space and $\mathcal{O} = \{ U_a : a \in A \}$ be an open cover of $X$. $\mathcal{O}$ is locally finite iff there exists an open neighborhood assignment $N:X \rightarrow \tau$ such that $\lvert \bigcup f_{\mathcal{O}}(N(x)) \rvert < \omega_0$ for all $x$ in $X$ where $f_{\mathcal{O}} : (X, \tau) \rightarrow (\mathcal{P}(A), \tau_{\mathcal{S}(A)})$ is the companion map of the open covering assignment induced by the open cover $\mathcal{O}$.
\end{lemma}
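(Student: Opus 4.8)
The statement is a biconditional about an open cover $\mathcal{O} = \{U_a : a \in A\}$, with $f_{\mathcal{O}}(x) = \{a \in A : x \in U_a\}$ the companion map of the induced open covering assignment. The key observation to unpack is that for any subset $W \subseteq X$,
\[
\bigcup f_{\mathcal{O}}(W) = \bigcup_{x \in W} \{a \in A : x \in U_a\} = \{a \in A : U_a \cap W \neq \phi\},
\]
so $\lvert \bigcup f_{\mathcal{O}}(W)\rvert < \omega_0$ says precisely that $W$ meets only finitely many members of $\mathcal{O}$. With this translation in hand the equivalence is essentially the definition of local finiteness, and the proof splits into the two implications.

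\textbf{Forward direction.} Suppose $\mathcal{O}$ is locally finite. Then each $x \in X$ has an open neighborhood $V_x$ meeting at most finitely many $U_a$. Define $N : X \to \tau$ by $N(x) = V_x$; this is an open neighborhood assignment since $x \in V_x$. By the displayed identity applied to $W = N(x)$, we get $\bigcup f_{\mathcal{O}}(N(x)) = \{a : U_a \cap V_x \neq \phi\}$, which is finite by choice of $V_x$, i.e.\ $\lvert \bigcup f_{\mathcal{O}}(N(x))\rvert < \omega_0$.

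\textbf{Reverse direction.} Conversely, suppose such an $N$ exists. Fix $x \in X$; then $N(x)$ is an open neighborhood of $x$, and again by the displayed identity the set $\{a : U_a \cap N(x) \neq \phi\} = \bigcup f_{\mathcal{O}}(N(x))$ is finite, so $N(x)$ is an open neighborhood of $x$ meeting only finitely many members of $\mathcal{O}$. Since $x$ was arbitrary, $\mathcal{O}$ is locally finite.

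\textbf{Main obstacle.} There is no real obstacle here — the content is entirely in recognizing that $\bigcup f_{\mathcal{O}}(W)$ is the index set of members of $\mathcal{O}$ hitting $W$; once that is stated cleanly, both directions are immediate from the definitions. The one point requiring a word of care is the direction of the union: $f_{\mathcal{O}}(W) = \{f_{\mathcal{O}}(x) : x \in W\}$ is a family of subsets of $A$, and $\bigcup f_{\mathcal{O}}(W)$ means the union of that family inside $A$; I would spell this out explicitly to avoid any ambiguity, and also note that when $W = \phi$ the union is $\phi$, which is trivially finite (consistent with the empty cover case).
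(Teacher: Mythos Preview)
Your proof is correct and follows essentially the same approach as the paper: the paper proves the two inclusions $\bigcup f_{\mathcal{O}}(N(x)) \subseteq \{a : U_a \cap N(x) \neq \phi\}$ and $\{a : U_a \cap N(x) \neq \phi\} \subseteq \bigcup f_{\mathcal{O}}(N(x))$ separately within the forward and reverse directions, whereas you state the equality $\bigcup f_{\mathcal{O}}(W) = \{a \in A : U_a \cap W \neq \phi\}$ once up front and then apply it in both directions. This is a purely presentational streamlining of the same argument.
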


\begin{proof}
	Assume that $\mathcal{O}$ is locally finite.
	Hence, for each $x \in X$ there is an open neighborhood $V_x$ of $x$ that intersects $U_a$ for finitely many $a$ in $A$.
	Thus, the set \linebreak ${ D_x = \{ a \in A : U_a \cap V_x \neq \phi \} }$ is finite for all $x$ in $X$.
	Define an open neighborhood assignment $N : X \rightarrow \tau$ as $N(x) = V_x$.
	For any $y \in N(x)$, consider \[ f_{\mathcal{O}}(y) = \{ a \in A : y \in U_a \} \subseteq \{ a \in A : U_a \cap N(x) \neq \phi \} = D_x \]; hence, $\bigcup f_{\mathcal{O}}(N(x)) = \bigcup\limits_{y \in N(x)} f_{\mathcal{O}}(y) \subseteq D_x$ which implies that $\lvert \bigcup f_{\mathcal{O}}(N(x)) \rvert \leq |D_x| < \omega_0$.
	This proves the sufficiency part.

	Conversely, assume that there exists an open neighborhood assignment $N : X \rightarrow \tau$ \linebreak such that $\lvert \bigcup f_{\mathcal{O}}(N(x)) \rvert < \omega_0$ for all $x$ in $X$.
	Take any $x$ in $X$.
	Consider the set \linebreak ${ D_x = \{ a \in A : N(x) \cap U_a \neq \phi \} }$ and take any $a \in D_x$, then $N(x) \cap U_a \neq \phi$.
	Let $y \in N(x) \cap U_a$, i.e.\ $y \in N(x)$ and $y \in U_a$; hence, $a \in f_{\mathcal{O}}(y) \subseteq \bigcup f_{\mathcal{O}}(N(x))$.
	Since $a$ was taken arbitrarily, then $D_x \subseteq \bigcup f_{\mathcal{O}}(N(x))$ which implies that $|D_x| \leq \lvert \bigcup f_{\mathcal{O}}(N(x)) \rvert < \omega_0$.
	Therefore, $N(x)$ is an open neighborhood of $x$ that intersects $U_a$ for finitely many $a$ in $A$.
	Since $x$ was taken arbitrarily too, then it is clear that $\mathcal{O}$ is locally finite.
	This proves the necessary part.
\end{proof}

\begin{theorem}[\textbf{characterization of paracompactness}]\label{paracompactness-by-companion-maps}
	A topological space $(X, \tau)$ is paracompact iff for every open covering assignment $C : A \rightarrow \tau$, there is another open covering assignment $C_r : D \rightarrow \tau$ such that for all $d$ in $D$, $C_r(d) \subseteq C(a)$ for some $a$ in $A$, and there exists an open neighborhood assignment $N : X \rightarrow \tau$ such that $\lvert \bigcup f_{C_r}(N(x)) \rvert < \omega_0$ for all $x \in X$ where $f_{C_r}$ is the companion map of $C_r$.
\end{theorem}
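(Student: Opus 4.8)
The plan is to reduce the statement to the classical definition of paracompactness (every open cover has a locally finite open refinement) by translating between open covers and open covering assignments, using the dictionary established in Propositions~\ref{open-covering-assignment-to-continuous-map} and~\ref{subfamily-map} together with Lemma~\ref{locally-finite-companion-maps}. The key observation is that an open covering assignment $C:A\rightarrow\tau$ is just an open cover indexed by $A$, and conversely any open cover $\mathcal{O}=\{U_a:a\in A\}$ gives an open covering assignment $a\mapsto U_a$. Moreover, the condition ``$C_r(d)\subseteq C(a)$ for some $a$'' is precisely the assertion that the family $\{C_r(d):d\in D\}$ is an (indexed) open refinement of $\{C(a):a\in A\}$.

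For the forward direction, suppose $X$ is paracompact and let $C:A\rightarrow\tau$ be an open covering assignment. Then $\mathcal{O}=\{C(a):a\in A\}$ is an open cover, so it has a locally finite open refinement $\mathcal{R}=\{V_d:d\in D\}$; I would index $\mathcal{R}$ by the set $D$ itself (i.e.\ take $D$ to be $\mathcal{R}$ or an index set for it) and define $C_r:D\rightarrow\tau$ by $C_r(d)=V_d$. Since $\mathcal{R}$ covers $X$, $C_r$ is an open covering assignment, and since $\mathcal{R}$ refines $\mathcal{O}$, for each $d$ there is $a\in A$ with $C_r(d)=V_d\subseteq C(a)$. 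Finally $\mathcal{R}$ is locally finite, and its companion map (in the sense of the open covering assignment $C_r$) is exactly the $f_{C_r}$ appearing in Lemma~\ref{locally-finite-companion-maps}; applying that lemma to the locally finite open cover $\mathcal{R}$ produces the required open neighborhood assignment $N:X\rightarrow\tau$ with $\lvert\bigcup f_{C_r}(N(x))\rvert<\omega_0$ for all $x$.

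For the converse, let $\mathcal{O}=\{U_a:a\in A\}$ be an arbitrary open cover of $X$. Define the open covering assignment $C:A\rightarrow\tau$ by $C(a)=U_a$. By hypothesis there is an open covering assignment $C_r:D\rightarrow\tau$ with $C_r(d)\subseteq C(a)=U_a$ for some $a=a(d)$, together with an open neighborhood assignment $N$ making $\lvert\bigcup f_{C_r}(N(x))\rvert<\omega_0$ for all $x$. The family $\mathcal{R}=\{C_r(d):d\in D\}$ is then an open cover of $X$ (because $C_r$ is an open covering assignment), it refines $\mathcal{O}$ (because each $C_r(d)$ sits inside some $U_{a(d)}$), and by the other direction of Lemma~\ref{locally-finite-companion-maps} the existence of $N$ with the finiteness condition forces $\mathcal{R}$ to be locally finite. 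Hence $\mathcal{O}$ has a locally finite open refinement, so $X$ is paracompact.

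The only genuinely delicate point—and the step I would be most careful with—is the indexing bookkeeping in Lemma~\ref{locally-finite-companion-maps}: the lemma is stated for the companion map $f_{\mathcal{O}}$ of the open covering assignment \emph{induced by an open cover}, so I must make sure that when I build $C_r$ from a refinement $\mathcal{R}$ I use $\mathcal{R}$'s own index set $D$ as the domain, so that $f_{C_r}$ literally is the map to which the lemma applies. There is also a minor subtlety that a refinement may have a different (possibly smaller) cardinality than the original cover, as the paper itself notes after the definition of refinement; this is harmless here because the characterization only requires \emph{some} open covering assignment $C_r$ with the stated containment property, not a refinement-of-assignments in the restrictive sense. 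Beyond this indexing care, the argument is a routine unwinding of the established correspondences.
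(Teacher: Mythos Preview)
Your proposal is correct and follows essentially the same route as the paper: both directions translate between open covers and open covering assignments and then invoke Lemma~\ref{locally-finite-companion-maps} to pass between local finiteness and the finiteness condition on $\bigcup f_{C_r}(N(x))$. The indexing caution you flag is exactly the only point requiring care, and the paper handles it in the same way you do.
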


\begin{proof}
	Assume that $X$ is a paracompact space and let $C : A \rightarrow \tau$ be an open covering map.
	Therefore, $\{ C(a) : a \in A \}$ is an open cover, and thus it has an open refinement $\{ U_d : d \in D \}$ that is locally finite for some set $D$.
	Hence, $C_r : D \rightarrow \tau$ defined as $C_r(d) = U_d$ for all $d$ in $D$ is an open covering assignment such that $C_r(d) \subseteq C(a)$ for some $a$ in $A$.
	Applying lemma~\ref{locally-finite-companion-maps}, we obtain an open neighborhood assignment $N : X \rightarrow \tau$ satisfying the conclusion where $f_{C_r}$ is the companion map of the open covering assignment $C_r$.

	Conversely, assume that the necessary condition holds.
	Let $\mathcal{O} = \{ U_a : a \in A \}$ be any open cover of $X$, and thus the assignment $C : A \rightarrow \tau$ defined as $C(a) = U_a$ is an open covering assignment.
	Therefore, there is another open covering map $C_r : D \rightarrow \tau$ such that for all $d$ in $D$, $C_r(d) \subseteq C(a)$ for some $a$ in $A$, and there exists an open neighborhood assignment $N : X \rightarrow \tau$ such that $\lvert \bigcup f_{C_r}(N(x)) \rvert < \omega_0$ for all $x \in X$.
	Hence, $\{ C_r(d) : d \in D \}$ is an open refinement of $\mathcal{O}$.
	Thus, by lemma~\ref{locally-finite-companion-maps}, again, $\{ C_r(d) : d \in D \}$ is locally finite which ends the proof.
\end{proof}


\begin{theorem}[\textbf{characterization of metacompactness}]\label{metacompactness-by-companion-maps}
	A topological space $(X, \tau)$ is metacompact iff for every for every open covering assignment $C : A \rightarrow \tau$, there is another open covering assignment $C_r : D \rightarrow \tau$ such that for all $d$ in $D$, $C_r(d) \subseteq C(a)$ for some $a$ in $A$ and that satisfies $\lvert f_{C_r}(x) \rvert < \omega_0$ for all $x \in X$ where $f_{C_r}$ is the companion map of $C_r$.
\end{theorem}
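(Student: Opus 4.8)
The plan is to mirror the proof of Theorem~\ref{paracompactness-by-companion-maps} almost verbatim, replacing the local-finiteness condition with point-finiteness and replacing Lemma~\ref{locally-finite-companion-maps} with the obvious pointwise analogue. The key observation is that an open cover $\mathcal{O} = \{ U_a : a \in A \}$ is point finite iff the companion map $f_{\mathcal{O}}$ of the induced open covering assignment satisfies $\lvert f_{\mathcal{O}}(x) \rvert < \omega_0$ for all $x$ in $X$; this is immediate since, by the construction in Proposition~\ref{open-set-assignment-to-continuous-map}, we have $f_{\mathcal{O}}(x) = \{ a \in A : x \in U_a \}$, which is finite precisely when $x$ lies in only finitely many members of $\mathcal{O}$. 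So there is no need for an auxiliary neighborhood assignment $N$ at all; the point-finiteness is read off directly from the cardinalities of the values of the companion map.

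First I would prove the forward direction: assume $X$ is metacompact and let $C : A \rightarrow \tau$ be an open covering assignment. Then $\{ C(a) : a \in A \}$ is an open cover, so it has a point finite open refinement $\{ U_d : d \in D \}$ for some index set $D$; define $C_r : D \rightarrow \tau$ by $C_r(d) = U_d$. By definition of refinement, for each $d \in D$ there is $a \in A$ with $C_r(d) \subseteq C(a)$. By the observation above applied to the open cover $\{ C_r(d) : d \in D \}$, the companion map $f_{C_r}$ satisfies $\lvert f_{C_r}(x) \rvert = \lvert \{ d \in D : x \in C_r(d) \} \rvert < \omega_0$ for all $x \in X$, which is exactly the required condition.

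Conversely, assume the stated condition holds, and let $\mathcal{O} = \{ U_a : a \in A \}$ be any open cover of $X$; the assignment $C(a) = U_a$ is then an open covering assignment, so there is an open covering assignment $C_r : D \rightarrow \tau$ with $C_r(d) \subseteq C(a)$ for some $a$ (for each $d$) and $\lvert f_{C_r}(x) \rvert < \omega_0$ for all $x$. Then $\{ C_r(d) : d \in D \}$ is an open cover of $X$ that refines $\mathcal{O}$, and applying the observation again it is point finite. Hence $\mathcal{O}$ has a point finite open refinement, so $X$ is metacompact.

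I do not expect any serious obstacle here: the entire content is the elementary fact that $\lvert f_{C_r}(x) \rvert$ counts the number of sets in the refinement containing $x$, which is essentially a restatement of the definition of the companion map from Proposition~\ref{open-set-assignment-to-continuous-map}. The only point deserving a word of care is the direction of the refinement relation — the condition ``$C_r(d) \subseteq C(a)$ for some $a$ in $A$'' encodes that $\{ C_r(d) : d \in D \}$ refines $\{ C(a) : a \in A \}$ as families of sets (not as assignments in the stricter sense of the paper's Definition of refinement), and one should make sure the index sets are allowed to differ, exactly as in Theorem~\ref{paracompactness-by-companion-maps}. If one wishes, the pointwise analogue of Lemma~\ref{locally-finite-companion-maps} can be isolated as a separate short lemma for symmetry with the paracompactness case, but it is short enough to fold directly into the proof.
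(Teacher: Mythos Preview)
Your proposal is correct and follows essentially the same route as the paper's own proof: both directions proceed by translating between an open covering assignment and its induced open cover, and both use directly that $f_{C_r}(x) = \{ d \in D : x \in C_r(d) \}$ so that $\lvert f_{C_r}(x) \rvert < \omega_0$ is literally the point-finiteness condition. The paper likewise does not invoke an analogue of Lemma~\ref{locally-finite-companion-maps} here but reads the finiteness of $f_{C_r}(x)$ straight from the definition, exactly as you do.
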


\begin{proof}
	Assume that $X$ is a metacompact space and let $C : A \rightarrow \tau$ be an open covering map.
	Therefore, $\{ C(a) : a \in A \}$ is an open cover, and thus it has an open refinement $\{ U_d : d \in D \}$ that is point finite for some set $D \subseteq X$.
	Hence, $C_r : D \rightarrow \tau$ defined as $C_r(d) = U_d$ for all $d$ in $D$ is an open covering assignment such that $C_r(d) \subseteq C(a)$ for some $a$ in $A$.
	From the fact that $\{ U_d \}_{ d \in D}$ is point finite, then for all $x \in X$, $x$ is an element of some open sets $\{ U_{d_1(x)}, \dots, U_{d_{n_x}(x)} \}$ in $\{ U_d \}_{ d \in D}$.
	Therefore, $f_{C_r}(x) = \{ d_1(x), \dots, d_{n_x}(x) \}$ from which the conclusion follows.

	Conversely, assume that the necessary condition holds.
	Let $\mathcal{O} = \{ U_a : a \in A \}$ be any open cover of $X$, and thus the assignment $C : A \rightarrow \tau$ defined as $C(a) = U_a$ is an open covering assignment.
	Hence, there is another open covering map $C_r : D \rightarrow \tau$ such that for all $d$ in $D$, $C_r(d) \subseteq C(a)$ for some $a$ in $A$ that satisfies $\lvert f_{C_r}(x) \rvert < \omega_0$ for all $x \in X$.
	In other words, $\{ C_r(d) : d \in D \}$ is an open refinement of $\mathcal{O}$ such that $x \in C_r(d)$ for finitely many $d$ in $D$.
	Therefore, $\{ C_r(d) : d \in D \}$ is point finite which completes the proof.
\end{proof}

Observe that the companion map $f_{C_r}$ in both theorem~\ref{paracompactness-by-companion-maps} and theorem~\ref{metacompactness-by-companion-maps} is naturally related to the concepts of paracompactness and metacompactness since the concepts of ``locally finite'' and ``point finite'' are related to how sets from some family contain points of the space which is what companion maps are about.

\clearpage
\chapter{Applications to the theory of D-spaces}\label{ch:applications-to-the-theory-of-d-spaces}
\section{Preparation}\label{sec:preparation}

To prove our main theorems, we will first need a body of lemmas.

\begin{lemma}\label{refinement-in-d-space}
    A topological space $(X, \tau)$ is a D-space iff every open neighborhood assignment $N : X \rightarrow \tau$ has a neighborhood refinement $N^{*} : X \rightarrow \tau$ with a closed discrete kernel.
\end{lemma}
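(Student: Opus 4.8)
The plan is to prove both directions directly from the definition of a D-space, using the notions of \emph{neighborhood refinement}, \emph{kernel}, and the elementary fact (already noted after Corollary~\ref{subcover-map}) that a kernel $D$ of a neighborhood assignment $M$ satisfies $\bigcup M(D) = X$. The key observation is that ``$D$ is a closed discrete kernel of some neighborhood refinement of $N$'' is essentially a repackaging of ``$D$ is a closed discrete set with $\bigcup N(D) = X$,'' but one has to be a little careful because a neighborhood refinement $N^{*}$ must still satisfy $x \in N^{*}(x)$ for \emph{every} $x \in X$, not just for $x \in D$.

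For the forward direction, suppose $X$ is a D-space and let $N : X \rightarrow \tau$ be an arbitrary open neighborhood assignment. By the D-space property there is a closed discrete set $D$ with $\bigcup_{d \in D} N(d) = X$. I would then define $N^{*} : X \rightarrow \tau$ by $N^{*}(d) = N(d)$ for $d \in D$ and $N^{*}(x) = \{x\} \cup \big(N(x) \setminus \bigcup_{d\in D} N(d)\big)$... no --- more simply, since $\bigcup_{d\in D} N(d) = X$ already, just set $N^{*}(x) = N(x)$ for all $x$; then $N^{*} = N$ is trivially a neighborhood refinement of itself, and $D$ is a closed discrete kernel of $N^{*}$ because $N^{*}\!\restriction_D = N\!\restriction_D$ is an open covering assignment (its image covers $X$). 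So the forward direction is immediate: the witness $D$ for the D-space property is already a closed discrete kernel of $N$ itself.

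For the converse, suppose every open neighborhood assignment $N$ has a neighborhood refinement $N^{*}$ with a closed discrete kernel $D$. Let $N$ be an arbitrary open neighborhood assignment; take such an $N^{*}$ and $D$. Since $D$ is a kernel of $N^{*}$, we have $X = \bigcup_{d\in D} N^{*}(d)$; and since $N^{*}$ is a refinement of $N$, $N^{*}(d) \subseteq N(d)$ for all $d$, hence $X = \bigcup_{d\in D} N^{*}(d) \subseteq \bigcup_{d \in D} N(d) \subseteq X$, so $\bigcup N(D) = X$. As $D$ is closed and discrete, this exhibits the closed discrete set required by the definition of a D-space, so $X$ is a D-space.

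The only point requiring mild care --- and the closest thing to an obstacle --- is making sure the objects line up with the definitions from the excerpt: a ``neighborhood refinement'' is by definition an open neighborhood assignment with the same domain $X$ satisfying both $N^{*}(x)\subseteq N(x)$ and $x \in N^{*}(x)$ for all $x$, and a ``kernel'' of $N^{*}$ is a subset $D\subseteq X$ such that $N^{*}\!\restriction_D$ is an open covering assignment, i.e. $\bigcup_{d\in D} N^{*}(d) = X$. Once these are unwound, both implications are one-line set-theoretic inclusions; in the forward direction one should explicitly remark that $N$ itself serves as its own neighborhood refinement so that no genuine shrinking is needed. I do not anticipate any deeper difficulty.
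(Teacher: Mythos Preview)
Your proposal is correct and matches the paper's proof almost verbatim: for the forward direction the paper also simply takes $N^{*}=N$, and for the converse it uses the same one-line inclusion $X=\bigcup_{d\in D}N^{*}(d)\subseteq\bigcup_{d\in D}N(d)$. The only difference is cosmetic (your initial false start before settling on $N^{*}=N$), so there is nothing to add.
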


\begin{proof}
    Assume that $(X, \tau)$ is a D-space, and set $N^{*} = N$, then the conclusion follows trivially.
    Conversely, let $N$ be an open neighborhood assignment and assume that it has a neighborhood refinement $N^{*}$ such that for some closed discrete set $D \subseteq X$, $\bigcup N^{*}(D)= X$.
    Consider $X = \bigcup\limits_{d \in D} N^{*}(d) \subseteq \bigcup\limits_{d \in D} N(d)$.
    Hence, the conclusion follows.
\end{proof}

\begin{lemma}\label{closed-discrete-subset}
    Let $(X, \tau)$ be a topological space and $D \subseteq X$ be a closed discrete set, then any set $F \subseteq D$ is also closed and discrete.
\end{lemma}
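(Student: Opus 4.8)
The plan is to show directly that any subset $F$ of a closed discrete set $D$ inherits both closedness and discreteness. This is a standard fact, so the proof should be short. First I would establish discreteness: since $D$ is discrete, for each $x \in D$ there is an open set $U_x$ with $U_x \cap D = \{x\}$. For any $x \in F \subseteq D$, the same $U_x$ satisfies $U_x \cap F \subseteq U_x \cap D = \{x\}$, and since $x \in F$ we get $U_x \cap F = \{x\}$. Hence $F$ is discrete.

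Next I would prove that $F$ is closed, which is the step that actually uses the hypothesis that $D$ is closed (discreteness alone would not suffice). I would take an arbitrary point $y \in X \setminus F$ and produce an open neighborhood of $y$ missing $F$. There are two cases. If $y \notin D$, then since $D$ is closed, $X \setminus D$ is an open neighborhood of $y$, and $(X \setminus D) \cap F \subseteq (X \setminus D) \cap D = \phi$. If instead $y \in D \setminus F$, then using discreteness of $D$ there is an open set $U_y$ with $U_y \cap D = \{y\}$; since $y \notin F$, we get $U_y \cap F \subseteq U_y \cap D = \{y\}$ and $y \notin F$ forces $U_y \cap F = \phi$. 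In either case $y$ has an open neighborhood disjoint from $F$, so $X \setminus F$ is open and $F$ is closed.

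I do not expect any genuine obstacle here; the only subtlety worth flagging is that one must split into the two cases $y \notin D$ and $y \in D$ when verifying closedness, because the argument for the second case relies on discreteness while the first relies on $D$ being closed. Combining the two observations shows $F$ is both closed and discrete, completing the proof.
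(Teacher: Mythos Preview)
Your proof is correct and follows essentially the same approach as the paper's own proof: discreteness is inherited immediately from $U_x \cap F \subseteq U_x \cap D = \{x\}$, and closedness is verified by the same two-case split on whether the point lies in $D$ or in $X \setminus D$. There is nothing to add.
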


\begin{proof}
    Let $F \subseteq D$ be any set, then we will first show that $F$ is discrete.
    Since $x \in F$ implies that $x \in D$, then there exists an open set $U \in \tau$ containing $x$ such that $U \cap D = \{ x \}$ which implies that $U \cap F = \{ x \}$.
    To prove that $F$ is closed, take any $x \in X \setminus F$.
    Either $x \in D$ or $x \in X \setminus D$.
    If $x \in D$, then there exists an open set $U \in \tau$ containing $x$ such that $U \cap D = \{ x \}$.
    Since $F \subseteq D$ and $x \notin F$, then $U \cap F = \phi$; hence, $x \in U \subseteq X \setminus F$.
    On the other hand, if $x \in X \setminus D$, then it is clear that there exists an open set $U \in \tau$ such that $x \in U \subseteq X \setminus D \subseteq X \setminus F$.
    Either way, we conclude that $F$ is closed.
\end{proof}


\begin{lemma}\label{closed-discrete-refinement}
	Let $(X, \tau)$ be a topological space and $N : X \rightarrow \tau$ be an open neighborhood assignment with a closed discrete kernel $D \subseteq X$.
	Define a function $N^{*} : X \rightarrow \mathcal{P}(X)$ as follows. $N^{*}(x) = N(x)$ for all $x \in X \setminus D$ and $N^{*}(x) = N(x) \setminus (D \setminus \{ x \})$ for all $x \in D$.
	The function $N^{*}$ is a neighborhood refinement of $N$ and $D$ is a kernel of $N^{*}$.
\end{lemma}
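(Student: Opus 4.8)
The plan is to verify directly that $N^{*}$ satisfies the three requirements in the definition of a neighborhood refinement of $N$ — namely that $N^{*}(x)\in\tau$, that $x\in N^{*}(x)$, and that $N^{*}(x)\subseteq N(x)$, for every $x\in X$ — and then to check that $\bigcup N^{*}(D)=X$, which is exactly the assertion that $N^{*}\restriction_{D}$ is an open covering assignment, i.e.\ that $D$ is a kernel of $N^{*}$.

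Two of the three refinement conditions are immediate. The inclusion $N^{*}(x)\subseteq N(x)$ holds by construction in both cases of the definition. For $x\in N^{*}(x)$: if $x\in X\setminus D$ this is just $x\in N(x)$, and if $x\in D$ then $x\in N(x)$ while $x\notin D\setminus\{x\}$, so $x\in N(x)\setminus(D\setminus\{x\})=N^{*}(x)$. The only point needing an argument is that $N^{*}(x)$ is open when $x\in D$; here I would invoke Lemma~\ref{closed-discrete-subset}: since $D$ is closed and discrete, so is its subset $D\setminus\{x\}$, hence $X\setminus(D\setminus\{x\})\in\tau$, and therefore $N^{*}(x)=N(x)\cap\bigl(X\setminus(D\setminus\{x\})\bigr)$ is open as an intersection of two open sets. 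Together with $N^{*}(x)=N(x)\in\tau$ for $x\notin D$, this shows $N^{*}$ maps into $\tau$, so $N^{*}$ is a neighborhood refinement of $N$.

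It remains to show $D$ is a kernel of $N^{*}$, i.e.\ $\bigcup_{d\in D}N^{*}(d)=X$. Given $x\in X$, if $x\in D$ then $x\in N^{*}(x)$ by the above, so $x\in\bigcup N^{*}(D)$. If $x\notin D$, then since $D$ is a kernel of $N$ there is $d\in D$ with $x\in N(d)$; as $x\notin D$ we have $x\notin D\setminus\{d\}$, hence $x\in N(d)\setminus(D\setminus\{d\})=N^{*}(d)\subseteq\bigcup N^{*}(D)$. Thus $\bigcup N^{*}(D)=X$, so $N^{*}\restriction_{D}$ is an open covering assignment and $D$ is a kernel of $N^{*}$. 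There is no genuine obstacle in this argument; the single subtle step is the appeal to Lemma~\ref{closed-discrete-subset} to guarantee that removing the finitely-or-infinitely-many points of $D\setminus\{x\}$ still leaves $N^{*}(x)$ open.
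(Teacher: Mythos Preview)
Your proof is correct and follows essentially the same approach as the paper: both arguments invoke Lemma~\ref{closed-discrete-subset} to see that $D\setminus\{x\}$ is closed (hence $N^{*}(x)$ is open for $x\in D$), and both verify that $D$ is a kernel of $N^{*}$ by the same case split on whether $x\in D$ or $x\notin D$. Your presentation is slightly more systematic in listing the three refinement conditions up front, but the substance is identical.
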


\begin{proof}
	Since for any $d \in D$, $D \setminus \{ d \} \subseteq D$, then by lemma~\ref{closed-discrete-subset}, $D \setminus \{ d \}$ is also \linebreak  closed and discrete;
	hence, the set $N(d) \setminus (D \setminus \{ d \})$ is open.
	Moreover, \linebreak ${ N(d) \setminus (D \setminus \{ d \}) = (N(d) \setminus D) \cup \{ d \} }$ for all $d \in D$.
	Therefore, $d \in N(d)$ for all $d \in D$.
	If $x \in X \setminus D$, then $N^{*}(x) = N(x)$ which is an open neighborhood of $x$.
	Hence, $N^{*}$ an open neighborhood assignment.
	It is clear that $N^{*}(x) \subseteq N(x)$ for all $x \in X$, thus, $N^{*}$ is a neighborhood refinement of $N$.
	It remains to show that $D$ is also a kernel of $N^{*}$.
	Take any $x \in X$, then either $x \in X \setminus D$ or $x \in D$.
	If $x \in X \setminus D$, then since $\bigcup N(D) = X$, we conclude that $x \in N(d)$ for some $d \in D$.
	Since $x \notin D$ and $N^{*}(d) = N(d) \setminus (D \setminus \{ d \})$, then obviously $x \in N^{*}(d)$.
	If $x = d \in D$, then $x \in N(d)$; hence, $x \in N(d) \setminus (D \setminus \{ d \}) = N^{*}(d)$.
	Therefore, $\bigcup N^{*}(D) = X$.
\end{proof}

\begin{lemma}\label{companion-maps-d-spaces}
	A topological space $(X, \tau)$ is a D-space iff for any continuous map \linebreak ${ f : (X, \tau) \rightarrow \left(\mathcal{P}(X), \tau_{\mathcal{S}(X)}\right) }$ such that $f(x) \in \mathcal{U}_{X}(x)$ for all $x \in X$, there exists a continuous neighborhood refinement map $f_{*}$ of $f$ and a closed discrete set $D \subseteq X$ such that the map $g : (X, \tau) \rightarrow (\mathcal{P}(D), \tau_{\mathcal{S}(D)})$ defined as $g(x) = f_{*}(x) \cap D$ is continuous and satisfies $g^{-1}\left(\bigcup \mathcal{U}(D)\right) = X \iff g^{-1}(\{\phi\}) = \phi$.
\end{lemma}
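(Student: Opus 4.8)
The plan is to treat this statement as simply the companion-map transcription of lemma~\ref{refinement-in-d-space} — the characterization that $(X,\tau)$ is a D-space iff every open neighborhood assignment admits a neighborhood refinement with a closed discrete kernel — using the dictionary assembled in sections~\ref{sec:the-companion-map} and~\ref{sec:subfamilies-and-refinements}. The correspondences I would invoke are: an open neighborhood assignment $N:X\rightarrow\tau$ and its companion map $f$ are two descriptions of the same datum (proposition~\ref{neighborhood-assignment-to-continuous-map}); neighborhood refinements of $N$ correspond to continuous neighborhood refinement maps of $f$, via proposition~\ref{refinement-companion-map} and the definition of a neighborhood refinement map; and for $D\subseteq X$, the restriction $N^{*}\restriction_{D}$ has companion map $x\mapsto f_{*}(x)\cap D$ by proposition~\ref{subfamily-map}, which by corollary~\ref{subcover-map} together with proposition~\ref{open-covering-assignment-to-continuous-map} is an open covering assignment — i.e.\ $D$ is a kernel of $N^{*}$ — precisely when that map $g$ is continuous and $g^{-1}(\{\phi\})=\phi$ (equivalently $g^{-1}(\bigcup\mathcal{S}(D))=X$, since $\bigcup\mathcal{S}(D)=\mathcal{P}(D)\setminus\{\phi\}$). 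So I would thread these equivalences through the two implications.

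For the forward direction I would start from a continuous $f:(X,\tau)\rightarrow(\mathcal{P}(X),\tau_{\mathcal{S}(X)})$ with $f(x)\in\mathcal{U}_{X}(x)$ for all $x$; by proposition~\ref{neighborhood-assignment-to-continuous-map} the assignment $N(x)=f^{-1}(\mathcal{U}_{X}(x))$ is an open neighborhood assignment with companion map $f$. Applying the D-space property to $N$ yields a closed discrete $D$ with $\bigcup N(D)=X$, so $N\restriction_{D}$ is an open covering assignment, i.e.\ $D$ is a kernel of $N$. Then I would take $f_{*}=f$, which is a continuous neighborhood refinement map of $f$ (realized by $N^{*}=N$, a refinement of itself), observe that $g(x)=f_{*}(x)\cap D=f(x)\cap D$ is the companion map of $N\restriction_{D}$ by proposition~\ref{subfamily-map} and corollary~\ref{open-set-assignment-to-continuous-map-is-unique} hence continuous, and conclude $g^{-1}(\{\phi\})=\phi$ from corollary~\ref{subcover-map} (equivalently proposition~\ref{open-covering-assignment-to-continuous-map}), since $N\restriction_{D}$ covers.

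For the converse I would take an arbitrary open neighborhood assignment $N:X\rightarrow\tau$; its companion map $f$ is continuous with $f(x)\in\mathcal{U}_{X}(x)$ by proposition~\ref{neighborhood-assignment-to-continuous-map}, so the hypothesis supplies a continuous neighborhood refinement map $f_{*}$ of $f$ and a closed discrete $D$ with $g(x)=f_{*}(x)\cap D$ continuous and $g^{-1}(\{\phi\})=\phi$. By the definition of a neighborhood refinement map, $f_{*}$ is the companion map of some neighborhood refinement $N^{*}$ of $N$; by proposition~\ref{subfamily-map}, $g$ is then the companion map of $N^{*}\restriction_{D}$; and by corollary~\ref{subcover-map}, the condition $g^{-1}(\{\phi\})=\phi$ forces $N^{*}\restriction_{D}$ to be an open covering assignment, i.e.\ $\bigcup N^{*}(D)=X$. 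Hence $N$ has a neighborhood refinement $N^{*}$ with closed discrete kernel $D$, and lemma~\ref{refinement-in-d-space} gives that $(X,\tau)$ is a D-space.

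I do not expect a genuinely hard step; the only points that need care are bookkeeping ones. First, I must use the uniqueness in corollary~\ref{open-set-assignment-to-continuous-map-is-unique} to identify $g$ with \emph{the} companion map of the relevant restriction rather than merely some continuous map realizing $x\mapsto f_{*}(x)\cap D$. Second, I should keep in mind that the phrase ``neighborhood refinement map of $f$'' already carries an underlying neighborhood refinement $N^{*}$ of $N$ (in particular $f_{*}(x)\in\mathcal{U}_{X}(x)$ automatically), which is exactly what makes lemma~\ref{refinement-in-d-space} applicable verbatim in the converse. (Lemma~\ref{closed-discrete-refinement} could be used in the forward direction to exhibit a genuinely proper refinement, but it is not needed.)
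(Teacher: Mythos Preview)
Your proposal is correct and follows essentially the same approach as the paper's proof: in the forward direction take $f_{*}=f$, apply the D-space property to $N(x)=f^{-1}(\mathcal{U}_X(x))$ to obtain a closed discrete kernel $D$, and invoke corollary~\ref{subcover-map}; in the converse recover $N^{*}$ from $f_{*}$, use corollary~\ref{subcover-map} to see that $D$ is a kernel of $N^{*}$, and finish with lemma~\ref{refinement-in-d-space}. Your bookkeeping remarks (on uniqueness via corollary~\ref{open-set-assignment-to-continuous-map-is-unique} and on the definition of neighborhood refinement map) are apt but not strictly necessary beyond what the paper itself invokes.
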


\begin{proof}
	Assume that $(X, \tau)$ is a D-space, then let $f : (X, \tau) \rightarrow (\mathcal{P}(X), \tau_{\mathcal{S}(X)})$ be any continuous map such that $f(x) \in \mathcal{U}_{X}(x)$, i.e.\ $x \in f(x)$.
	Define $N : X \rightarrow \tau$ as ${ N(x) = f^{-1}(\mathcal{U}_{X}(x)) }$ for all $x \in X$.
	Clearly, $N$ is an open neighborhood assignment, and thus it has a closed discrete kernel $D \subseteq X$.
	Moreover, let $f_{*} = f$.
	By corollary~\ref{subcover-map}, the map ${ g : (X, \tau) \rightarrow (\mathcal{P}(D), \tau_{\mathcal{S}(D)}) }$ defined as $g(x) = f(x) \cap D = f_{*} \cap D$ is continuous and satisfies $g^{-1}(\{\phi\}) = \phi$.
	Conversely, assume that the necessary condition holds.
	We will show that $X$ is a D-space.
	Let $N : X \rightarrow \tau$ be any open neighborhood assignment and let $f$ be the companion map of $N$.
	Clearly, $f$ is continuous and $f(x) \in \mathcal{U}_{X}(x)$ for all $x \in X$.
	From the hypothesis, there exists a continuous neighborhood refinement map $f_{*}$ and a closed discrete set $D \subseteq X$ such that the function $g$ defined as in the hypothesis is continuous such that $g^{-1}(\{\phi\}) = \phi$.
	Using corollary~\ref{subcover-map}, again, we conclude that $D$ is a kernel for the open neighborhood assignment $N^{*}:X \rightarrow \tau$ where $N^{*}(x) = f_{*}^{-1}(\mathcal{U}_{X}(x))$.
	Hence, from lemma~\ref{refinement-in-d-space}, $X$ is a D-space.
\end{proof}

Lemma~\ref{companion-maps-d-spaces} implies that we can use the companion maps as an alternative to open neighborhood assignments when discussing D-spaces.

\begin{lemma}\label{forcing-point-in-kernel}
	Let $(X, \tau)$ be a topological space and $d_0$ be a point in $X$.
	If ${ N : X \rightarrow \tau }$ is an open neighborhood assignment such that only open set containing $d_0$ among ${ \{ N(x) : x \in X \} }$ is $N(d_0)$, then $d_0 \in D$ for any kernel $D$ of $N$.
	Moreover, for any neighborhood refinement $N^{*} : X \rightarrow \tau$ of $N$, it also holds that $d_0 \in D$ for any kernel $D$ of $N^{*}$.
\end{lemma}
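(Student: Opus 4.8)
The plan is to argue directly from the definition of a kernel, after fixing the intended reading of the hypothesis. Here ``the only open set containing $d_0$ among $\{N(x):x\in X\}$ is $N(d_0)$'' must be read as the indexed statement $d_0\in N(x)\Rightarrow x=d_0$, i.e.\ $d_0\notin N(x)$ for every $x\ne d_0$; the merely set-theoretic reading would make the claim false (for instance on a two-point space with the indiscrete topology, where $N(0)=N(1)=X$ and $\{1\}$ is a kernel not containing $0$).

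For the first assertion, let $D$ be any kernel of $N$. By definition $N\restriction_{D}$ is an open covering assignment, so $\bigcup_{d\in D}N(d)=X$; in particular $d_0\in N(d)$ for some $d\in D$. Since the only set of the form $N(x)$ that contains $d_0$ is $N(d_0)$, the relation $d_0\in N(d)$ forces $d=d_0$, and hence $d_0=d\in D$.

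For the ``moreover'' part, let $N^{*}:X\rightarrow\tau$ be a neighborhood refinement of $N$; thus $N^{*}$ is an open neighborhood assignment with $N^{*}(x)\subseteq N(x)$ for all $x$. I would first verify that $N^{*}$ inherits the hypothesis at $d_0$: if $x\ne d_0$ then $d_0\notin N(x)$, and since $N^{*}(x)\subseteq N(x)$ also $d_0\notin N^{*}(x)$, so $N^{*}(d_0)$ is the only set among $\{N^{*}(x):x\in X\}$ containing $d_0$. Applying the first assertion with $N^{*}$ in place of $N$ then gives $d_0\in D$ for every kernel $D$ of $N^{*}$.

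Both parts are short once the definitions are unwound, so there is no substantial obstacle; the only point needing care is the reading of the hypothesis as an indexed (rather than set-theoretic) statement, together with the elementary observation that passing to a neighborhood refinement shrinks each $N(x)$ and hence preserves the property ``$d_0$ is covered only by the $d_0$-th set.''
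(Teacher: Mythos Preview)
Your proof is correct and follows essentially the same approach as the paper's: both argue that since a kernel covers $X$, some $N(d)$ (resp.\ $N^{*}(d)$) must contain $d_0$, and the hypothesis then forces $d=d_0$ via the inclusion $N^{*}(x)\subseteq N(x)$. Your packaging of the ``moreover'' part---showing $N^{*}$ inherits the indexed hypothesis and then invoking the first part---is a slightly cleaner reorganization of what the paper does by contradiction, and your remark clarifying the indexed reading of the hypothesis is a worthwhile addition.
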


\begin{proof}
	For $N$, assume that there is a kernel $D$ such that $d_0 \notin D$, then from the fact that $D$ is a kernel and $d_0 \in X$, there exists some $d$ in $D$ such that $d_0 \in N(d)$.
	Clearly, $d \neq d_0$ from our assumption that $d_0 \notin D$.
	This contradicts the fact that the only open set containing $d_0$ among $\{ N(x) : x \in X \}$ is $N(d_0)$.
	Moreover, let $N^{*}: X \rightarrow \tau$ be any neighborhood refinement of $N$, i.e.\ $x \in N^{*}(x) \subseteq N(x)$ for all $x$ in $X$.
	Assume that $D^{*}$ is a kernel of $N^{*}$ such that $d_0 \notin D^{*}$.
	Clearly, $X = \bigcup N^{*}(D^{*}) \subseteq N(D^{*})$, and hence $D^{*}$ is a kernel of of $N$ too.
	Similarly, since $d_0 \in X$ and $D^{*}$ is a kernel of $N^{*}$, then there exists some $d$ in $D^{*}$ such that $d_0 \in N^{*}(d)$ which implies that $d_0 \in N(d)$.
	However, $d_0 \neq d$ since $d_0 \notin D^{*}$.
	This also contradicts the fact that the only open set containing $d_0$ among $\{ N(x) : x \in X \}$ is $N(d)$.
\end{proof}

\begin{lemma}\label{closed-discrete-kernel}
	Let $(X, \tau)$ be a $T_1$ topological space and $N : X \rightarrow \tau$ be an open neighborhood assignment.
	If a set $D \subseteq X$ satisfies
	\begin{enumerate}
		\item $\bigcup N(D) = \bigcup\limits_{d \in D} N(d) = X$, i.e.\ $D$ is a kernel of $N$
		\item for all $d$ in $D$, it holds that $d \notin N(x)$ for all $x \in D \setminus \{ d \}$
	\end{enumerate}
	, then $D$ is closed and discrete.
\end{lemma}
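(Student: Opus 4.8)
The plan is to prove discreteness and closedness separately: in each case I would exhibit, for a given point, an explicit open neighbourhood meeting $D$ in at most one point, letting condition~(2) do the heavy lifting and invoking the $T_1$ hypothesis only to deal with points outside $D$.

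First I would establish discreteness. Fix $d \in D$. Since $N$ is an open neighbourhood assignment, $N(d)$ is an open set containing $d$, so it suffices to show $N(d) \cap D = \{d\}$. Suppose toward a contradiction that some $e \in D$ with $e \neq d$ lies in $N(d)$. Applying condition~(2) with the point $e$ in the role of $d$, and noting $d \in D \setminus \{e\}$, we obtain $e \notin N(d)$, a contradiction. Hence $N(d) \cap D = \{d\}$, and since $d$ was arbitrary, $D$ is discrete. (Observe that this part uses only that $N$ is a neighbourhood assignment, not condition~(1) nor the $T_1$ axiom.)

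Next I would establish closedness, which is where condition~(1) and the $T_1$ axiom enter. Take any $x \in X \setminus D$. By condition~(1), $D$ is a kernel of $N$, so $x \in N(d)$ for some $d \in D$. By exactly the argument of the previous paragraph (again applying condition~(2) with the point $d$), we have $N(d) \cap D = \{d\}$. Since $x \notin D$, we have $x \neq d$; and since $X$ is $T_1$, the singleton $\{d\}$ is closed, so $U = N(d) \setminus \{d\}$ is open. Then $x \in U$ and $U \cap D = \bigl(N(d) \cap D\bigr) \setminus \{d\} = \emptyset$, so $x \notin \overline{D}$. As $x$ was an arbitrary point of $X \setminus D$, the set $D$ is closed, which completes the argument.

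There is no serious obstacle in this proof; the only points requiring care are to apply condition~(2) with the correct index — namely the point whose neighbourhood is being examined — and to remember that deleting the single point $d$ from the open set $N(d)$ is legitimate precisely because the space is $T_1$. It is worth noting, and I would remark, that if one wanted only discreteness, neither condition~(1) nor the separation hypothesis would be needed.
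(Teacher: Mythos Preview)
Your proof is correct and follows essentially the same approach as the paper's: both show $N(d)\cap D=\{d\}$ directly from condition~(2) to get discreteness, then for $x\notin D$ pick $d$ with $x\in N(d)$ via condition~(1) and use $T_1$ to delete $d$, yielding an open neighbourhood of $x$ disjoint from $D$. Your additional commentary on which hypotheses are needed where is accurate and a nice touch.
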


\begin{proof}
	Take any $d$ in $D$.
	For any $d\,'$ in $D$ distinct from $d$, we have $d\,' \notin N(d)$ since ${ d \in D \setminus \{d\,'\} }$.
	Hence, $(D \setminus \{d\}) \cap N(d) = \phi$; however, $d \in N(d)$, and thus ${ N(d) \cap D = \{ d \} }$ which proves that $D$ is discrete.
	To prove that $D$ is closed, let $x \in X \setminus D$.
	Since ${ x \in X = \bigcup N(D) }$, then $x \in N(d)$ for some $d$ in $D$.
	Moreover, since $X$ is $T_1$, $U = N(d) \setminus \{ d \}$ is an open set containing $x$.
	Clearly, $U \cap D = \phi$, and hence $x \in U \subseteq X \setminus D$ which ends the proof.
\end{proof}

\section{A characterization of D-spaces}\label{sec:a-characterization-of-d-spaces}

In the following theorem, we show that we can express the property D solely through continuous maps.

\begin{theorem}\label{main-theorem-d-space}
	A $T_1$ topological space $(X, \tau)$ is a D-space iff for any continuous map $f : (X, \tau) \rightarrow (\mathcal{P}(X), \tau_{\mathcal{S}(X)})$ such that $f(x) \in \mathcal{U}_{X}(x)$ for all $x \in X$, there is a continuous neighborhood refinement map $f_{*} : (X, \tau) \rightarrow (\mathcal{P}(X), \tau_{\mathcal{S}(X)})$ of $f$ and a set $D \subseteq X$ such that:

	\begin{enumerate}
		\item\label{itm:main-theorem-d-space-1} The map $\mathcal{I} : ( D, \tau_D) \rightarrow(\mathcal{P}(D), \tau_{\mathcal{S}(D)})$ from the subspace topology on $D$ to the puf topology induced by $D$, defined as
        \[
            \mathcal{I}( d ) = \{ d \}
        \]
        is an injective continuous map (monomorphism).
        \item\label{itm:main-theorem-d-space-2} The map $g : (X, \tau) \rightarrow (\mathcal{P}(D), \tau_{\mathcal{S}(D)})$ defined as $g(x) = f_{*}(x) \cap D$ is a continuous map such that $g^{-1}(\{ \phi \}) = \phi$.
        \item\label{itm:main-theorem-d-space-3} The set $\Delta(D) = D^2$ is a subspace of the pullback $P$ of the two continuous functions $g$ and $\mathcal{I}$.
        Hence, $g(d) = \mathcal{I}(d) = \{d\}$ for all $d$ in $D$.
	\end{enumerate}
\end{theorem}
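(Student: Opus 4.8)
The plan is to see through the categorical packaging: items~\ref{itm:main-theorem-d-space-1}--\ref{itm:main-theorem-d-space-3} jointly say nothing more than that the open neighborhood assignment $N^{*}$ with companion map $f_{*}$ has $D$ as a closed discrete kernel. Thus the theorem strengthens lemma~\ref{companion-maps-d-spaces}, and I would prove it by the same pattern, additionally invoking lemma~\ref{closed-discrete-refinement} to adjust the kernel and lemma~\ref{closed-discrete-kernel} to secure closedness and discreteness of $D$. Throughout I pass between an open neighborhood assignment and its unique companion map via proposition~\ref{neighborhood-assignment-to-continuous-map} and corollary~\ref{open-set-assignment-to-continuous-map-is-unique}, recalling that the companion map of an open set assignment $O : A \rightarrow \tau$ is $f_{O}(x) = \{ a \in A : x \in O(a) \}$, and between refinements and refinement maps via proposition~\ref{refinement-companion-map}.

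For the forward implication, assume $(X, \tau)$ is a D-space and let $f : (X, \tau) \rightarrow (\mathcal{P}(X), \tau_{\mathcal{S}(X)})$ be continuous with $f(x) \in \mathcal{U}_{X}(x)$ for all $x$. Then $N(x) = f^{-1}(\mathcal{U}_{X}(x))$ is an open neighborhood assignment whose companion map is $f$, hence $N$ has a closed discrete kernel $D$. Applying lemma~\ref{closed-discrete-refinement} produces a neighborhood refinement $N^{*}$ of $N$ that still has $D$ as a kernel and, by the explicit formula $N^{*}(d) = N(d) \setminus (D \setminus \{ d \})$ for $d \in D$, satisfies $d' \notin N^{*}(d)$ for all distinct $d, d' \in D$. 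Put $f_{*} = f_{N^{*}}$; by proposition~\ref{refinement-companion-map} it is a continuous neighborhood refinement map of $f$. Now I would verify the three items. For~\ref{itm:main-theorem-d-space-1}: discreteness of $D$ makes $\{d\}$ open in $\tau_{D}$ for each $d \in D$, and since $\mathcal{I}^{-1}(\mathcal{U}_{D}(d)) = \{d\}$, the map $\mathcal{I}$ is continuous; it is injective because $\{d\} = \{d'\}$ forces $d = d'$. For~\ref{itm:main-theorem-d-space-2}: $D$ being a kernel of $N^{*}$ means $N^{*}\restriction_{D}$ is an open covering assignment, so corollary~\ref{subcover-map} gives that $g(x) = f_{*}(x) \cap D$ is continuous with $g^{-1}(\{\phi\}) = \phi$. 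For~\ref{itm:main-theorem-d-space-3}: a direct computation from the formula for $N^{*}$ gives $f_{*}(d) \cap D = \{d\}$ for every $d \in D$, i.e.\ $g(d) = \mathcal{I}(d) = \{d\}$; hence the diagonal $\Delta(D)$ is contained in, and so is a subspace of, the pullback $P = \{ (x,d) \in X \times D : g(x) = \mathcal{I}(d) \}$.

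For the converse, assume the stated condition and let $N : X \rightarrow \tau$ be an arbitrary open neighborhood assignment with companion map $f$. Since $f$ is continuous and $f(x) \in \mathcal{U}_{X}(x)$, the hypothesis supplies a continuous neighborhood refinement map $f_{*}$ of $f$ and a set $D$ satisfying items~\ref{itm:main-theorem-d-space-1}--\ref{itm:main-theorem-d-space-3}. Set $N^{*}(x) = f_{*}^{-1}(\mathcal{U}_{X}(x))$, an open neighborhood assignment refining $N$ by proposition~\ref{refinement-companion-map}. Item~\ref{itm:main-theorem-d-space-2} together with corollary~\ref{subcover-map} shows $N^{*}\restriction_{D}$ is an open covering assignment, i.e.\ $\bigcup N^{*}(D) = X$, so $D$ is a kernel of $N^{*}$; and item~\ref{itm:main-theorem-d-space-3}, unwound, reads $f_{*}(d) \cap D = \{d\}$, that is $d \notin N^{*}(x)$ whenever $x \in D \setminus \{d\}$. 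Since $X$ is $T_{1}$, lemma~\ref{closed-discrete-kernel} now gives that $D$ is closed and discrete. Hence every open neighborhood assignment $N$ has a neighborhood refinement with a closed discrete kernel, and lemma~\ref{refinement-in-d-space} concludes that $(X, \tau)$ is a D-space.

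The only computations are $\mathcal{I}^{-1}(\mathcal{U}_{D}(d)) = \{d\}$ and the identities $f_{*}(d) \cap D = \{d\}$ in each direction, all routine. The one point requiring care is organizational: one must recognize that the three items are not independent but jointly encode the single statement ``$D$ is a closed discrete kernel of the refinement'', that item~\ref{itm:main-theorem-d-space-1} is actually a consequence of items~\ref{itm:main-theorem-d-space-2} and~\ref{itm:main-theorem-d-space-3} together with the $T_{1}$ hypothesis (so it is retained only to record the monomorphism/pullback picture), and that $T_{1}$ is used exactly once, inside lemma~\ref{closed-discrete-kernel}, to upgrade discreteness of the kernel to closedness.
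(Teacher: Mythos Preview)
Your proposal is correct and follows essentially the same approach as the paper: in the forward direction you build $N^{*}$ via lemma~\ref{closed-discrete-refinement} and verify the three items directly, and in the converse you unwind item~\ref{itm:main-theorem-d-space-3} to $f_{*}(d)\cap D=\{d\}$ and invoke lemma~\ref{closed-discrete-kernel}. The only cosmetic differences are that the paper spells out the pullback as the equalizer of $g\circ\pi_1$ and $\mathcal{I}\circ\pi_2$ and argues $g(d)=\{d\}$ by contradiction, and in the converse concludes via lemma~\ref{companion-maps-d-spaces} rather than lemma~\ref{refinement-in-d-space}; your closing observation that item~\ref{itm:main-theorem-d-space-1} is redundant given items~\ref{itm:main-theorem-d-space-2}--\ref{itm:main-theorem-d-space-3} and $T_1$ is a nice extra that the paper does not make explicit.
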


The following commutative diagram illustrates theorem~\ref{main-theorem-d-space}.

\begin{center}
    \begin{tikzpicture}
        \node (P)  {$P$};
        \node (D)  [right of=P] {$D$};
        \node (PD) [below of=D] {$\mathcal{P}(D)$};
        \node (X)  [below of=P] {$X$};
        \draw[->] (P) to node {$p_2$} (D);
        \draw[->] (D) to node {$\mathcal{I}$} (PD);
        \draw[->] (P) to node [swap] {$p_1$} (X);
        \draw[->] (X) to node [swap] {$g$} (PD);
    \end{tikzpicture}
    \[
        P \supseteq \Delta(D)
    \]
\end{center}

\begin{proof}
	Assume that $X$ is a D-space.
	Let $f : (X, \tau) \rightarrow (\mathcal{P}(X), \tau_{\mathcal{S}(X)})$ be a continuous map such that $f(x) \in \mathcal{U}_{X}(x)$ for all $x \in X$.
	Define the assignment ${ N : X \rightarrow \tau }$ as ${ N(x) = f^{-1}(\mathcal{U}_{X}(x)) }$ for all $x \in X$, then $N$ is clearly an open neighborhood assignment.
	Therefore, it has a closed discrete kernel $D \subseteq X$.
	Applying lemma~\ref{closed-discrete-refinement}, we obtain a neighborhood refinement $N^{*}$ of $N$ such that $N^{*}(x) = N(x)$ for all $x \in X \setminus D$ and \linebreak ${ N^{*}(x) = N(x) \setminus (D \setminus \{ x \}) }$ for all $x \in D$.
	Let $f_{*}$ be the companion map of the open neighborhood assignment $N^{*}$.
	Applying proposition~\ref{refinement-companion-map}, we conclude that $f_{*}$ is a neighborhood refinement map of $f$.
	First, we will prove~\ref{itm:main-theorem-d-space-1}.

	Since $D$ is a discrete set, then the subspace topology of $D$ is the discrete topology on $D$.
	Hence, $\{ d \}$ is open for all $d \in D$.
	To prove that $\mathcal{I}$ is injective, let $\mathcal{I}(d_1) = \mathcal{I}(d_2)$ for $d_1, d_2 \in D$.
	Thus, $\{ d_1 \} = \{ d_2 \}$ which implies that $d_1 = d_2$.
	It is obvious that $\mathcal{I}$ is continuous since any subset of $D$ is open in the subspace topology on $D$.
	Second, by applying corollary~\ref{subcover-map} on $f_{*}$ and $D$, then~\ref{itm:main-theorem-d-space-2} also holds.

	Finally, we will prove~\ref{itm:main-theorem-d-space-3}.
	Since the category $\textbf{Top}$ has all finite products and equalizers, then the pullback of $g$ and $\mathcal{I}$ definitely exists (see Awodey, 2010)$^{\text{\cite{awodey}}}$.
	We will proceed by calculating this pullback.
	First, we take the product topology on $X \times D$ of $(X, \tau)$ and $(D, \tau_{D})$.
	Let $\pi_1 : X \times D \rightarrow X$ and $\pi_2 : X \times D \rightarrow D$ be the two projection maps of the product topology.
	Consider the equalizer $E$ of $g \circ \pi_1$ and $\mathcal{I} \circ \pi_2$.

    \begin{center}
        \begin{tikzpicture}
            \node (E)  {$E$};
            \node (XD) [node distance=2cm, right of=E] {$X \times D$};
            \node (PD) [node distance=3cm, right of=XD] {$\mathcal{P}(D)$};
            \draw[->] (E) to node {$\iota$} (XD);
            \draw[->] (XD.8) to node {$g \circ \pi_1$} (PD.174);
            \draw[->] (XD.351) to node [swap] {$\mathcal{I} \circ \pi_2$} (PD.190);
        \end{tikzpicture}
    \end{center}

	We claim that $\Delta(D) = D^{2}$ is a subspace of $E$, i.e.\ $\Delta(D) \subseteq E$.
	Since for all $d \in D$, $\mathcal{I} \circ \pi_2(d, d) = \mathcal{I}(d) = \{ d \}$, we must prove that $g \circ \pi_2(d, d) = \{ d \}$.
	By the definition of $g$, $g(d) = f_{*}(d) \cap D$.
	Since $d \in f_{*}(d)$, then $\{ d \} \subseteq f_{*}(d) \cap D$.
	To show that $\{ d \} = f_{*}(d) \cap D$, assume that there exists $d\,' \in f_{*}(d) \cap D$ such that $d\,' \neq d$.
	Hence, $d\,' \in D$ and $d\,' \in f_{*}(d)$; thus, $f_{*}(d) \in \mathcal{U}_{X}(d\,')$, i.e.\ $d \in f^{-1}_{*}(\mathcal{U}_{X}(d\,'))$.
	However, \[ f^{-1}_{*}(\mathcal{U}_{D}(d\,')) = N^{*}(d\,') = N(d\,') \setminus (D \setminus \{ d\,' \}) \] which implies that $d \in N(d\,') \setminus (D \setminus \{ d\,' \})$.
	Consider the following
	\begin{align*}
    	\begin{aligned}
			d \in D \cap \left( N(d\,') \setminus (D \setminus \{ d\,' \}) \right) & \Longrightarrow d \in D \cap \left( (N(d\,') \setminus D) \cup \{ d\,' \} \right) \\
			& \Longrightarrow d \in \left( D \cap (N(d\,') \setminus D) \right) \cup \left( D \cap \{ d\,' \} \right) \\
			& \Longrightarrow d \in \phi \cup \{d\,'\} = \{d\,'\}
      	\end{aligned}
	\end{align*}

	As a consequence, $d = d\,'$ which contradicts the assumption.
	Therefore, we have ${ g(d) = \{ d \} = \mathcal{I}(d) }$ which implies that $\Delta(D)$ is indeed a subspace of $E$.

	Conversely, assume that the necessary condition holds.
	We will show that $X$ is a D-space.
	Let $f:(X, \tau) \rightarrow (\mathcal{P}(X), \tau_{\mathcal{S}(X)})$ be any continuous map such that $f(x) \in \mathcal{U}_{X}(x)$, then there is a neighborhood refinement $f_{*}$ of $f$ and a set $D \subseteq X$ such that~\ref{itm:main-theorem-d-space-1},~\ref{itm:main-theorem-d-space-2}, and~\ref{itm:main-theorem-d-space-3} holds.
	It suffices to show that $D$ is closed and discrete, then by applying lemma~\ref{companion-maps-d-spaces} on $f_{*}$ and $D$, we can infer that $X$ is a D-space.

    Again, the pullback $P$ in~\ref{itm:main-theorem-d-space-3} is just the equalizer of $g \circ \pi_1$ and $\mathcal{I} \circ \pi_2$, where $\pi_1$ and $\pi_2$ are the projection functions of the product space $X \times D$.
    From~\ref{itm:main-theorem-d-space-3}, $\Delta(D) \subseteq P$.
    Let $d \in D$, then $(d, d) \in \Delta(D)$.
    Hence, $g(\pi_1(d, d)) = \mathcal{I}(\pi_2(d, d))$, i.e.\ $g(d) = \mathcal{I}(d) = \{ d \}$ which implies that $d \in g^{-1}(\mathcal{U}_{D}(d))$.
    We claim that if $d\,' \neq d$ in $D$, then $d\,' \notin g^{-1}(\mathcal{U}_{D}(d))$.
    To prove this claim assume not, i.e.\ $d\,' \in g^{-1}(\mathcal{U}_{D}(d))$ for some $d\,' \neq d$ in $D$.
    Hence, ${ g(d\,') \in \mathcal{U}_{D}(d) }$; however, since $(d\,', d\,') \in \Delta(D)$, then $g(d\,') = \mathcal{I}(d\,') = \{ d\,' \}$.
    This means that $d \in g(d\,') = \{ d\,' \}$, and thus $d = d\,'$, a contradiction.
    Therefore, for any $d \in D$, $g^{-1}(\mathcal{U}_{D}(d))$ is a neighborhood of $d$ which satisfies $g^{-1}(\mathcal{U}_{D}(d)) \cap D = \{ d \}$.
    Define an open neighborhood assignment as $N^{*}(x) = f^{-1}_{*}(\mathcal{U}_{X}(x))$ for all $x \in X$, then from~\ref{itm:main-theorem-d-space-2} and by corollary~\ref{subcover-map}, $D$ is a kernel for $N^{*}$.
    Moreover, $N^{*}(d) = g^{-1}(\mathcal{U}_{D}(d))$ for all $d \in D$.
    Hence, $N^{*}(d) \cap D = \{ d \}$ for all $d \in D$.
    By lemma~\ref{closed-discrete-kernel}, the desired conclusion follows.

\end{proof}

\begin{corollary}\label{main-theorem-d-space-corollary}
	A $T_1$ topological space $(X, \tau)$ is a D-space iff for any continuous map $f : (X, \tau) \rightarrow (\mathcal{P}(X), \tau_{\mathcal{S}(X)})$ such that $f(x) \in \mathcal{U}_{X}(x)$ for all $x \in X$, there is a continuous neighborhood refinement map $f_{*} : (X, \tau) \rightarrow (\mathcal{P}(X), \tau_{\mathcal{S}(X)})$ of $f$ and a set $D \subseteq X$ such that the map $g:(X, \tau) \rightarrow (\mathcal{P}(D), \tau_{\mathcal{S}(D)})$ defined as $g(x) = f_{*}(x) \cap D$ is continuous and satisfies $g^{-1}(\{\phi\}) = \phi$ together with $g(d) = \{d\}$ for all $d$ in $D$.
\end{corollary}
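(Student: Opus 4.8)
The plan is to treat Corollary~\ref{main-theorem-d-space-corollary} as the ``de-categorified'' form of Theorem~\ref{main-theorem-d-space}: once one observes that conditions~\ref{itm:main-theorem-d-space-1} and~\ref{itm:main-theorem-d-space-3} of that theorem carry no more content than the single equation $g(d)=\{d\}$ for all $d\in D$, the corollary is essentially the theorem with its categorical wrapping removed. I would prove it directly, reusing the two computations already carried out in the proof of Theorem~\ref{main-theorem-d-space}, together with Lemmas~\ref{companion-maps-d-spaces} and~\ref{closed-discrete-kernel}.

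For necessity, let $X$ be a D-space and $f:(X,\tau)\rightarrow(\mathcal{P}(X),\tau_{\mathcal{S}(X)})$ continuous with $f(x)\in\mathcal{U}_{X}(x)$ for all $x$. Exactly as in Theorem~\ref{main-theorem-d-space}, set $N(x)=f^{-1}(\mathcal{U}_{X}(x))$, choose a closed discrete kernel $D$ of the open neighborhood assignment $N$, form the neighborhood refinement $N^{*}$ supplied by Lemma~\ref{closed-discrete-refinement}, and let $f_{*}$ be the companion map of $N^{*}$; Proposition~\ref{refinement-companion-map} makes $f_{*}$ a neighborhood refinement map of $f$, and Corollary~\ref{subcover-map} (applied to $f_{*}$ and $D$) makes $g(x)=f_{*}(x)\cap D$ continuous with $g^{-1}(\{\phi\})=\phi$. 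Finally $g(d)=\{d\}$: since $d\in f_{*}(d)$ we have $\{d\}\subseteq f_{*}(d)\cap D$, and if $d'\in f_{*}(d)\cap D$ then $d\in f_{*}^{-1}(\mathcal{U}_{X}(d'))=N^{*}(d')=N(d')\setminus(D\setminus\{d'\})$, so $d\in D\cap\big(N(d')\setminus(D\setminus\{d'\})\big)=\{d'\}$, forcing $d'=d$. This is the same identity that underlies part~\ref{itm:main-theorem-d-space-3} of the theorem.

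For sufficiency, assume the condition and let $f$ be arbitrary with $f(x)\in\mathcal{U}_{X}(x)$; take the $f_{*}$, $D$ and $g$ it provides. By Lemma~\ref{companion-maps-d-spaces} it suffices to show $D$ is closed and discrete (the biconditional $g^{-1}(\bigcup\mathcal{U}(D))=X\iff g^{-1}(\{\phi\})=\phi$ is automatic, since $\{\phi\}$ and $\bigcup\mathcal{U}(D)$ partition $\mathcal{P}(D)$, and we are given $g^{-1}(\{\phi\})=\phi$). Put $N^{*}(x)=f_{*}^{-1}(\mathcal{U}_{X}(x))$; this is an open neighborhood assignment because $f_{*}$ is a neighborhood refinement map, and $D$ is a kernel of $N^{*}$ by Corollary~\ref{subcover-map}. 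If $d,d'\in D$ with $d'\neq d$ and $d\in N^{*}(d')=f_{*}^{-1}(\mathcal{U}_{X}(d'))$, then $d'\in f_{*}(d)$, hence $d'\in f_{*}(d)\cap D=g(d)=\{d\}$, a contradiction; so $d\notin N^{*}(x)$ for every $x\in D\setminus\{d\}$. Lemma~\ref{closed-discrete-kernel} (here the $T_{1}$ hypothesis is used) now gives that $D$ is closed and discrete, and Lemma~\ref{companion-maps-d-spaces} concludes that $X$ is a D-space.

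There is no real obstacle here; the only step deserving attention is the converse, where discreteness of $D$ --- implicit in conditions~\ref{itm:main-theorem-d-space-1} and~\ref{itm:main-theorem-d-space-3} of Theorem~\ref{main-theorem-d-space} but absent from the hypotheses of the corollary --- has to be reinstated. It is recovered immediately from continuity of $g$ together with $g(d)=\{d\}$, since then $g^{-1}(\mathcal{U}_{D}(d))\cap D=\{d\}$ is relatively open in $D$; the rest is routine bookkeeping, matching the plain equation $g(d)=\{d\}$ against the equalizer/pullback formulation of the theorem.
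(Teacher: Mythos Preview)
Your proof is correct and matches the paper's approach essentially line for line: the paper dispatches the forward direction by citing Theorem~\ref{main-theorem-d-space} (whose proof contains precisely the computation you spell out), and for the converse it defines $N^{*}(x)=f_{*}^{-1}(\mathcal{U}_{X}(x))$, uses Corollary~\ref{subcover-map} to see that $D$ is a kernel, reads off from $g(d)=\{d\}$ that $\{a\in D:d\in N^{*}(a)\}=\{d\}$, and invokes Lemma~\ref{closed-discrete-kernel} and Lemma~\ref{companion-maps-d-spaces} exactly as you do.
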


\begin{proof}
	The sufficiency part is a direct consequence of theorem~\ref{main-theorem-d-space}.
	Conversely, assume that the necessary condition holds.
	We only need to show that the set $D$ is closed and discrete, then by applying lemma~\ref{companion-maps-d-spaces}, we obtain that $X$ is a D-space.
	To prove this fact, let $N^{*}$ be the associated open neighborhood assignment of $f_{*}$.
	Since ${ g^{-1}(\{ \phi \}) = \phi }$, then from corollary~\ref{subcover-map} the assignment $N^{*}\restriction_{D}$ is an open covering assignment, i.e.\ ${ \bigcup N^{*}(D) = X }$.
	Take any $d \in D$, then $g(d) = \{ d \}$ implies that \[ g(d) = f_{*}(d) \cap D = \{ a \in X : d \in N^{*}(a) \} \cap D = \{ a \in D : d \in N^{*}(a) \} = \{ d \} \] which in turn implies that $D \cap N^{*}(d) = \{d\}$.
	Hence, by lemma~\ref{closed-discrete-kernel}, the set $D$ is closed and discrete which ends the proof.
\end{proof}

\begin{remark}\label{main-theorem-d-space-corollary-remark}
	The necessary condition of corollary~\ref{main-theorem-d-space-corollary} implies that the set $D \subseteq X$ is closed and discrete and that it is also a kernel of the neighborhood assignment $N^{*}$ which is associated with the companion map $f_{*}$.
	Moreover, $g(d) = \{ d \}$ is equivalent to $\{ a \in D : d \in N^{*}(a) \} = \{ d \}$ which is equivalent to saying that the for $d$ in $D$, the only open set containing $d$ among $\{ N^{*}(a) : a \in D \}$ is $N^{*}(d)$.
\end{remark}

After looking at theorem~\ref{main-theorem-d-space}, we can see that the essential information related to the property D exists in a subcategory of the coslice category $(X, \tau) \downarrow \text{\textbf{Top}}$.
This subcategory contains all the continuous maps $f:(X, \tau) \rightarrow (\mathcal{P}(X), \tau_{\mathcal{S}(X)})$ such that $f(x) \in \mathcal{U}_{X}(x)$ and all the continuous maps $g:(X, \tau) \rightarrow (\mathcal{P}(D), \tau_{\mathcal{S}(D)})$, for any subset $D$ of $X$, defined as $g(x) = f(x) \cap D$ with the property $g^{-1}(\{ \phi \}) = \phi$.
The morphisms between these maps were discussed in corollaries~\ref{refinement-map-coslice} and~\ref{subfamily-map-coslice}.

\section{On Lindel\"of D-spaces}\label{sec:on-lindelof-d-spaces}

We discussed in section~\ref{sec:a-discussion-on-two-open-problems} Arhangel'skii's approach to question~\ref{qstn:is-lindelof-d}.
One of the reasons we chose to develop the theory of companion maps, is to see whether we can prove that an image of a Lindel\"of D-space is a D-space.
Our approach to carry out a proof of this statement could be described as follows.
Given any regular Lindel\"of D-space $(X, \tau_X)$, a space $(Y, \tau_Y)$, and a surjective continuous map (epimorphism in \textbf{Top}) $t : X \rightarrow Y$, we want to show that $(Y, \tau_Y)$ is a Lindel\"of D-space.
First, since the Lindel\"of property is invariant under continuous surjections, then $(Y, \tau_Y)$ is Lindel\"of.
The essential part is to prove that $(Y, \tau_Y)$ is a D-space.

Given any continuous map $f_Y : (Y, \tau_Y) \rightarrow (\mathcal{P}(Y), \tau_{\mathcal{S}(Y)})$, we want to find a set ${ D_Y \subseteq Y }$ and a continuous refinement map $f_{Y}^{*}$ of $f_Y$ that satisfies the necessary condition of theorem~\ref{main-theorem-d-space}.
To do this we want to use the fact that $(X, \tau_X)$ is a D-space.
In other words, we want to find for each continuous map $f_Y$, a continuous map $f_X : (X, \tau_X) \rightarrow (\mathcal{P}(X), \tau_{\mathcal{S}(X)})$, and in turn, a continuous refinement map $f_X^{*}$ of $f_X$ and a set $D_X \subseteq X$ such that both $f_X^{*}$ and $D_X$ satisfy the necessary condition of theorem~\ref{main-theorem-d-space}, from where we can construct the required $f_{Y}^{*}$ and $D_Y$ through the properties of the continuous maps and using the fact that both $(X, \tau_X)$ and $(Y, \tau_Y)$ are Lindel\"of.
Unfortunately, we didn't succeed in our attempts.
To be more specific, we saw the possibility that either more assumptions should be made or that the puf topology should have better properties (which will be discussed in the later chapter).
As a side note here, sometimes, it is useful to use the following proposition regarding Lindel\"of D-spaces.

\begin{proposition}\label{closed-discrete-in-lindelof-is-countable}
	Let $(X, \tau)$ be a Lindel\"of space, then any closed discrete subset $D$ of $X$ is countable.
\end{proposition}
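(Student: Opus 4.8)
The plan is to manufacture an open cover of $X$ from the discreteness of $D$, exploit the closedness of $D$ to make the complement open, and then let the Lindel\"of property do the work. This is the same bookkeeping used earlier in Section~\ref{sec:history-and-background} to establish that $e(X) \leq L(X)$; here it is specialized to the case $L(X) = \omega_0$.

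First I would invoke discreteness of $D$: for each $d \in D$ there is an open set $U_d \in \tau$ with $U_d \cap D = \{ d \}$. Next, since $D$ is closed, $X \setminus D$ is open, so the family $\mathcal{O} = \{ U_d : d \in D \} \cup \{ X \setminus D \}$ is an open cover of $X$, because every point of $D$ lies in its own $U_d$ and every point of $X \setminus D$ lies in $X \setminus D$.

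Then I would apply the Lindel\"of property to extract a countable subfamily $\mathcal{O}^{*} \subseteq \mathcal{O}$ that still covers $X$. The crucial observation is that $\mathcal{O}^{*}$ must contain $U_d$ for every $d \in D$: the point $d$ does not lie in $X \setminus D$, and for $d\,' \neq d$ we have $d \notin U_{d\,'}$ since $U_{d\,'} \cap D = \{ d\,' \}$; hence the only member of $\mathcal{O}$ containing $d$ is $U_d$, which is therefore forced into any subcover of $\mathcal{O}$. Consequently the assignment $d \mapsto U_d$ is an injection of $D$ into $\mathcal{O}^{*}$, and since $\mathcal{O}^{*}$ is countable, $D$ is countable.

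There is no real obstacle in this argument; the only point deserving a word of care is that $d \mapsto U_d$ is genuinely injective as a map into $\mathcal{O}$, which follows from $d \in U_d$ together with $d \notin U_{d\,'}$ for $d\,' \neq d$. Alternatively, one can simply remark that the claim is precisely $e(X) \leq L(X) = \omega_0$ read off from the extent computation already carried out, so the proposition needs nothing beyond what has been said.
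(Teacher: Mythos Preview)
Your proof is correct and follows essentially the same approach as the paper: build the open cover $\{U_d : d \in D\} \cup \{X \setminus D\}$, extract a countable subcover by Lindel\"of, observe that each $U_d$ is forced into any subcover, and conclude via the injection $d \mapsto U_d$. Your remark that this is just the inequality $e(X) \leq L(X)$ specialized to $L(X) = \omega_0$ is apt and mirrors the paper's own earlier discussion.
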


\begin{proof}
	Let $D$ be a closed discrete subset of $X$.
	Since $D$ is discrete, then for each $d$ in $D$, we can find an open neighborhood $U_d$ such that $U_d \cap D = \{d\}$.
	Since $D$ is closed, then $X \setminus D$ is an open set.
	Hence, the family $\mathcal{O} = \{ U_d : d \in D \} \cup \{ X \setminus D \}$ is an open cover of $X$.
	From the Lindel\"of property, $\mathcal{O}$ reduces to a countable subcover $\mathcal{O}^{*}$.
	Observe that $U_d \in \mathcal{O}^{*}$ for all $d$ in $D$.
	To prove this, assume not, i.e.\ that $U_{d_0} \notin \mathcal{O}^{*}$ for some $d_0$ in $D$.
	Since $\mathcal{O}^{*}$ is an open cover, then there must be an open neighborhood $V_{d_0} \in \mathcal{O}^{*}$ of $d_0$.
	Thus, $V_{d_0} \in \mathcal{O}$ which implies that $V_{d_0} = X \setminus D$ or $V_{d_0} \in  \{ U_d : d \in D \}$.
	The first possibility $V_{d_0} = X \setminus D$ leads to a contradiction since $d_0 \in V_{d_0} = X \setminus D$.
	If $V_{d_0} \in  \{ U_d : d \in D \}$, then $V_{d_0} = U_{d_1}$ for some $d_1 \neq d_0$ or otherwise it will contradict the assumption that $U_{d_0} \notin \mathcal{O}^{*}$.
	This implies that $d_0 \in U_{d_1}$, but $d_0 \in U_{d_1} \cap D = \{d_1\}$ which means that $d_0 = d_1$, a contradiction.
	Therefore, there is an injection from $D$ into $\mathcal{O}^{*}$, i.e.\ $D$ is countable.
\end{proof}

As a direct consequence of proposition~\ref{closed-discrete-in-lindelof-is-countable}, a characterization could be obtained for Lindel\"of D-spaces by adjusting the statement of theorem~\ref{main-theorem-d-space} so that the set $D$ is always countable.

Now we will discuss some general guidelines for any attempt to construct a counterexample.
First, we introduce a new separation property called $\kappa$-exclusiveness in the following definition, then we will see that Lindel\"of spaces with this separation property under certain $\kappa$ are D-spaces.

\begin{definition}
	Let $(X, \tau)$ be a topological space, then $X$ is said to be a $\kappa$-exclusive space if for any $x$ in $X$ and any open neighborhood $U$ of $x$, it holds that for any $A \subset U$ satisfying $x \notin A$ and $|A| \leq \kappa$, there exists an open neighborhood $V$ of $x$ such that $V \subseteq U \setminus A$.
\end{definition}

\begin{theorem}\label{set-in-k-exclusive-is-closed}
	A topological space $(X, \tau)$ is $\kappa$-exclusive space iff any set $A \subseteq X$ of cardinality at most $\kappa$ is closed.
\end{theorem}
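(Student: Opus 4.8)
The plan is to prove both implications directly from the definitions; the statement is essentially an unwinding of what ``closed'' means point by point, so no machinery beyond the definitions is needed.

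For the forward implication I would assume $X$ is $\kappa$-exclusive and fix an arbitrary $A \subseteq X$ with $|A| \le \kappa$. To show $A$ is closed it suffices to show $X \setminus A$ is open, so I take an arbitrary $x \in X \setminus A$ and apply the $\kappa$-exclusive condition with the open neighborhood $U = X$, which is legitimate because $A \subseteq X$, $x \notin A$, and $|A| \le \kappa$. This yields an open neighborhood $V$ of $x$ with $V \subseteq X \setminus A$, so every point of $X \setminus A$ is interior; hence $X \setminus A$ is open and $A$ is closed. The only point needing a word of care is whether the ``$A \subset U$'' in the definition is meant strictly: since $x \in X \setminus A$ forces $A \neq X$, we have $A \subsetneq X = U$ anyway, and the degenerate case $A = X$ is trivial since $X$ is always closed.

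For the reverse implication I would assume every subset of cardinality at most $\kappa$ is closed, and fix $x \in X$, an open neighborhood $U$ of $x$, and $A \subseteq U$ with $x \notin A$ and $|A| \le \kappa$. By hypothesis $A$ is closed, so $X \setminus A$ is open; then $V := U \setminus A = U \cap (X \setminus A)$ is open, contains $x$ (because $x \in U$ and $x \notin A$), and satisfies $V \subseteq U \setminus A$, which is exactly what the $\kappa$-exclusive condition demands.

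I expect no genuine obstacle: the argument is a routine translation between the local phrasing ``every point of the complement of $A$ has a neighborhood avoiding $A$'' and the global phrasing ``the complement of $A$ is open.'' The two spots deserving a sentence of attention are the choice $U = X$ in the forward direction and the strict-versus-nonstrict reading of the inclusion symbol, both of which are harmless.
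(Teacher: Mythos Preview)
Your proposal is correct and matches the paper's approach exactly: the forward direction applies $\kappa$-exclusiveness with $U = X$ to each point of $X \setminus A$, with the same handling of the trivial cases $A = \emptyset$ and $A = X$. You even go further than the paper, which simply says ``the converse is easy to show,'' by writing out the reverse implication via $V = U \cap (X \setminus A)$; this is precisely the intended argument.
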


\begin{proof}
	Assume that $(X, \tau)$ is $\kappa$-exclusive, and let $A$ be a subset of $X$ such that $\lvert A \rvert \leq \kappa$.
	If $A = \phi$ or $A = X$, then the conclusion is trivial.
	Otherwise, assume that $\phi \neq A \neq X$.
	Since $X$ is an open set, then for each $x \in X \setminus A$, $X$ is an open neighborhood of $x$.
	Thus, by $\kappa$-exclusiveness, for each $x \in X \setminus A$, there exists an open neighborhood $V_x$ of $x$ such that $V_x \subseteq X \setminus A$.
	Therefore, $\bigcup \{ V_x : x \in X \setminus A \} = X \setminus A$ which implies that $X \setminus A$ is an open set in $X$.
	Hence, $A$ is closed.
	The converse is easy to show.
\end{proof}

\begin{corollary}\label{exclusive-t1}
	A $\kappa$-exclusive topological space $(X, \tau)$ with $\kappa \geq 1$ is a $T_1$ space.
\end{corollary}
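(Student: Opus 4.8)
The plan is to reduce the statement immediately to Theorem~\ref{set-in-k-exclusive-is-closed}, which already characterizes $\kappa$-exclusiveness by the property that every subset of cardinality at most $\kappa$ is closed. So the only work is to connect that characterization with the standard characterization of the $T_1$ axiom.

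First I would recall that a topological space is $T_1$ if and only if every singleton $\{x\}$ is a closed set (equivalently, every finite subset is closed). This is the characterization of $T_1$ I will use; it is standard and may be assumed from Engelking (1989).

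Next, since the hypothesis gives $\kappa \geq 1$, every singleton $\{x\} \subseteq X$ has cardinality $1 \leq \kappa$, so it qualifies as a set "of cardinality at most $\kappa$" in the statement of Theorem~\ref{set-in-k-exclusive-is-closed}. Applying that theorem to the $\kappa$-exclusive space $(X,\tau)$, we conclude that $\{x\}$ is closed in $X$ for every $x \in X$. Since $x$ was arbitrary, all singletons are closed, and therefore $(X,\tau)$ is a $T_1$ space, which is exactly the desired conclusion.

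There is essentially no obstacle here: the corollary is a one-line consequence of the preceding theorem together with the elementary fact that closedness of all singletons is equivalent to $T_1$. The only thing to be careful about is the role of the hypothesis $\kappa \geq 1$, which is precisely what guarantees that singletons fall under the scope of Theorem~\ref{set-in-k-exclusive-is-closed}; if $\kappa$ could be $0$ the argument would not apply, so that hypothesis is used and should be pointed out explicitly.
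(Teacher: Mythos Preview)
Your proposal is correct and follows exactly the same approach as the paper's proof: apply Theorem~\ref{set-in-k-exclusive-is-closed} to conclude that every singleton $\{x\}$ (having cardinality $1 \leq \kappa$) is closed, and then invoke the standard characterization of $T_1$ spaces. The paper's proof is simply a terser version of what you wrote.
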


\begin{proof}
	Since for any $x$ in $X$, the cardinality of $\{ x \}$ is 1, then from theorem~\ref{set-in-k-exclusive-is-closed}, every singleton is closed.
\end{proof}

\begin{theorem}\label{lindelof-k-exclusive-is-d}
	Let $(X, \tau)$ be a Lindel\"of $\kappa$-exclusive space with $\kappa \geq \omega_0$, then $X$ is a D-space.
\end{theorem}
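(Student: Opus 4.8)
The plan is to use the classical recursive construction that shows GLS-spaces are D-spaces (Theorem~\ref{gls-is-d}), exploiting the fact that in a $\kappa$-exclusive space with $\kappa \geq \omega_0$ every countable set is closed (Theorem~\ref{set-in-k-exclusive-is-closed}), so that whatever countable set we build along the way is automatically closed, and the Lindel\"of property guarantees that the recursion terminates after countably many steps. Concretely, let $N : X \rightarrow \tau$ be an arbitrary open neighborhood assignment. I would run the transfinite recursion sketched in Section~\ref{sec:a-discussion-on-two-open-problems}: pick $x_0 \in X$ arbitrarily, and having chosen $\{x_\alpha : \alpha < \xi\}$, pick $x_\xi \in X \setminus \bigcup_{\alpha < \xi} N(x_\alpha)$ if this set is nonempty, otherwise stop. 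As argued there, this must stop at some ordinal $\gamma$, and $\{N(x_\alpha) : \alpha < \gamma\}$ is an open cover of $X$. Since $X$ is Lindel\"of, this cover has a countable subcover, i.e.\ there is a countable $D_0 = \{x_{\alpha_n} : n \in \omega\} \subseteq \{x_\alpha : \alpha < \gamma\}$ with $\bigcup N(D_0) = X$; thus $D_0$ is a countable kernel of $N$.

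The key point is that $D_0$ need not be discrete (the set $D$ obtained this way is ordered by $\preceq$ but possibly not order-isomorphic to $\omega$, which is exactly the gap in Borges--Wehrly). To fix this, I would pass to a neighborhood refinement. For $d = x_{\alpha_n} \in D_0$, note that by construction $x_{\alpha_m} \notin N(x_{\alpha_n})$ whenever $\alpha_m > \alpha_n$, so the only elements of $D_0$ that can lie in $N(x_{\alpha_n})$ are among $\{x_{\alpha_0}, \dots, x_{\alpha_{n-1}}\}$ together with $x_{\alpha_n}$ itself. Hence $D_0 \cap N(x_{\alpha_n})$ is finite. Now define $N^{*}(d) = N(d) \setminus (D_0 \setminus \{d\})$ for $d \in D_0$ and $N^{*}(x) = N(x)$ otherwise. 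Since $D_0 \setminus \{d\}$ is countable, it is closed by Theorem~\ref{set-in-k-exclusive-is-closed}, so $N^{*}(d)$ is open and contains $d$; this is exactly the construction of Lemma~\ref{closed-discrete-refinement} except that here closedness of $D_0 \setminus \{d\}$ comes for free from $\kappa$-exclusiveness rather than from $D_0$ being closed discrete a priori. By (the proof of) Lemma~\ref{closed-discrete-refinement}, $N^{*}$ is a neighborhood refinement of $N$ and $D_0$ is still a kernel of $N^{*}$, and moreover $N^{*}(d) \cap D_0 = \{d\}$ for every $d \in D_0$: indeed $d \in N^{*}(d)$, and for $d' \in D_0 \setminus \{d\}$ we have $d' \in N(d) \Longrightarrow d' \in D_0 \setminus \{d\}$, so $d' \notin N^{*}(d)$.

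Finally, I would invoke Lemma~\ref{closed-discrete-kernel}: $X$ is $T_1$ by Corollary~\ref{exclusive-t1} (as $\kappa \geq \omega_0 \geq 1$), $D_0$ is a kernel of $N^{*}$ by the above, and condition (2) of that lemma holds because $d \notin N^{*}(x)$ for $x \in D_0 \setminus \{d\}$ (by the same computation $d \in N^{*}(x) \Longrightarrow d \in N(x) \Longrightarrow d \in D_0 \setminus \{x\}$, contradicting $d \notin D_0 \setminus \{x\}$); wait — more carefully, $d \in N(x)$ with $x \in D_0$, $x \neq d$ forces $d \in D_0 \setminus \{x\}$, and then $d$ is removed in forming $N^{*}(x)$, so indeed $d \notin N^{*}(x)$. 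Therefore $D_0$ is closed and discrete and is a kernel of $N^{*}$, hence of $N$ since $N^{*}$ refines $N$; by Lemma~\ref{refinement-in-d-space}, $X$ is a D-space. The main obstacle to watch is making sure the termination-plus-Lindel\"of argument genuinely yields a \emph{countable} kernel $D_0$ before any discreteness is claimed — all the discreteness is then manufactured afterward from countability via $\kappa$-exclusiveness — and that the bookkeeping identifying $D_0 \cap N(d)$ as finite (equivalently, controlling which earlier points survive in the refinement) is done using the construction order of the $x_\alpha$, not the enumeration of the subcover.
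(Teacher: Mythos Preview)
Your argument is correct, but you have taken a substantially longer route than the paper, and the extra work buys you nothing. The entire transfinite recursion in your first paragraph is superfluous: the paper simply observes that $\{N(x) : x \in X\}$ is itself an open cover of $X$, applies Lindel\"of directly to it to get a countable $D \subseteq X$ with $\bigcup N(D) = X$, and then proceeds exactly as you do from that point on --- define $N^{*}(d) = N(d) \setminus (D \setminus \{d\})$, note this is open because $D \setminus \{d\}$ is countable and hence closed by Theorem~\ref{set-in-k-exclusive-is-closed}, and conclude. You seem to have imported the recursion from the Borges--Wehrly discussion in Section~\ref{sec:a-discussion-on-two-open-problems}, where it was needed to impose a well-order satisfying $x_\beta \notin N(x_\alpha)$ for $\alpha < \beta$; but here $\kappa$-exclusiveness makes \emph{every} countable set closed, so no ordering control is required and your observation that $D_0 \cap N(x_{\alpha_n})$ is finite is never actually used.

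A second, minor difference: the paper obtains closedness of $D$ immediately from Theorem~\ref{set-in-k-exclusive-is-closed} and discreteness from $N^{*}(d) \cap D = \{d\}$, then finishes with Lemma~\ref{refinement-in-d-space}. You instead route through Lemma~\ref{closed-discrete-kernel}, which also works (and uses the $T_1$ property via Corollary~\ref{exclusive-t1}), but again this is more machinery than needed since closedness of $D$ is already free. In short, your proof is sound, but the moral of this theorem is precisely that $\kappa$-exclusiveness with $\kappa \geq \omega_0$ trivializes the Borges--Wehrly obstruction, and the cleanest proof reflects that by skipping the recursion entirely.
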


\begin{proof}
	Let $N : X \rightarrow \tau$ be any open neighborhood assignment, then $\{ N(x) : x \in X \}$ is an open cover of $X$.
	Hence, from the Lindel\"of property of $X$, it reduces to a countable subcover $\{ N(d) : d \in D \}$ where $D$ is countable, i.e.\ $\lvert D \rvert = \omega_0$.
	Since $X$ is $\kappa$-exclusive with $\kappa \geq \omega_0$, then $D$ is closed.
	Define a neighborhood refinement $N^{*}$ of $N$ as follows.
	Let $N^{*}(x) = N(x)$ for all $x \notin D$ and $N^{*}(d) = N(d) \setminus (D \setminus \{ d\})$.
	It is clear that $N^{*}(d) \cap D$ = $\{ d \}$ for all $d \in D$.
	Thus, $D$ is discrete.
	Therefore, by lemma~\ref{refinement-in-d-space}, $X$ is a D-space.
\end{proof}

We can conclude, from theorem~\ref{lindelof-k-exclusive-is-d}, that if some Lindel\"of space is not a D-space, then it is not a $\kappa$-exclusive space with $\kappa \geq \omega_0$.
If it is a $T_1$ space, then it is necessarily a $\kappa$-exclusive space with $\kappa < \omega_0$.
This conclusion may help as a general guideline in constructing counterexamples.
We establish a more general result in the following theorem.

\begin{theorem}
	Let $(X, \tau)$ be a $\kappa$-exclusive space of Lindel\"of degree $L(X) = \kappa$, then $X$ is a D-space.
\end{theorem}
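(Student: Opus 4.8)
The plan is to mimic the proof of Theorem~\ref{lindelof-k-exclusive-is-d}, replacing the use of the Lindel\"of property by the weaker hypothesis that $L(X) = \kappa$, and taking advantage of the fact that the cardinal $\kappa$ controlling the covering degree is exactly the cardinal for which small sets are closed. First I would take an arbitrary open neighborhood assignment $N : X \rightarrow \tau$ and observe that $\{N(x) : x \in X\}$ is an open cover of $X$. Since $L(X) = \kappa$, this cover has a subcover $\{N(d) : d \in D\}$ with $|D| \leq \kappa$. Now the key point: because $X$ is $\kappa$-exclusive, Theorem~\ref{set-in-k-exclusive-is-closed} gives that every subset of cardinality at most $\kappa$ is closed, so $D$ is closed in $X$; and moreover for each $d \in D$ the set $D \setminus \{d\}$ has cardinality at most $\kappa$ and is therefore closed, so $N(d) \setminus (D \setminus \{d\})$ is open.

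Then I would define the neighborhood refinement $N^{*}$ exactly as before: $N^{*}(x) = N(x)$ for $x \notin D$, and $N^{*}(d) = N(d) \setminus (D \setminus \{d\})$ for $d \in D$. Since $d \in N(d)$ and $d \notin D \setminus \{d\}$, we have $d \in N^{*}(d)$, so $N^{*}$ is indeed an open neighborhood assignment, and clearly $N^{*}(x) \subseteq N(x)$ for all $x$, so $N^{*}$ is a neighborhood refinement of $N$. By construction $N^{*}(d) \cap D = \{d\}$ for every $d \in D$, which shows $D$ is discrete; combined with $D$ being closed, $D$ is a closed discrete set. Finally, since $\{N(d) : d \in D\}$ already covered $X$ and $X \setminus (D\setminus\{d\}) \supseteq N^{*}(d) \supseteq (N(d)\setminus D)\cup\{d\}$, one checks $\bigcup N^{*}(D) = X$ just as in Lemma~\ref{closed-discrete-refinement}, so $D$ is a kernel of $N^{*}$. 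Invoking Lemma~\ref{refinement-in-d-space}, $X$ is a D-space.

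The only genuinely new ingredient compared to Theorem~\ref{lindelof-k-exclusive-is-d} is noticing that $L(X) = \kappa$ is precisely strong enough: the subcover it produces has size at most $\kappa$, and $\kappa$-exclusiveness makes sets of that size closed, so the two hypotheses dovetail. There is no real obstacle here; the main thing to be careful about is the cardinality bookkeeping — one needs $|D| \leq \kappa$ rather than $|D| < \kappa$, which is exactly what the definition of Lindel\"of degree $L(X) = \kappa$ delivers (every open cover has a subcover of cardinality $\leq \kappa$), and one needs $D \setminus \{d\}$ to still have cardinality at most $\kappa$, which is immediate. Everything else is a verbatim repeat of the earlier argument.
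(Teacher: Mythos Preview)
Your proposal is correct and follows exactly the approach the paper intends: the paper's own proof consists of a single sentence saying ``The proof is similar to the proof of the previous theorem but with each $D$ having cardinality at most $\kappa$,'' and you have carried out precisely that adaptation, with the same refinement $N^{*}$ and the same appeal to Lemma~\ref{refinement-in-d-space}. If anything, your write-up is more careful than the paper's in checking that $D \setminus \{d\}$ is closed and that $D$ remains a kernel of $N^{*}$.
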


\begin{proof}
	The proof is similar to the proof of the previous theorem but with each $D$ having cardinality at most $\kappa$.
\end{proof}

Moreover, in a $\kappa$-exclusive space $(X, \tau)$ of Lindel\"of degree $L(X) = \kappa$, any closed discrete subset of $X$ has cardinality at most $\kappa$ and the proof of this fact is similar to the proof of proposition~\ref{closed-discrete-in-lindelof-is-countable}.

\begin{lemma}\label{extend-closed-discrete}
	Let $(X, \tau)$ be a $T_1$ topological space.
	If $F$ is a closed discrete subset of $X$ and $x$ is any point in $X \setminus F$, then $F \cup \{x\}$ is also closed and discrete.
\end{lemma}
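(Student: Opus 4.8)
The plan is to verify the two required properties, closedness and discreteness of $F \cup \{x\}$, separately, exploiting that in a $T_1$ space every singleton—and hence every finite set—is closed (this is essentially corollary~\ref{exclusive-t1} applied with $\kappa = 1$, or just the standard fact).

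For closedness, I would argue as follows. Since $X$ is $T_1$, the singleton $\{x\}$ is closed, so $F \cup \{x\}$ is a union of two closed sets and is therefore closed. (Alternatively one can mimic the argument in lemma~\ref{closed-discrete-subset}: given $y \in X \setminus (F \cup \{x\})$, use closedness of $F$ to get an open $V$ with $y \in V \subseteq X \setminus F$, then replace $V$ by $V \setminus \{x\}$, which is still open since $\{x\}$ is closed and still contains $y$ since $y \neq x$.)

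For discreteness, I would split into the points of $F$ and the new point $x$. If $f \in F$, discreteness of $F$ gives an open $U$ with $f \in U$ and $U \cap F = \{f\}$; since $x \notin F$ we have $x \neq f$, so $U \setminus \{x\}$ is an open neighborhood of $f$ (using $T_1$) meeting $F \cup \{x\}$ exactly in $\{f\}$. For the point $x$ itself, closedness of $F$ makes $X \setminus F$ an open neighborhood of $x$ with $(X \setminus F) \cap (F \cup \{x\}) = \{x\}$. Hence every point of $F \cup \{x\}$ has a neighborhood meeting $F \cup \{x\}$ only in that point, so $F \cup \{x\}$ is discrete.

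There is no real obstacle in this lemma; the only point requiring a little care is that excising the single point $x$ from an open set preserves openness, which is precisely where the $T_1$ hypothesis enters, and that the assumption $x \notin F$ is what guarantees this excision never removes the very point we are trying to isolate.
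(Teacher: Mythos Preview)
Your proof is correct and follows essentially the same approach as the paper: closedness via the union of two closed sets (using $T_1$ for the singleton), and discreteness by removing the extraneous point from the isolating neighborhood at each stage. The only cosmetic difference is that for isolating the new point $x$ you go straight to $X \setminus F$, whereas the paper first picks an arbitrary neighborhood of $x$ and then subtracts $F$; both arguments are identical in spirit.
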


\begin{proof}
	Assume that $F$ is a closed discrete subset of $X$, and let $x_0$ be any point in ${ X \setminus F }$.
	Since $\{x_0\}$ is closed, then the union $F \cup \{x_0\}$ is indeed closed too.
	To prove the discreteness of $F \cup \{x_0\}$, consider any open neighborhood $U_{x_0}$ of $x_0$ and observe that $U_{x_0} \setminus F$ is also an open neighborhood of $x_0$.
	Moreover, $U_{x_0} \cap (F \cup \{x_0\}) = \{x_0\}$.
	Now take any point $x \in F$, then since $F$ is discrete, we can find an open neighborhood $U_{x}$ of $x$ such that $U_{x} \cap F = \{x\}$.
	Thus, $U_{x}^{*} = U_{x} \setminus \{x_0\}$ is an open neighborhood of $x$ such that $U_{x}^{*} \cap (F \cup \{x_0\}) = \{x\}$ which ends the proof.
\end{proof}

Now, we will introduce fair spaces and then show that all fair spaces are D-space.
We will prove this by applying transfinite recursion to construct, for each open neighborhood assignment, a closed discrete kernel $D$.

\begin{definition}
	A topological space $(X, \tau)$ is said to be fair if it has the following property.
	For any collection $\mathcal{D}$ of nested closed discrete subsets of $X$, if $D = \bigcup \mathcal{D}$ is discrete, then $D$ is closed.
\end{definition}

\begin{theorem}\label{fair-space-is-d}
	Any $T_1$ fair space $(X, \tau)$ is a D-space.
\end{theorem}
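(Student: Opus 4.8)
The plan is to imitate the transfinite recursion used for GLS-spaces and semi-stratifiable spaces, invoking fairness precisely to control the limit stages. Fix an open neighborhood assignment $N : X \rightarrow \tau$. First I would build, by transfinite recursion, points $x_\alpha$ together with the sets $D_\xi = \{ x_\alpha : \alpha < \xi \}$ as follows: if $\bigcup_{\alpha < \xi} N(x_\alpha) = X$, stop and put $D = D_\xi$; otherwise choose any $x_\xi \in X \setminus \bigcup_{\alpha < \xi} N(x_\alpha)$. As in the discussion of the Lindel\"of problem in section~\ref{sec:a-discussion-on-two-open-problems}, this recursion must terminate at some ordinal $\gamma$, for otherwise the strictly increasing chain $\{ D_\xi \}$ would embed the class of all ordinals into $\mathcal{P}(X)$, contradicting the power set axiom. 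At termination $\bigcup N(D) = \bigcup_{\alpha < \gamma} N(x_\alpha) = X$, so $D$ is a kernel of $N$; it only remains to prove that every $D_\xi$ (in particular $D = D_\gamma$) is closed and discrete.

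I would prove this by induction on $\xi \le \gamma$. The set $D_0 = \phi$ is trivially closed discrete. For the successor step, note that $x_\xi \in X \setminus \bigcup_{\alpha < \xi} N(x_\alpha) \subseteq X \setminus D_\xi$, since $x_\alpha \in N(x_\alpha)$; hence $x_\xi \notin D_\xi$, and lemma~\ref{extend-closed-discrete} (using that $X$ is $T_1$) gives that $D_{\xi+1} = D_\xi \cup \{ x_\xi \}$ is closed and discrete. The limit step is where fairness is used: for a limit ordinal $\lambda \le \gamma$, the family $\{ D_\xi : \xi < \lambda \}$ is a nested family of closed discrete sets with union $D_\lambda$, so by fairness it suffices to show that $D_\lambda$ is discrete. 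Given $x_\alpha \in D_\lambda$ (so $\alpha < \lambda$ and $\alpha + 1 < \lambda$), the inductive hypothesis applied to $D_{\alpha+1}$ yields an open $U \ni x_\alpha$ with $U \cap D_{\alpha+1} = \{ x_\alpha \}$; replacing $U$ by $U \cap N(x_\alpha)$, still a neighborhood of $x_\alpha$ because $x_\alpha \in N(x_\alpha)$, one additionally removes every $x_\beta$ with $\alpha < \beta < \lambda$, since such $x_\beta \notin \bigcup_{\delta < \beta} N(x_\delta) \supseteq N(x_\alpha)$. Thus $(U \cap N(x_\alpha)) \cap D_\lambda = \{ x_\alpha \}$, so $D_\lambda$ is discrete and hence, by fairness, closed.

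Combining the base, successor, and limit cases, $D = D_\gamma$ is a closed discrete kernel of $N$, so $X$ is a D-space. I expect the main obstacle to be the limit step, namely verifying that $D_\lambda$ is discrete \emph{before} fairness can be applied; the key trick there is to intersect each witnessing neighborhood with $N(x_\alpha)$, which eliminates all of the later points $x_\beta$ of the construction at once. The remaining ingredients are the routine termination-by-cardinality argument and a single application of lemma~\ref{extend-closed-discrete} at successor stages.
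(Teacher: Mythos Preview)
Your proof is correct and in fact cleaner than the paper's. Both arguments run the same transfinite recursion $x_\xi \in X \setminus \bigcup_{\alpha<\xi} N(x_\alpha)$ and both invoke fairness at limit stages to upgrade ``discrete'' to ``closed discrete''; the difference is in how discreteness of $D_\lambda$ is witnessed. The paper carries along an auxiliary sequence of neighborhood refinements $N_\xi$ of $N$, maintaining the invariant that for each $d\in D_\xi$ the only set among $\{N_\xi(d'):d'\in D_\xi\}$ containing $d$ is $N_\xi(d)$; this invariant directly gives $N_\lambda(x_\alpha)\cap D_\lambda=\{x_\alpha\}$ at the limit stage, and the argument concludes via lemma~\ref{refinement-in-d-space}. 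You avoid the refinements entirely: the earlier points $x_\beta$ ($\beta<\alpha$) are excluded by a discreteness witness $U$ coming from the inductive hypothesis on $D_{\alpha+1}$, and the later points $x_\beta$ ($\beta>\alpha$) are excluded by $N(x_\alpha)$ itself via the defining property of the recursion. Your approach is more economical for this particular theorem; the paper's refinement bookkeeping is heavier but dovetails with its companion-map framework (cf.\ remark~\ref{main-theorem-d-space-corollary-remark}), which is presumably why it was chosen.
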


\begin{proof}
	Let $N : X \rightarrow \tau$ be an open neighborhood assignment.
	At each ordinal $\xi < \gamma$ for some ordinal $\gamma$, we will construct, by transfinite recursion, a set $D_\xi$, a neighborhood refinement $N_\xi$ of $N$, and if possible (if not, then set $x_\xi = \phi$), a point $x_\xi$ such that:
	\begin{enumerate}
		\item\label{itm:fair-space-is-d-1} The set $D_{\xi} = \{ x_{\alpha} : \alpha \leq \xi \} \setminus \{ \phi \}$ is a closed discrete subset of $X$ such that $D_\alpha \subsetneq D_\beta$ for all $\alpha$ and $\beta$ where $\alpha < \beta \leq \xi$.
		\item\label{itm:fair-space-is-d-2} For all $d_0 \in D$, the only open set among $\{N_{\xi}(d) : d \in D_\xi\}$ which contains $d_0$ is $N_{\xi}(d_0)$.
		\item\label{itm:fair-space-is-d-3} $N_{\alpha}(d) = N_{\beta}(d)$ for all $d \in D_{\alpha}$ where $\alpha < \beta \leq \xi$.
	\end{enumerate}

	For the initial step of the recursion, take any $x_0$ in $X$ and let $N_0 = N$, then together with the set $D_0 = \{ x_0 \}$ all three of the conditions are satisfied.

	For the successor step, assume that the above three conditions hold for an ordinal $\beta$.
	If $X \setminus \bigcup\limits_{\alpha \leq \beta} N_{\beta}(x_\alpha) = \phi$, then the recursion ends.
	If not, take any $x_{\beta^{+}}$ in $X \setminus \bigcup\limits_{\alpha \leq \beta} N_{\beta}(x_\alpha)$ and define $D_{\beta^{+}}$ as $D_{\beta} \cup \{ x_{\beta^{+}} \}$.
	Since $D_{\beta}$ is a closed discrete set, then from lemma~\ref{extend-closed-discrete}, $D_{\beta^{+}}$ is also closed and discrete.
	Therefore,~\ref{itm:fair-space-is-d-1} holds.
	Since $x_{\beta^{+}} \in X \setminus \bigcup\limits_{\alpha \leq \beta} N_{\beta}(x_\alpha)$, then $x_{\beta^{+}} \notin N_{\beta}(d)$ for all $d \in D_{\beta}$.
	Moreover, the set $N_{\beta}(x_{\beta^{+}}) \setminus D_{\beta}$ is open since $D_{\beta}$ is closed.
	Define the neighborhood refinement $N_{\beta^{+}}$ of $N$ as follows.
	Let $N_{\beta^{+}}(x) = N_{\beta}(x)$ for all $x$ in $X \setminus \{ x_{\beta^{+}} \}$ and $N_{ \beta^{+} }(x_{\beta^{+}}) = N_{\beta}(x_{\beta^{+}}) \setminus D_{\beta}$.
	Hence,~\ref{itm:fair-space-is-d-2} follows for $N_{\beta^{+}}$.
	Condition~\ref{itm:fair-space-is-d-3} is a direct consequence of the definition of $N_{\beta^{+}}$.

	Finally, for the limit step, let $\lambda$ be any limit ordinal and assume that the above three conditions holds for all $D_\xi$, $N_\xi$, and $x_\xi$ where $\xi < \lambda$.
	First, let $D_{\lambda}^{*} = \bigcup\limits_{\xi < \lambda} D_\xi$ then define $N_{\lambda}^{*}$ as follows.
	Let $N_{\lambda}^{*}(x_\alpha) = N_{\alpha}(x_\alpha)$ for all $x_\alpha \in D_{\lambda}^{*}$ and let $N_{\lambda}^{*}(x) = N(x)$ for all ${ x \notin D_{\lambda}^{*} }$.
	We want to show that for each $x_\alpha$ in $D_{\lambda}^{*}$, the only open set among ${ \{ N_{\lambda}^{*}(d) : d \in D_{\lambda}^{*}\} }$ containing $x_\alpha$ is $N_{\lambda}^{*}(x_\alpha) = N_{\alpha}(x_\alpha)$.
	To prove this, assume not, i.e.\ that for some $\alpha < \lambda$, $N_{\lambda}^{*}(x_\alpha)$ contains another point other than $x_\alpha$ in $D_{\lambda}^{*}$, say $x_\beta$ for some ${ \beta \neq \alpha }$.
	If $\beta < \alpha$, then $N_{\lambda}^{*}(x_\beta) = N_{\beta}(x_\beta) = N_{\alpha}(x_\beta)$ (from~\ref{itm:fair-space-is-d-3}) and since the only open set among ${ \{ N_{\alpha}(d) : d \in D_\alpha \} }$ which contains $x_\beta$ is $N_{\alpha}(x_\beta)$ (from~\ref{itm:fair-space-is-d-2}), then $x_\beta$ must equal $x_\alpha$ or else $N_{\alpha}(x_\alpha)$ is another open set in $\{ N_{\alpha}(d) : d \in D_\alpha \}$ which contains $x_\beta$, a contradiction to the assumption that $x_\beta \neq x_\alpha$.
	On the other hand, if $\alpha < \beta$, then ${ N_{\lambda}^{*}(x_\alpha) = N_{\alpha}(x_\alpha) = N_{\beta}(x_\alpha) }$ (from~\ref{itm:fair-space-is-d-3}) and since the only open set in ${ \{ N_{\beta}(d) : d \in D_\beta \} }$ which contains $x_\beta$ is $N_{\beta}(x_\beta)$ (from~\ref{itm:fair-space-is-d-2}) then $x_\beta$ must equal $x_\alpha$ or else $N_{\beta}(x_\alpha)$ is another open set in $\{ N_{\beta}(d) : d \in D_\beta \}$ which contains $x_\beta$, a contradiction to the assumption that $x_\beta \neq x_\alpha$.
	Therefore,~\ref{itm:fair-space-is-d-2} holds for $N_{\lambda}^{*}$.
	To show that $D_{\lambda}^{*}$ is discrete, take any $x_\alpha$ in $D_{\lambda}^{*}$, then $N_{\lambda}^{*}(x_\alpha)$ is the only open set among $\{ N_{\lambda}^{*}(d) : d \in D_{\lambda}^{*} \}$ which contains $x_\alpha$.
	Hence, $N_{\lambda}^{*}(x_\alpha) \cap D_{\lambda}^{*} = \{ x_\alpha \}$; and thus, $D_{\lambda}^{*}$ is discrete.
	Since $D_{\lambda}^{*}$ is discrete and is the union of a collection of nested sets each of which are closed and discrete, then from the fact that $X$ is fair, $D_{\lambda}^{*}$ is also closed which implies that~\ref{itm:fair-space-is-d-1} holds for $D_{*}$.
	Furthermore,~\ref{itm:fair-space-is-d-3} holds as well from the way we defined $N_{\lambda}^{*}$.
	If $X \setminus \bigcup\limits_{\alpha < \lambda} N_{\lambda}^{*}(x_\alpha) = \phi$, then the recursion ends so set $D_{\lambda} = D_{\lambda}^{*}$, $N_{\lambda} = N_{\lambda}^{*}$, and $x_\lambda = \phi$.
	If not, take any $x_\lambda$ in $X \setminus \bigcup\limits_{\alpha < \lambda} N_{\lambda}^{*}(x_\alpha)$;
	hence, by a similar argument as to the successor step, one can obtain a neighborhood refinement $N_\lambda$ from $N_{\lambda}^{*}$ such that $D_\lambda = D_{\lambda}^{*} \bigcup \{ x_\lambda \}$ together with $N_\lambda$ satisfy all the three conditions.

	This recursion must end at some ordinal $\gamma$, since $D_\alpha \subseteq X$ and each $D_\alpha$ is different for all $\alpha$.
	If it didn't end, then there exists an order isomorphism $\mathcal{C} : \text{\textbf{Ord}} \rightarrow \mathcal{D}$ where $\text{\textbf{Ord}}$ is the class of all ordinals and $\mathcal{D}$ is $\{ D_\alpha: \alpha \in \text{\textbf{Ord}} \}$ ordered by set inclusion $\subseteq$.
	This implies that $\mathcal{D}$ is a proper class.
	However, $\mathcal{D} \subseteq \mathcal{P}(X)$, and hence $\mathcal{P}(X)$ is a proper class too which contradicts the power set axiom.

	Moreover, $X \setminus \bigcup N_{\gamma}(D_\gamma) = \phi$.
	Therefore, $D_{\gamma}$ is a kernel of $N_{\gamma}$, and thus by lemma~\ref{refinement-in-d-space}, $X$ is a D-space which ends the proof.
\end{proof}

Another general guideline here for constructing counterexamples is that if some space is not a D-space, then it is not a fair space.
Note that the real line with the usual topology $(\mathbb{R}, \tau_{u})$ is not a fair space, that is since the collection $\mathcal{C} = \{ F_i : i \in \mathbb{N} \}$ where $F_i = \{ \frac{1}{j} : j \leq i \}$ consists of nested closed discrete sets and the union $\bigcup \mathcal{C}$ is discrete;
however, it is not closed.

\section{On paracompact and metacompact D-spaces}\label{sec:on-paracompact-and-metacompact-d-spaces}

The following theorem shows us how we can relate the property D and paracompactness together using companion maps.

\begin{theorem}\label{paracompact-d-space-sufficient}
	Let $(X, \tau)$  be a $T_1$ topological space.
If for any continuous map ${ f : (X, \tau) \rightarrow (\mathcal{P}(X), \tau_{\mathcal{S}(X)}) }$ such that $f(x) \in \mathcal{U}(x)$ for all $x$ in $X$, there exists a continuous neighborhood refinement map $f_{*}$ of $f$ and a set $D \subseteq X$ for which the continuous map $g:(X, \tau) \rightarrow (\mathcal{P}(D), \tau_{\mathcal{S}(D)})$ defined as $g(x) = f_{*}(x) \cap D$ satisfies
	\begin{enumerate}
		\item $g^{-1}(\{ \phi \}) = \phi$ \label{paracompact-d-space-sufficient-1}
		\item $\lvert \bigcup g(N^{*}(x)) \rvert < \omega_0$ for all $x$ in $X$ \label{paracompact-d-space-sufficient-2}
	\end{enumerate}
	where $N^{*}$ is the neighborhood assignment associated with the companion map $f_{*}$, then $X$ is a paracompact D-space.
\end{theorem}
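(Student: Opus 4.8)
The plan is to translate the hypothesis, which is phrased via companion maps, back into the language of open neighborhood assignments, and then to read off both halves of the conclusion from results already established. By proposition~\ref{neighborhood-assignment-to-continuous-map}, continuous maps $f$ with $f(x)\in\mathcal{U}_{X}(x)$ for all $x$ correspond bijectively to open neighborhood assignments $N$ via $N(x)=f^{-1}(\mathcal{U}_{X}(x))$; similarly $f_{*}$ corresponds to $N^{*}$, and the requirement that $f_{*}$ be a neighborhood refinement map of $f$ means $N^{*}$ is a neighborhood refinement of $N$, i.e.\ $x\in N^{*}(x)\subseteq N(x)$ for all $x$. Now condition~\ref{paracompact-d-space-sufficient-1}, together with corollary~\ref{subcover-map}, says exactly that $N^{*}\restriction_{D}$ is an open covering assignment, i.e.\ $\bigcup_{d\in D}N^{*}(d)=X$, so $D$ is a kernel of $N^{*}$. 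Moreover, by proposition~\ref{subfamily-map} the map $g$ is the companion map of the open covering assignment $N^{*}\restriction_{D}$, whose induced open cover is $\{N^{*}(d):d\in D\}$; hence condition~\ref{paracompact-d-space-sufficient-2} is precisely the hypothesis of lemma~\ref{locally-finite-companion-maps} for that cover, with the auxiliary open neighborhood assignment taken to be $N^{*}$ itself, and therefore $\{N^{*}(d):d\in D\}$ is locally finite. So the hypothesis amounts to: every open neighborhood assignment $N$ admits a neighborhood refinement $N^{*}$ and a set $D\subseteq X$ such that $\{N^{*}(d):d\in D\}$ is a locally finite open cover of $X$ with $N^{*}(d)\subseteq N(d)$ for all $d\in D$.

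Paracompactness then follows at once: given an arbitrary open cover of $X$, choose for each $x$ a member containing $x$ and let $N(x)$ be that member; applying the reformulated hypothesis to $N$ produces the locally finite open cover $\{N^{*}(d):d\in D\}$ with $N^{*}(d)\subseteq N(d)$, which is a locally finite open refinement of the given cover. Alternatively, one may phrase this through theorem~\ref{paracompactness-by-companion-maps}: starting from an open covering assignment $C:A\rightarrow\tau$, set $N(x)=C(a_{x})$ where $x\in C(a_{x})$, let $C_{r}:D\rightarrow\tau$ be $C_{r}(d)=N^{*}(d)$, and observe that $C_{r}$ refines $C$ and its companion map $g$ meets the local-finiteness clause of that theorem with witness $N^{*}$.

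For the D-space conclusion, fix an open neighborhood assignment $N$ and obtain $N^{*}$ and $D$ as above; by lemma~\ref{refinement-in-d-space} it is enough to show that $D$ is a closed discrete kernel of $N^{*}$. The kernel property is already in hand ($\bigcup_{d\in D}N^{*}(d)=X$), so the remaining point is that $D$ is closed and discrete, and this is where $T_{1}$ enters. For discreteness, given $d_{0}\in D$, local finiteness supplies an open neighborhood $V$ of $d_{0}$ meeting $N^{*}(d)$ only for $d$ in a finite set $S\subseteq D$; since any $d\in D\cap V$ satisfies $d\in V\cap N^{*}(d)\neq\phi$, we get $D\cap V\subseteq S$, and then $V\setminus(S\setminus\{d_{0}\})$ is an open neighborhood of $d_{0}$ (finite sets being closed in a $T_{1}$ space) meeting $D$ exactly in $d_{0}$. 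Closedness is the same computation from a point $x\in X\setminus D$: a neighborhood $W$ of $x$ meets only finitely many members of the family, $D\cap W$ is a finite set not containing $x$, and $W\setminus(D\cap W)$ is a neighborhood of $x$ disjoint from $D$. Since $\bigcup_{d\in D}N(d)\supseteq\bigcup_{d\in D}N^{*}(d)=X$, the set $D$ is a closed discrete kernel of $N$, so $X$ is a D-space by lemma~\ref{refinement-in-d-space}.

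I expect no serious obstacle; the substance is the translation step in the first paragraph, and the only care needed is bookkeeping — confirming via proposition~\ref{subfamily-map} that $g$ is genuinely the companion map of $N^{*}\restriction_{D}$, and checking that the stronger condition~\ref{paracompact-d-space-sufficient-2}, which names $N^{*}$ rather than an unrelated assignment as the local-finiteness witness, still fits the format required by lemma~\ref{locally-finite-companion-maps}. Once these identifications are in place, both paracompactness and the D-property drop out of lemma~\ref{locally-finite-companion-maps}, theorem~\ref{paracompactness-by-companion-maps} (or a direct refinement argument), and lemma~\ref{refinement-in-d-space}.
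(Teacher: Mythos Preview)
Your proof is correct, and for the paracompactness half it matches the paper's argument essentially line for line. For the D-space half, however, you take a genuinely different route. The paper does not show directly that $D$ is closed discrete; instead it observes that $g(d)$ is finite for each $d\in D$ (since $g(d)\subseteq\bigcup g(N^{*}(d))$), uses $T_{1}$ to conclude $g(d)$ is closed, and then builds a further neighborhood refinement $N^{**}$ of $N^{*}$ by deleting $g(d)\setminus\{d\}$ from $N^{*}(d)$, so as to force $g^{*}(d)=\{d\}$ and invoke corollary~\ref{main-theorem-d-space-corollary}. Your argument bypasses this extra refinement entirely: you read off from condition~\ref{paracompact-d-space-sufficient-2} that $\{N^{*}(d):d\in D\}$ is locally finite (indexed by $D$), and then run the classical ``locally finite family with a point in each member yields a closed discrete set in $T_{1}$'' argument --- exactly the mechanism sketched in section~\ref{sec:a-discussion-on-two-open-problems} when discussing Gruenhage's approach. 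This is shorter and more transparent, and it avoids the companion-map bookkeeping the paper's route requires; the paper's approach, on the other hand, keeps the whole proof inside the companion-map formalism and exercises corollary~\ref{main-theorem-d-space-corollary}, which is thematically consistent with the chapter's goal of characterizing D-spaces via those maps.
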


\begin{proof}
	Assume the hypothesis of the theorem.
	Let $f : (X, \tau) \rightarrow (\mathcal{P}(X), \tau_{\mathcal{S}(X)})$ be a continuous map such that $f(x) \in \mathcal{U}(x)$ for all $x$ in $X$, then there exists a continuous neighborhood refinement map $f_*$ of $f$ and a set $D \subseteq X$ for which the continuous map ${ g:(X, \tau) \rightarrow (\mathcal{P}(D), \tau_{\mathcal{S}(D)}) }$ defined as $g(x) = f_{*}(x) \cap D$ satisfies both~\ref{paracompact-d-space-sufficient-1} and~\ref{paracompact-d-space-sufficient-2}.
	Since $g(d) \subseteq \bigcup g(N^{*}(d))$, then $\lvert g(d) \rvert \leq \lvert \bigcup g(N^{*}(d)) \rvert < \omega_0$, i.e.\ $g(d) = \{ a \in D : d \in N^{*}(a) \}$ is finite for all $d$ in $D$ which implies that $g(d)$ is closed since $X$ is $T_1$.
	Define the neighborhood refinement $N^{**}$ of $N^{*}$ as $N^{**}(d) = N^{*}(d) \setminus (g(d) \setminus \{d\})$ for all $d$ in $D$ and $N^{**}(x) = N^{*}(x)$ for all $x$ in $X \setminus D$.
	Observe that the companion map $f_{**}$ of $N^{**}$, the set $D$, and the map $g^{*}:(X, \tau) \rightarrow (\mathcal{P}(D), \tau_{\mathcal{S}(D)})$ defined as $g^{*}(x) = f_{**}(x) \cap D$ all satisfy the statement of corollary~\ref{main-theorem-d-space-corollary}.
	Thus, $X$ is a D-space.
	To show that $X$ is paracompact, let $\mathcal{O} = \{ U_a : a \in A \}$ be any open cover of $X$.
	Since for each $x$ in $X$, there is an $a_x \in A$ such that $x \in U_{a_x}$, then define the assignment $N : X \rightarrow \tau$ as $N(x) = U_{a_x}$.
	Clearly, $N$ is an open neighborhood assignment, and hence has a companion map $f$ that satisfies $f(x) \in \mathcal{U}(x)$ for all $x \in X$.
	Therefore, there is a continuous neighborhood refinement map $f_{*}$ of $f$ and a set $D \subseteq X$ such that the continuous map $g$ defined in the hypothesis satisfies both~\ref{paracompact-d-space-sufficient-1} and~\ref{paracompact-d-space-sufficient-2}.
	Hence, by corollary~\ref{subcover-map}, the set $\mathcal{O}_r = \{ N^{*}(d) : d \in D \}$ is an open refinement of $\mathcal{O}$ and the map $g$ is the companion map of $N^{*}\restriction_{D}$ which is the open covering assignment inducing the open cover $\mathcal{O}_r$.
	It is obvious that $N^{*}$ and $g$ both satisfy the necessary condition of lemma~\ref{locally-finite-companion-maps}, and as a consequence, $\mathcal{O}_r$ is locally finite.
\end{proof}

Similarly, the following theorem shows how we can relate the property D and metacompactness using companion maps.

\begin{theorem}\label{metacompact-d-space-sufficient}
	Let $(X, \tau)$  be a $T_1$ topological space.
	If for any continuous map ${ f : (X, \tau) \rightarrow (\mathcal{P}(X), \tau_{\mathcal{S}(X)}) }$ such that $f(x) \in \mathcal{U}(x)$ for all $x$ in $X$, there exists a continuous neighborhood refinement map $f_{*}$ of $f$ and a set $D \subseteq X$ for which the continuous map $g:(X, \tau) \rightarrow (\mathcal{P}(D), \tau_{\mathcal{S}(D)})$ defined as $g(x) = f_{*}(x) \cap D$ satisfies
	\begin{enumerate}
		\item $g^{-1}(\{ \phi \}) = \phi$ \label{metacompact-d-space-sufficient-1}
		\item $\lvert g(x) \rvert < \omega_0$ for all $x$ in $X$ \label{metacompact-d-space-sufficient-2}
	\end{enumerate}
	where $N^{*}$ is the neighborhood assignment associated with the companion map $f_{*}$, then $X$ is a metacompact D-space.
\end{theorem}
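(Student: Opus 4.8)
The plan is to follow the proof of Theorem~\ref{paracompact-d-space-sufficient} essentially verbatim, replacing ``locally finite'' by ``point finite'' and Lemma~\ref{locally-finite-companion-maps} by Theorem~\ref{metacompactness-by-companion-maps}. Thus the argument will split into two halves: that $X$ is a D-space and that $X$ is metacompact. In fact the D-space half becomes a little shorter here, since condition~\ref{metacompact-d-space-sufficient-2}, namely $\lvert g(x)\rvert<\omega_0$ for all $x\in X$, already yields $\lvert g(d)\rvert<\omega_0$ for every $d\in D$ without any detour through a local-finiteness estimate.

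For the D-space half I would argue as follows. Given a continuous $f:(X,\tau)\rightarrow(\mathcal{P}(X),\tau_{\mathcal{S}(X)})$ with $f(x)\in\mathcal{U}(x)$ for all $x$, the hypothesis supplies a continuous neighborhood refinement map $f_{*}$ of $f$, a set $D\subseteq X$, and the continuous map $g(x)=f_{*}(x)\cap D$ satisfying $g^{-1}(\{\phi\})=\phi$ and $\lvert g(x)\rvert<\omega_0$ for all $x$; put $N^{*}(x)=f_{*}^{-1}(\mathcal{U}_{X}(x))$. Since $g(d)=\{\,a\in D:d\in N^{*}(a)\,\}$ is finite and $X$ is $T_1$, it is closed, so I would define the neighborhood refinement $N^{**}$ of $N^{*}$ by $N^{**}(d)=N^{*}(d)\setminus(g(d)\setminus\{d\})$ for $d\in D$ and $N^{**}(x)=N^{*}(x)$ for $x\in X\setminus D$, exactly as in Theorem~\ref{paracompact-d-space-sufficient}. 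Then I would check that the companion map $f_{**}$ of $N^{**}$, the set $D$, and $g^{*}(x)=f_{**}(x)\cap D$ satisfy the hypotheses of Corollary~\ref{main-theorem-d-space-corollary} (equivalently, that $D$ and $N^{**}$ satisfy Lemma~\ref{closed-discrete-kernel}), from which $X$ is a D-space.

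For metacompactness I would take an arbitrary open cover $\mathcal{O}=\{\,U_a:a\in A\,\}$, pick for each $x$ an index $a_x$ with $x\in U_{a_x}$, and set $N(x)=U_{a_x}$; its companion map $f$ satisfies $f(x)\in\mathcal{U}(x)$, so the hypothesis yields $f_{*}$, $D$, $g$ as above. Since $f_{*}$ is a neighborhood refinement map of $f$ we have $N^{*}(d)\subseteq N(d)=U_{a_d}$, so $\mathcal{O}_r=\{\,N^{*}(d):d\in D\,\}$ is an open refinement of $\mathcal{O}$, and $g^{-1}(\{\phi\})=\phi$ forces $\bigcup N^{*}(D)=X$ by Corollary~\ref{subcover-map}. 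By Proposition~\ref{subfamily-map} the companion map of the open covering assignment $N^{*}\restriction_{D}$ is exactly $g$, which satisfies $\lvert g(x)\rvert<\omega_0$; hence $\mathcal{O}_r$ is point finite — either by Theorem~\ref{metacompactness-by-companion-maps}, or directly because $g(x)=\{\,d\in D:x\in N^{*}(d)\,\}$ being finite says $x$ lies in only finitely many members of $\mathcal{O}_r$. As $\mathcal{O}$ was arbitrary, $X$ is metacompact.

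The hard part will be the D-space half, concretely the verification that $N^{**}$ really presents $D$ as a closed discrete kernel, i.e.\ that $g^{*}(d)=\{d\}$ (equivalently $N^{**}(d)\cap D=\{d\}$) for every $d\in D$. This is the same bookkeeping as in Theorem~\ref{paracompact-d-space-sufficient}, but it is genuinely more delicate under point-finiteness than under local finiteness: point-finiteness of $\{\,N^{*}(d):d\in D\,\}$ does not bound $\lvert N^{*}(d)\cap D\rvert$, so one cannot afford to delete all the other points of $D$ from a single $N^{*}(d)$. Should the deletion $N^{*}(d)\setminus(g(d)\setminus\{d\})$ turn out not to suffice, the fallback I would use is to first replace $\{\,N^{*}(d):d\in D\,\}$ by an \textit{irreducible} subcover $\{\,N^{*}(d):d\in D_0\,\}$ (which exists because the cover is point finite) and then use $\lvert g(d)\rvert<\omega_0$ to keep the relevant deletions finite while forcing the selected set to be closed and discrete. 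The remaining verifications — continuity of the maps into the puf topologies (Propositions~\ref{refinement-map-puf} and~\ref{subfamily-map-puf}), openness of $N^{**}(d)$ after a finite deletion, and the refinement and covering relations — are routine consequences of the results of Chapter~\ref{ch:a-new-approach}.
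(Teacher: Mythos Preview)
Your proposal tracks the paper's proof essentially verbatim --- same $N^{**}$ construction and appeal to Corollary~\ref{main-theorem-d-space-corollary} for the D-space half, same refinement $\mathcal{O}_r=\{N^{*}(d):d\in D\}$ and appeal to Theorem~\ref{metacompactness-by-companion-maps} for metacompactness --- and at the step you flag the paper, like you, simply writes ``Observe that \ldots\ satisfy the statement of corollary~\ref{main-theorem-d-space-corollary}'' without further verification.

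Your caution in the final paragraph is well placed and in fact pinpoints a genuine wrinkle shared with the paper's own argument (here and in Theorem~\ref{paracompact-d-space-sufficient}). To invoke Corollary~\ref{main-theorem-d-space-corollary} one needs $g^{*}(d)=\{a\in D:d\in N^{**}(a)\}=\{d\}$, i.e.\ $d\notin N^{**}(a)$ for every $a\in D\setminus\{d\}$. But $N^{**}(a)=N^{*}(a)\setminus(g(a)\setminus\{a\})$ removes from $N^{*}(a)$ only those $d'\in D$ with $a\in N^{*}(d')$; hence if $d\in N^{*}(a)$ while $a\notin N^{*}(d)$, the point $d$ survives in $N^{**}(a)$ and $g^{*}(d)\neq\{d\}$. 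The set one would actually need to excise from $N^{*}(a)$ is $(N^{*}(a)\cap D)\setminus\{a\}$, and you are right that condition~\ref{metacompact-d-space-sufficient-2} (point-finiteness of $\{N^{*}(d):d\in D\}$) does not bound its size. So your main line is a faithful reproduction of the paper's proof, gap and all; your irreducible-subcover fallback is a reasonable direction, though irreducibility alone still does not force the selected index set to be closed discrete, so completing the D-space half rigorously would require an idea beyond what either you or the paper supplies.
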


\begin{proof}
	Assume that the hypothesis of the theorem holds.
	Let $f : (X, \tau) \rightarrow (\mathcal{P}(X), \tau_{\mathcal{S}(X)})$ be a continuous map such that $f(x) \in \mathcal{U}(x)$ for all $x$ in $X$, then there exists a continuous neighborhood refinement map $f_*$ of $f$ and a set $D \subseteq X$ for which the continuous map $g:(X, \tau) \rightarrow (\mathcal{P}(D), \tau_{\mathcal{S}(D)})$ defined as $g(x) = f_{*}(x) \cap D$ satisfies both~\ref{metacompact-d-space-sufficient-1} and~\ref{metacompact-d-space-sufficient-2}.
	Since $g(d) = \{ a \in D : d \in N^{*}(a) \}$ is finite for all $d$ in $D$, then it is clear that $g(d)$ is closed.
	Define the neighborhood refinement $N^{**}$ of $N^{*}$ as $N^{**}(d) = N^{*}(d) \setminus (g(d) \setminus \{d\})$ for all $d$ in $D$ and $N^{**}(x) = N^{*}(x)$ for all $x$ in $X \setminus D$.
	Observe that the companion map $f_{**}$ of $N^{**}$, the set $D$, and the map $g^{*}:(X, \tau) \rightarrow (\mathcal{P}(D), \tau_{\mathcal{S}(D)})$ defined as $g^{*}(x) = f_{**}(x) \cap D$ all satisfy the statement of corollary~\ref{main-theorem-d-space-corollary}.
	Thus, $X$ is a D-space.

	To show that $X$ is metacompact, let $C : A \rightarrow \tau$ be any open covering assignment, then $C(A) = \{ U_a : a \in A \}$ is an open cover.
	Since for each $x$ in $X$ there is an $a_x \in A$ such that $x \in U_{a_x}$, then define the assignment $N : X \rightarrow \tau$ as $N(x) = U_{a_x}$.
	Clearly, $N$ is an open neighborhood assignment, and hence has a companion map $f$ that satisfies $f(x) \in \mathcal{U}(x)$ for all $x \in X$.
	Therefore, there is a continuous neighborhood refinement map $f_{*}$ of $f$ and a set $D \subseteq X$ such that the continuous map $g$ defined in the hypothesis satisfies both~\ref{metacompact-d-space-sufficient-1} and~\ref{metacompact-d-space-sufficient-2}.
	Hence, by corollary~\ref{subcover-map}, the set $\mathcal{O}_r = \{ N^{*}(d) : d \in D \}$ is an open refinement of $C(A)$ and the map $g$ is the companion map of $N^{*}\restriction_{D}$ which is the open covering assignment.
	Clearly, for all $d \in D$, $N^{*}\restriction_{D}(d) \subseteq N(d) \subseteq U_{a_d} = C(d)$.
	Finally, as a direct application of theorem~\ref{metacompactness-by-companion-maps}, $X$ is metacompact.
\end{proof}

\clearpage
\chapter{Final Thoughts}\label{ch:final-thoughts}
\section{Comments}\label{sec:comments}

The central idea of this thesis was to find a new way to talk about covering properties that may bring new insights to the problems related to D-spaces.
Being inspired by question~\ref{arhangelskii-question} posed by Arhangel'skii, studying continuous maps of D-spaces could eventually lead to the solution of the problem whether every (regular) Lindel\"of space is a D-space.

As an attempt, we tried to obtain an equivalent definition of the property D that is more oriented towards mappings.
To accomplish this, we thought of a way to hide set-related notions;
e.g.\ ``closed discrete'', and making such properties follow from the properties of mappings instead.

The role of the puf topology was to encode the information about the indices of all the open sets that contained any given point in a continuous mapping.
This was the companion map which we've studied in depth.
Category theory was a language we used to write some structures in, as it was very tempting to use.
However, it was used as a conceptual tool and not as a problem-solving one.

After we found a way to define D-spaces using companion maps, we tried to test the effectiveness of the approach by applying it to cases where the property D was connected to other covering properties such as the Lindel\"of property, paracompactness, and metacompactness.
We were able to prove some statements;
however, we wish that we could arrive at stronger results.
We also proved other results which were independent of our approach to give some insights on how one should go about constructing counterexamples.

It is expected that natural questions could arise about the puf topology;
for example, what topological properties does it have?
Obviously, it doesn't have a good separation property since it doesn't have many open sets.
Moreover, its properties don't vary much from one set to another, especially if the sets are infinite, since the puf topology is related only to the set $X$ on which its induced and not to any topology defined on $X$.

Although the properties of the puf topology are not that interesting, it was important because of the role it played in defining the companion maps and since it could be constructed from any given set with no constraints.

\section{Future Research}\label{sec:future-research}

We ask the following questions that could motivate future research in the same path we took.

\begin{question}\label{better-than-puf}
	Could we introduce a topology that has better properties than the puf topology which may or may not depend on the topology on $X$ and that refines our approach?
\end{question}

We discussed the role and limitations of the puf topology in the previous section.
We would like to see whether there are ``nicer'' topological spaces that could render our approach using companion maps more effective.
For instance, we could look for a topological space that could be used to encode more information about the original topological space defined on $X$ such as its separation property.
This also brings us to the following question.

\begin{question}\label{better-than-puf-2}
	Instead of looking for a new topology, could we start from the puf topology and then apply a sequence of refinements to the base generated by the subbase $\mathcal{S}(X)$ to obtain a better topology?
\end{question}

Question~\ref{better-than-puf-2} could be more reasonable to ask, since there is a point to start searching from, then depending on the properties of the given topology on $X$, we can apply adjustments to the puf topology on $X$.

\begin{question}\label{more-constructs}
	What other results could we obtain by applying this approach to different constructs on D-spaces (or more generally on spaces with a covering property)?
\end{question}

We would like to see what our approach could provide when talking about different constructs on D-spaces such as unions and products of D-spaces.
Especially after our approach is refined with a better topology than the puf topology.

\begin{question}
	What other interesting constructions in \textbf{Top} could we obtain from this approach (with or without refining the puf topology)?
\end{question}

There is still more to look at concerning constructions and universal mapping properties in the category \textbf{Top} which are related to our approach.
We would like to see the extent to which studying such constructions could be helpful.

\begin{question}
	Could our approach make some proofs of existing results on D-spaces shorter and clearer?
\end{question}

It would be great to investigate how our approach could be useful in refining existing proofs and approaches to problems on D-spaces and covering properties in general.

Finally, what we wish is to solve the open problems we discussed either in the current setting or in a different one.
It could be that the current axioms of set theory are not sufficient to solve these problems.
Also, if we were unfortunate and due to Kurt G\"odel's incompleteness theorems in mathematical logic (see Smullyan, 1992)$\,^{\text{\cite{smullyan}}}$, statements like ``every regular Lindel\"of space is a D-space'' and ``every regular paracompact space is a D-space'' could be undecidable statements, i.e.\ statements being neither provable nor refutable in a specified deductive system unless they are taken as axioms which, in this case, become trivial.
Nonetheless, it could still be beneficial to develop this theory and try to apply it in different contexts.

\cleardoublepage
\addcontentsline{toc}{chapter}{References}

\appendix
\chapter*{Appendices}\label{ch:appendices}
\addcontentsline{toc}{chapter}{Appendices}
\renewcommand{\thesection}{A.\arabic{section}}
\section{Category theory and the category \textbf{Top}}\label{sec:category-theory-and-the-categorytextbf}

This is a concise introduction to category theory, and in particular to the category \textbf{Top} which we have worked with in this thesis.
According to Awodey (2010)$\,^{\text{\cite{awodey}}}$, one can think of category theory as ``the mathematical study of (abstract) algebras of functions''.
Another way to look at category theory, according to Leinster (2014)$\,^{\text{\cite{awodey}}}$, is that ``Category theory takes a bird's eye view of mathematics.
From high in the sky, details become invisible, but we can spot patterns that were impossible to detect from ground level''.
Yet another way to view category theory is that, instead of looking at a particular ``object'' and its internal properties such as a topological space, we study the relations that hold (external to the object) between that object and all the other objects in some universe, e.g.\ all topological spaces.
Such ``universal'' relations are called \textit{universal properties} and they are central to category theory.
To accomplish this, category theory heavily relies on a generalization of the concept of a structure-preserving mapping/transformation.

The history of category theory could be traced back to the \textit{Felix Klein's Erlanger Program} (Marquis, 2009)$\,^{\text{\cite{marquis}}}$.
The fundamental idea in Klein's program was to characterize geometries by their groups of transformations.
Thus, a geometrical property is a property that is invariant under these transformations.
In other words, these transformations encoded properties of the geometry.
Therefore, two geometries are the same if there is some bijection between the underlying spaces and an isomorphism between their transformation groups.

As a continuation of the Erlanger Program, Samuel Eilenberg and Saunders Mac Lane sought to develop a language to characterize different types of mathematical structures by their ``admissible transformations''.
The first work on category theory was the ``General theory of natural equivalences'' (Mac Lane S. and Eilenberg S., 1945)$\,^{\text{\cite{maclane-eilenberg}}}$.
They saw their work as a generalization and an analogy to Klein's program.
The motivation behind their work was to formalize the intuitive notion of ``natural equivalences'', i.e.\ equivalences that didn't depend on arbitrary choices.
For example, there is a natural equivalence between a vector space $V$ and its double dual $V^{**}$ in the sense that the equivalence doesn't depend on the arbitrary choice of the basis.
Since equivalence between vector spaces is demonstrated through linear maps (which is the structure-preserving map in the class of all vector spaces), they needed to introduce a generalization to structure-preserving maps.
They started by defining the concept of a category (an aggregate of objects and mappings between them) which we will define, the concept of a \textit{functor} which is a structure-preserving map between categories, and the concept of a \textit{natural transformation} which is a transformation from one functor to another while respecting the internal structure of the categories involved.
Hence, we can demonstrate the natural equivalence between a vector space and its double dual by showing that there is a natural isomorphism between the identity functor on the category of vector spaces and the functor that takes each vector to its double dual.

Category theory is applied to several mathematical disciplines, such as algebraic geometry, algebraic topology, homological algebra, and even to logic.
Some see Category theory as a unifying language for mathematics that is more natural than set theory because of its conceptual characteristics.
As to the foundations of category theory, some base category theory on a set theory such as ZFC or NBG using the notion of ``conglomerates'', and other axiomatize category theory without set theory such as in ETCC (the elementary theory of the category of categories).
For more on the foundations of category theory the reader can consult (Ad\'amek et al. 2004)$\,^{\text{\cite{adamek-herrlich-strecker}}}$.

There are several definitions of the notion of a category;
however, we will use a common definition as in Awodey (2010)$\,^{\text{\cite{awodey}}}$.


\begin{definition}
	A category consists of the following
	\begin{enumerate}
		\item Objects: $A$, $B$, \dots
		\item Arrows: $f$, $g$, \dots
		\item For each arrow $f$, there are given objects $dom(f)$ and $cod(f)$ called the domain and codomain of $f$.
		We write $f: A \rightarrow B$ to indicate that $A = dom(f)$ and $B = cod(f)$.
		\item For each arrows $f: A \rightarrow B$, $g: B \rightarrow C$, i.e.\ with $cod(f) = dom(g)$, there is an arrow $g \circ f: A \rightarrow C$ called the composite of $f$ and $g$.
		\item For each object $A$, there is an arrow $1_A : A \rightarrow A$ called the identity arrow of $A$.
	\end{enumerate}
	Such that the above satisfy
	\begin{enumerate}
		\item Associativity: $h \circ (g \circ f) = (h \circ g) \circ f$ for all arrows $f : A \rightarrow B$, $g : B \rightarrow C$, and $h : C \rightarrow D$.
		\item Unit: $f \circ 1_A  = f = 1_B \circ f$ for all arrows $f : A \rightarrow B$.
	\end{enumerate}
\end{definition}

As an important note here, arrows which are also called morphisms need not be functions.
Although arrows may be functions, there are cases where a category has arrows as something other than functions such as set relations or something else completely unrelated to sets which satisfies the properties above.
In other words, arrows/morphisms are generalizations of set functions.
Furthermore, the objects themselves need not be sets.
This makes category theory a strong tool for making abstractions.

An example of a category is the category \textbf{Top} where objects are topological spaces and arrows are continuous mappings between these spaces.
Categories with objects as sets with some structure and morphisms as structure-preserving mappings such as \textbf{Top} are usually called categories of structured sets.
More generally, categories with objects as sets are usually called concrete categories.

Another important note is that arrows only matter in category theory.
There is an equivalent definition for categories that only consists of arrows, i.e.\ without objects;
however, it is much easier to deal with the definition above.

To introduce what is meant by a commutative diagram, we show an example of a diagram below.
\begin{center}
	\begin{tikzcd}
		A \arrow[rd, "z"'] \arrow[r, "f"] & B \arrow[d, "g"] \\
												 & C
	\end{tikzcd}
\end{center}

If it is not necessary that $z = g \circ f$, then the diagram doesn't commute.
Otherwise, if $z = g \circ f$, then the diagram is commutative, and in particular becomes a commutative triangle.
A commutative diagram is a diagram for which any two different sequences of compositions from one node to another are equal.
In our case, we have $z$ alone, and then the composition of $g$ and $f$, i.e.\ $g \circ f$ as two different sequences of compositions from $A$ to $C$.
These diagrams makes it easier to understand constructions in category theory.
Also, a common technique of proofs in category theory is called diagram chasing in which one proves that two arrows are equal using the fact that the diagram is commutative and obtaining different sequences of compositions from a node to another.

Similar to continuous maps of topological spaces, linear maps of vector spaces, and homomorphisms of groups, there is also a ``structure-preserving map'' between categories which is the functor.

\begin{definition}
	For any two categories $\mathcal{C}$ and $\mathcal{D}$, a functor from the category $\mathcal{C}$ to the category $\mathcal{D}$ is a mapping of objects to objects and arrows to arrows such that:
	\begin{enumerate}
		\item $F(f_\mathcal{C} : A_{\mathcal{C}} \rightarrow B_{\mathcal{C}}) = F(f_\mathcal{C}) : F(A_{\mathcal{C}}) \rightarrow F(B_\mathcal{C}) = f_\mathcal{D} : A_{\mathcal{D}} \rightarrow B_{\mathcal{D}}$
		\item $F(1_{A_{\mathcal{C}}}) = 1_{F(A_{\mathcal{C}})} = 1_{A_{\mathcal{D}}}$
		\item $F(g_{\mathcal{C}} \circ f_{\mathcal{C}}) = F(g_{\mathcal{C}}) \circ F(f_{\mathcal{C}}) = g_{\mathcal{D}} \circ f_{\mathcal{D}}$
	\end{enumerate}
\end{definition}


The commutative diagrams below illustrates the definition above.

\begin{center}
	\begin{tikzcd}[transform shape, nodes={scale=1.1}]
		\mathcal{C} \arrow[rr, "F"] & & \mathcal{D}
	\end{tikzcd}

	\vspace{1cm}

	\begin{tikzcd}[transform shape, nodes={scale=1.1}]
		A_{\mathcal{C}} \arrow[loop left, "1_{A_{\mathcal{C}}}"] \arrow[r, "f_{\mathcal{C}}"] \arrow[rd, "g_{\mathcal{C}} \circ f_{\mathcal{C}}"'] & B_{\mathcal{C}} \arrow[loop right, "1_{B_{\mathcal{C}}}"] \arrow[d, "g_{\mathcal{C}}"] & & &  &
		A_{\mathcal{D}} \arrow[loop left, "1_{A_{\mathcal{D}}}"] \arrow[r, "f_{\mathcal{D}}"] \arrow[rd, "g_{\mathcal{D}} \circ f_{\mathcal{D}}"'] & B_{\mathcal{D}} \arrow[loop right, "1_{B_{\mathcal{D}}}"] \arrow[d, "g_{\mathcal{D}}"] \\
		& C_{\mathcal{C}} \arrow[loop below, "1_{C_{\mathcal{C}}}"] &  &  & & & C_{\mathcal{D}} \arrow[loop below, "1_{C_{\mathcal{D}}}"]
	\end{tikzcd}
\end{center}

Every category $\mathcal{C}$ has the identity functor $1_{\mathcal{C}}$ which takes each object to itself and each arrow to itself.
In general, any functor from a category to itself is called an endofunctor.
Another example of a functor is the forgetful functor $U$ from \textbf{Top} to the category \textbf{Set} (which consists of sets as objects and set functions as arrows) that takes each topological space $(X, \tau)$ to its underlying set $X$, forgetting its structure.

A category theoretical notion that captures equivalence in the usual sense such as bijections between sets and homeomorphisms between topological spaces is the isomorphism.

\begin{definition}
	In a category $\mathcal{C}$, an arrow $f : A \rightarrow B$ is called an isomorphism if there exists an arrow $g : B \rightarrow A$ such that $g \circ f = 1_A$ and $f \circ g = 1_B$.
\end{definition}

Obviously, in the category \textbf{Top}, isomorphisms are just the homeomorphisms between topological spaces.

There are several constructions on categories.
One construction is the opposite (dual) category.
For any category $\mathcal{C}$, $\mathcal{C}^{\text{op}}$ is the dual category which consists of the same objects as $\mathcal{C}$ but with all the arrows reversed.
In other words, if $f : A \rightarrow B$ is an arrow in $\mathcal{C}$, then $f^{*} : B \rightarrow A$ is an arrow in $\mathcal{C}^{\text{op}}$.
Note that if $f : A \rightarrow B$ and $g : B \rightarrow C$ are any two arrows, then the dual of $g \circ f$ is $f^{*} \circ g^{*}$.
In category theory, if a statement is true in some category, then replacing each notion with its dual (e.g.\ reversing the arrows) in that statement results in a dual statement which is true in the opposite category.
For more on this topic, the reader should consult Awodey (2010)$\,^{\text{\cite{awodey}}}$ or Mac Lane, S. (1971)$\,^{\text{\cite{maclane}}}$.

Another common construction is the arrow category $\mathcal{C}^{\rightarrow}$ which has objects as arrows of $\mathcal{C}$ and arrows as commutative squares.
Thus, if $f:A \rightarrow B$ and $f':A' \rightarrow B'$ are any two arrows, then an arrow from $f$ to $f'$ is a pair of arrows $(g:A:\rightarrow A', g': B \rightarrow B')$ that makes the diagram below commute.

\begin{center}
	\begin{tikzcd}[transform shape, nodes={scale=1.2}]
		A \arrow[r, "f"] \arrow[d, "g"'] & B \arrow[d, "g'"] \\
		A' \arrow[r, "f'"']              & B'
	\end{tikzcd}
\end{center}

Note that the arrow $(g, g')$ which is a pair of arrows is not a function in the usual set theoretic sense.
The coslice category is also one of the common constructions which was already defined in section~\ref{sec:related-structures-in-the-categorytextbf} and is the dual notion of the slice category (see Awodey, 2010)$^{\text{\cite{awodey}}}$.

If we base category theory on some set theory, then the category is said to \textbf{small} if the collection of objects and the collection of arrows are sets.
Otherwise, it is said to be \textbf{large}.
A category $\mathcal{C}$ is said to be \textbf{locally small} if for all objects $X$ and $Y$ in $\mathcal{C}$, the collection $\text{Hom}_\mathcal{C}(X, Y) = \{ f\text{ is an arrow in } \mathcal{C} : dom(f) = X \wedge cod(f) = Y \}$ is a set.
As an example, the category $\textbf{Top}$ is locally small.
Moreover, $\text{Hom}_\mathcal{C}(X, Y)$ is called a ``Hom set''.

Now, we will discuss some category theoretic definitions which are characterizations of properties of objects and arrows in terms of other objects and arrows.
This method of characterization is usually regarded as ``external'' or structural as opposed to ``internal''.
A common way to do this is to introduce a universal mapping property.
For more motivation on these definitions and intuitive explanations, the reader should consult Awodey (2010)$\,^{\text{\cite{awodey}}}$.
In particular, understanding Awodey's ``no junk'' and ``no noise'' principles is very beneficial.

\begin{definition}
	In a category $\mathcal{C}$, an arrow $f : A \rightarrow B$ is said to be a monomorphism if for any two arrows $g : C \rightarrow A$ and $h : C \rightarrow A$ such that $f \circ g = f \circ h$, then $g = h$.
	\begin{center}
		\begin{tikzcd}
			C \arrow[r, "g", shift left=1ex] \arrow[r, "h"', shift right=1ex] & A \arrow[r, "f"] & B
		\end{tikzcd}
	\end{center}
\end{definition}

Monomorphisms in \textbf{Top} are injective continuous maps.

\begin{definition}
	In a category $\mathcal{C}$, an arrow $f : A \rightarrow B$ is said to be an epimorphism if for any two arrows $g : B \rightarrow C$ and $h : B \rightarrow C$ such that $g \circ f = h \circ f$, then $g = h$.
	\begin{center}
		\begin{tikzcd}
			A \arrow[r, "f"] & B \arrow[r, "g", shift left=1ex] \arrow[r, "h"', shift right=1ex] & C
		\end{tikzcd}
	\end{center}
\end{definition}

Epimorphisms in \textbf{Top} are surjective continuous maps.

\begin{definition}
	In a category $\mathcal{C}$, an object $0$ is said to be an initial object if for any object $C$, there is a unique morphism $0 \rightarrow C$.
	Moreover, an object $1$ is said to be terminal if for any object $C$, there is a unique morphism $C \rightarrow 1$.
\end{definition}

In \textbf{Top}, the initial object is the empty topological space, i.e.\ $(\phi, \{\phi\})$, and the terminal object is any topological space generated by a singleton, i.e.\ $(\{ a \}, \{\phi, \{ a \}\})$.

\begin{definition}
	In a category $\mathcal{C}$, the product of $A$ and $B$ consists of an object $P$ and two arrows $\pi_A : P \rightarrow A$ and $\pi_B : P \rightarrow B$, usually called projections, such that given any diagram of the form
	\begin{center}
		\begin{tikzcd}
			A & X \arrow[r, "p_B"] \arrow[l, "p_A"'] & B
		\end{tikzcd}
	\end{center}
	there is a unique map $u : X \rightarrow P$ that makes the following diagram commute.
	\begin{center}
		\begin{tikzcd}[transform shape, nodes={scale=1.2}]
			  & X \arrow[ld, "p_A"'] \arrow[rd, "p_B"] \arrow[d, "u", dashed] &   \\
			A & P \arrow[r, "\pi_B"] \arrow[l, "\pi_A"']                      & B
		\end{tikzcd}
	\end{center}
\end{definition}

In the category \textbf{Top}, the product of any two topological spaces $(X, \tau_X)$ and $(Y, \tau_Y)$ is the product topology on $X$ and $Y$.

\begin{definition}
	A category $\mathcal{C}$ is said to have all finite products, if it has a terminal object and all binary products (and thus all finite products).
\end{definition}

Indeed, the category \textbf{Top} has all finite products.


\begin{definition}
	In any category $\mathcal{C}$, given parallel arrows
	\begin{center}
		\begin{tikzcd}[transform shape, nodes={scale=1.2}]
			A \arrow[r, "f", shift left=1ex] \arrow[r, "g"', shift right=1ex] & B
		\end{tikzcd}
	\end{center}
	An equalizer of $f$ and $g$ consists of an object $E$ and an arrow $e : E \rightarrow A$, universal, such that $f \circ e = g \circ e$.
	That is, given any $z: Z \rightarrow A$ with $f \circ z = g \circ z$, there is a unique arrow $u : Z \rightarrow E$ such that the following diagram commutes.
	\begin{center}
		\begin{tikzcd}[transform shape, nodes={scale=1.2}]
			E \arrow[r, "e"]                          & A \arrow[r, "f", shift left=1ex] \arrow[r, "g"', shift right=1ex] & B \\
			Z \arrow[u, "u", dashed] \arrow[ru, "z"'] &   &
		\end{tikzcd}
	\end{center}
\end{definition}

One can look at the previous definition in the following manner.
Regarding sets, we want to find the largest set $E$ that makes the two functions $f$ and $g$ equal when they are restricted to that set.
So, given any other smaller set $Z$ that makes the two functions equal, the set $Z$ will be definitely contained in $E$.
Thus, one can look at the arrow $u : Z \rightarrow E$ as implying that the information in $Z$ is somehow contained in $E$.
This is a very convenient way to understand universal mapping properties.

In the category \textbf{Top}, equalizers are subspaces which make the continuous functions equal in the set theoretic sense.
A nice example of an equalizer is subspace inclusion.
Let $(X, \tau)$ be a topological space and $A \subseteq X$ be any subspace.
Let $(Y, \tau_{\text{ind}})$ be the topological space consisting of $Y = \{ 0, 1 \}$ with the indiscrete topology.
Define $f : X \rightarrow Y$ as $f(x) = 1$ for all $x$ in $X$.
Moreover, define $g : X \rightarrow Y$ as $g(x) = 1$ for all $x \in A$ and $g(x) = 0$ for all $x \in X \setminus A$.
The equalizer of $f$ and $g$ is the subspace $A$ together with the inclusion map $i : A \rightarrow X$, i.e.\ $i(a) = a$ for all $a$ in $A$.

Another way to characterize subspaces is by the following universal mapping property (see Manetti, 2015)$\,^{\text{\cite{manetti}}}$, as stated in the following theorem.


\begin{theorem}
	Let $(X, \tau_X)$ be a topological space, $A \subseteq X$ be any set, and ${ i: A \rightarrow X }$ be the inclusion map.
	The topological space on $A$ which is the subspace topology is characterized by the following universal mapping property.
	For every topological space $(Y, \tau_Y)$ and every function $f : Y \rightarrow A$, $f$ is continuous iff $i \circ f$ is continuous.
	\begin{center}
		\begin{tikzcd}[transform shape, nodes={scale=1.2}]
            			                             & X                 \\
			Y \arrow[r, "f"'] \arrow[ru, "i\circ f"] & A \arrow[u, "i"']
		\end{tikzcd}
	\end{center}
\end{theorem}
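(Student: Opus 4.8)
The plan is to prove the statement in two parts. First I would verify directly that the subspace topology $\tau_A = \{\, U \cap A : U \in \tau_X \,\}$ satisfies the stated universal mapping property. Then I would show that $\tau_A$ is the \emph{only} topology on $A$ with this property, which is the precise content of the phrase ``is characterized by''.

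For the first part, fix a space $(Y, \tau_Y)$ and a function $f : Y \rightarrow A$. With respect to $\tau_A$ the inclusion $i$ is continuous, since $i^{-1}(U) = U \cap A \in \tau_A$ for every $U \in \tau_X$; hence if $f$ is continuous, then $i \circ f$ is a composite of continuous maps and so is continuous. Conversely, suppose $i \circ f$ is continuous and let $V \in \tau_A$, say $V = U \cap A$ with $U \in \tau_X$. Then $f^{-1}(V) = f^{-1}(i^{-1}(U)) = (i \circ f)^{-1}(U)$ is open in $Y$, so $f$ is continuous. This gives the ``iff''.

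For the second part, let $\tau_A'$ be any topology on $A$ for which the analogous statement holds. I would feed the two candidate spaces themselves into the universal properties. Applying the property of $\tau_A'$ to $Y = (A, \tau_A)$ and $f = \mathrm{id}_A$: the map $i \circ \mathrm{id}_A = i : (A, \tau_A) \rightarrow X$ is continuous by the first part, so $\mathrm{id}_A : (A, \tau_A) \rightarrow (A, \tau_A')$ is continuous, i.e.\ $\tau_A' \subseteq \tau_A$. For the reverse inclusion, apply the property of $\tau_A'$ to $Y = (A, \tau_A')$ and $f = \mathrm{id}_A$ (which is trivially continuous) to conclude $i : (A, \tau_A') \rightarrow X$ is continuous; then the first part, applied with $Y = (A, \tau_A')$ and $f = \mathrm{id}_A$, shows $\mathrm{id}_A : (A, \tau_A') \rightarrow (A, \tau_A)$ is continuous, i.e.\ $\tau_A \subseteq \tau_A'$. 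Hence $\tau_A' = \tau_A$.

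I do not expect a genuine obstacle here, as the argument is routine; the only delicate point is the bookkeeping in the second part — one must remember to use each of the two topologies on $A$ in turn as the test object $Y$ together with $f = \mathrm{id}_A$, and keep track of which topology's universal property is being invoked at each step. Alternatively, one could package everything categorically: the pair $\bigl((A,\tau_A), i\bigr)$ is terminal among pairs $\bigl((Y,\tau_Y), h\bigr)$ where $h : Y \rightarrow X$ is continuous with image contained in $A$, and a terminal object is unique up to a unique isomorphism, which in $\textbf{Top}$ is a homeomorphism; since both $\tau_A$ and $\tau_A'$ live on the same set $A$ with the same inclusion $i$, this forces $\tau_A' = \tau_A$.
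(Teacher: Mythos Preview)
Your proof is correct and complete --- both the verification that the subspace topology satisfies the universal property and the uniqueness argument are standard and carried out cleanly. However, there is nothing to compare against: the paper states this theorem in the appendix without proof, merely citing Manetti (2015) as a reference, so your argument stands on its own rather than as an alternative to anything in the text.
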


Finally, we introduce the definition of pullbacks.

\begin{definition}
	In any category $\mathcal{C}$, given any arrows $f:A \rightarrow C$ and $g: B \rightarrow C$, the pullback of $f$ and $g$ consists of an object $P$ and arrows $p_A : P \rightarrow A$, $p_B : P \rightarrow B$ such that $f \circ p_A = g \circ p_B$ and universal with this property.
	That is, given any object $Z$ and arrows $z_A : Z \rightarrow A$, $z_B : Z \rightarrow B$ with $f \circ z_A = g \circ z_B$, there exists a unique arrow $u : Z \rightarrow P$ with $z_A = p_A \circ u$ and $z_B = p_B \circ u$ as illustrated in the following commutative diagram.
	\begin{center}
		\begin{tikzcd}
			Z \arrow[rdd, "z_A"', bend right] \arrow[rrd, "z_B", bend left] \arrow[rd, "u" description, dashed] &                                      &                  \\
																												& P \arrow[r, "p_B"] \arrow[d, "p_A"'] & B \arrow[d, "g"] \\
																												& A  \arrow[r, "f"']                   & C
		\end{tikzcd}
	\end{center}
\end{definition}

According to Awodey (2010)$\,^{\text{\cite{awodey}}}$, the pullback could be equivalently defined as follows.
We take the product $\Pi$ of both $A$ and $B$, then the equalizer $P$ of $f \circ \pi_A$ and $g \circ \pi_B$ where $\pi_A$ and $\pi_B$ are the projection maps of $\Pi$.
The equalizer $P$ is the pullback of $f$ and $g$.

The final point we will discuss regarding category theory is the notion of a natural transformation.
As we said before, a natural transformation is a mapping from one functor to another which respects the structures of the categories involved.
The formal definition would read as follows.


\begin{definition}
	For categories $\mathcal{C}$ and $\mathcal{D}$, let $F : \mathcal{C} \rightarrow \mathcal{D}$ and $G : \mathcal{C} \rightarrow \mathcal{D}$ be any two functors.
	A natural transformation $\upsilon : F \rightarrow G$ is a family of arrows in $\mathcal{D}$ of the following form
	\[
		\upsilon_C : F(C) \rightarrow G(C)
	\]
	,for each $C \in \mathcal{C}$, such that for any arrow $f : C \rightarrow C\,'$ in $\mathcal{C}$, it holds that
	\[
		\upsilon_{C\,'} \circ F(f) = G(f) \circ \upsilon_{C}
	\]
	, i.e\ the following diagram commutes in $\mathcal{D}$

	\begin{center}
		\begin{tikzcd}[transform shape, nodes={scale=1.2}]
			F(C) \arrow[r, "\upsilon_C"] \arrow[d, "F(f)"'] & G(C) \arrow[d, "G(f)"] \\
			F(C\,') \arrow[r, "\upsilon_{C\,'}"']               & G(C\,')
		\end{tikzcd}
	\end{center}
\end{definition}

For any two categories $\mathcal{C}$ and $\mathcal{D}$, the functor category $F(\mathcal{C}, \mathcal{D})$ has objects as functors $F : \mathcal{C} \rightarrow \mathcal{D}$ and arrows as natural transformations between these functors $\upsilon: F \rightarrow G$.
Definitely, there is an identity natural transformation $1_F : F \rightarrow F$ for each functor $F$.
Finally, a natural isomorphism is an isomorphism in the functor category.

\end{document}